\newtheorem{theo}{Theorem}[section]
\newtheorem{prop}[theo]{Proposition}
\newtheorem{lemm}[theo]{Lemma}
\newtheorem{coro}[theo]{Corollary}
\newtheorem{rema}[theo]{Remark}
\newtheorem{Definition}[theo]{Definition}
\title{On the universal ${\rm CH}_0$ group of  cubic hypersurfaces}
\author{Claire Voisin
\\CNRS, Institut de Math\'ematiques de Jussieu}
\date{}
\newfont{\gothic}{eufb10}
\begin{document}
\maketitle

\begin{abstract} We study the existence of a Chow-theoretic decomposition of the diagonal of a smooth cubic hypersurface,
or equivalently, the universal triviality of its ${\rm CH}_0$-group.
We prove that for  odd dimensional cubic hypersurfaces or for cubic
fourfolds, this is equivalent to the existence of a cohomological
decomposition of the diagonal, and we translate geometrically this
last condition. For cubic threefolds $X$, this turns out to be
equivalent to the algebraicity of the minimal class $\theta^4/4!$ of
the intermediate Jacobian $J(X)$. In dimension $4$, we show that a
special cubic fourfold with discriminant not divisible by $4$ has
universally trivial ${\rm CH}_0$ group.

 \end{abstract}
\section{Introduction}
Let $X$ be a smooth rationally connected projective variety over
$\mathbb{C}$. Then ${\rm CH}_0(X)=\mathbb{Z}$, as all points of $X$
are rationally equivalent. However, if $L$ is a field containing
$\mathbb{C}$, e.g. a function field, the group ${\rm CH}_0(X_L)$ can
be different from $\mathbb{Z}$. As explained in \cite{ACTP}, the
group ${\rm CH}_0(X_L)$ is equal to $\mathbb{Z}$ for any field $L$
containing $\mathbb{C}$ if and only if, for $L=\mathbb{C}(X)$, the
diagonal (or generic) point $\delta_L$ is rationally equivalent over
$L$ to a constant point $x_L$  for some (in fact any) point $x\in
X(\mathbb{C})$. Following \cite{ACTP}, we will then say that $X$ has
universally trivial ${\rm CH}_0$ group. Observe that, on the other
hand, the equality
$$\delta_L=x_L\,\,{\rm in}\,\,{\rm CH}_0(X_L)={\rm CH}^n(X_L),\,\,n={\rm dim}\,X$$
is, by the localization exact sequence applied to Zariski open sets
of $X\times X$ of the form $U\times X$, equivalent to the vanishing
in ${\rm CH}^n(U\times X)$ of the restriction of $\Delta_X-X\times
x$, where $U$ is a sufficiently small dense Zariski open set of $X$
and $\Delta_X\subset X\times X$ is the diagonal of $X$. This
provides  a Bloch-Srinivas decomposition of the diagonal
\begin{eqnarray}
\label{eqchodec}\Delta_X=X\times x+Z\,\,{\rm in}\,\,{\rm CH}^n(X\times X),
\end{eqnarray}
where $Z$ is supported on $D\times X$ for some proper closed subset
$D$ of $X$. As in \cite{voisinjag}, we will call an equality
(\ref{eqchodec}) a {\it Chow-theoretic decomposition of the
diagonal}. So, having universally trivial ${\rm CH}_0$ group is
equivalent to admitting a Chow-theoretic decomposition of the
diagonal, but the second viewpoint is much more geometric, and leads
to the study of weakened properties, like the existence of  a {\it
cohomological decomposition of the diagonal}, which is the
cohomological counterpart of (\ref{eqchodec}),  studied for
threefolds in \cite{voisinjag} :
\begin{eqnarray}
\label{eqcohdec}[\Delta_X]=[X\times x]+[Z]\,\,{\rm in}\,\,H^{2n}(X\times X,\mathbb{Z}),
\end{eqnarray}
where $Z$ is supported on $D\times X$ for some proper closed
algebraic subset $D$ of $X$. In these notions, integral coefficients
are essential in order to make the property restrictive, as the
existence of decompositions as above with rational coefficients
already follows from the assumption that ${\rm CH}_0(X)=\mathbb{Z}$
(see \cite{blochsrinivas}).

 Projective space has
universally trivial ${\rm CH}_0$ group. It follows that rational or
stably rational varieties admit
 a
Chow-theoretic (and a fortiori cohomological) decomposition of the
diagonal. More generally, if $X$ is a unirational variety admitting
a unirational parametrization $\mathbb{P}^n\dashrightarrow X$ of
degree $N$, then there is a decomposition
$$N\Delta_X=N (X\times x)+Z\,\,{\rm in}\,\,{\rm CH}^n(X\times X),$$
with $Z$ supported on $D\times X$ for some $D\subsetneqq X$.

On the other hand, the existence of a decomposition of the diagonal
is certainly not a sufficient condition for stable rationality, as
there are surfaces of general type (hence very far from being
rational or stably rational) which admit a Chow-theoretic
decomposition of the diagonal (see Corollary \ref{coro26juill}). It
could be also the case that a smooth projective variety $X$ admits
unirational parametrizations of coprime degrees $N_i$ without being
stably rational (although we do not know such examples).
Nevertheless, the existence of a decomposition of the diagonal is a
rather strong condition, and there are now a number of unirational
examples where the non-existence provides an obstruction to
rationality or stable rationality:

1) Examples of rationally connected  varieties with no cohomological
decomposition of the diagonal include varieties with non-trivial
Artin-Mumford invariant (this is the torsion in $H^3(X,\mathbb{Z})$
or the second unramified cohomology group with torsion
coefficients), or  non-trivial third unramified cohomology group
$H^3_{nr}(X,\mathbb{Q}/\mathbb{Z})$
 (see  \cite{ctoj}  for examples), as  both of these groups have
 to be $0$ when $X$ has a cohomological decomposition of
the diagonal, see \cite{voisinjag}).

2) Examples of rationally connected varieties with no Chow-theoretic
decomposition of the diagonal include varieties with non-trivial
unramified cohomology groups $H^i_{nr}(X,\mathbb{Q}/\mathbb{Z})$
with $i\geq4$, as these groups have
 to be $0$ when $X$ has a Chow-theoretic decomposition of
the diagonal, see \cite{ctv}. We refer to \cite{peyre} for such
examples and to \cite{voisindeg4unra} for the cycle-theoretic
interpretation of this group in degree $4$.

 3) Furthermore,  we proved in \cite{voisindoublesolid}  that
the non-existence of a decomposition of the diagonal is  a criterion
for (stable) irrationality which is actually stronger than those
given by the nontriviality of unramified cohomology: For example, we
show in {\it loc. cit.} that very general smooth quartic double
solids do not admit a Chow-theoretic or even a cohomological
decomposition of the diagonal while their unramified cohomology
vanishes in all positive degrees. We prove similar results for  very
general nodal quartic double solids with $k\leq 7$ nodes and in the
case of very general double solids with  exactly $7$ nodes, we prove
in \cite{voisindoublesolid} that the non-existence of a
cohomological decomposition of the diagonal is equivalent to the
non-existence of a universal codimension $2$ cycle $Z\in {\rm
CH}^2(J(X)\times X)$, where $J(X)$ is the intermediate Jacobian of
$X$. ($J(X)$  is also known to be isomorphic to the group ${\rm
CH}^2(X)_{hom}$ of codimension $2$ cycles homologous to $0$ on $X$.)
Thus, in this case, the study of the decomposition of the diagonal
led to the discovery of new stable birational invariants which are
nontrivial for some unirational varieties.

The purpose of this paper is to investigate the existence of a
decomposition of the diagonal  for cubic hypersurfaces. Motivations
for this study are the following problems:

 1) It is well-known that $3$-dimensional
smooth cubics are irrational (see \cite{clemensgriffiths}), but they
are not known not to be  stably rational.

2) In dimension $4$, some cubics are known to be rational and it is
a famous open problem to prove that there exist irrational cubic
fourfolds. Some precise conjectures concerning the rationality of
cubic fourfolds have been formulated and compared (see
\cite{hassett}, \cite{kuznetsov}, \cite{thoadd}, \cite{addi}). All
these conjectures develop the idea that if a cubic fourfold is
rational, it is related in some way (Hodge-theoretic, categorical)
to a $K3$ surface. An interesting computation has been made recently
by Galkin and Shinder \cite{galkin}, who prove that  a rational
cubic fourfold has to satisfy the property that its variety of lines
is birational to ${\rm Hilb}^2(S)$ for some $K3$ surface $S$, unless
a certain explicitly constructed nonzero element in the Grothendieck
ring of complex varieties is annihilated by the class of
$\mathbb{A}^1$, that is, provides a counterexample to the
cancellation conjecture for the Grothendieck ring. (Note that such
counterexamples are now known to exist by the work of Borisov
\cite{borisov}.)

In general,  the existence of a cohomological decomposition is  much
weaker than the existence of a Chow-theoretic one. Our first result,
which is unconditional for cubic fourfolds and for odd-dimensional
cubics, is the following :
\begin{theo}\label{theoeqchcohdec} Let $X$ be a smooth cubic
hypersurface. Assume that $H^*(X,\mathbb{Z})/H^*(X,\mathbb{Z}
)_{alg}$ has no $2$-torsion (this holds for example if ${\rm
dim}\,X$  is odd  or $ {\rm dim}\,X\leq 4$, or $X$ is very general
of any dimension).
 Then $X$ admits a Chow-theoretic decomposition
of the diagonal (equivalently, ${\rm CH}_0(X)$ is universally
trivial) if and only if it admits a cohomological decomposition of
the diagonal.
\end{theo}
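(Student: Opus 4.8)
The forward direction is trivial (a Chow-theoretic decomposition maps to a cohomological one under the cycle class map), so the content is the converse: assuming a cohomological decomposition of the diagonal, produce a Chow-theoretic one. I would follow the Bloch–Srinivas philosophy. First, the hypothesis $\mathrm{CH}_0(X)=\mathbb{Z}$ (true since a smooth cubic hypersurface is rationally connected, being Fano) already gives, by Bloch–Srinivas, a decomposition of the diagonal with \emph{rational} coefficients:
$$
N\Delta_X = N\,(X\times x) + Z \quad\text{in }\mathrm{CH}^n(X\times X)
$$
for some integer $N>0$, with $Z$ supported on $D\times X$. The issue is to descend from $N$ to $1$.

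**Reduction to a torsion statement.** The plan is to show that the obstruction to having an \emph{integral} Chow-theoretic decomposition, given that we have an integral cohomological one, is a torsion class, and then to analyze that torsion. Concretely, write $\Gamma := \Delta_X - X\times x - Z$, where now $Z$ is the integral cycle supported on $D\times X$ coming from the assumed cohomological decomposition, so $[\Gamma]=0$ in $H^{2n}(X\times X,\mathbb{Z})$. One restricts $\Gamma$ to $U\times X$ for $U=X\setminus D$; one wants this restriction to vanish in $\mathrm{CH}^n(U\times X)$. Using the localization sequence and the structure of $\mathrm{CH}^n$ of $U\times X$ — together with the fact that $\Gamma$ acts as zero on $\mathrm{CH}_0$ and on cohomology — I expect $\Gamma_{|U\times X}$ to be controlled by the action of $\Gamma$ on lower-dimensional Chow groups or on the Griffiths group, and ultimately by a finite group. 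The key input here is a decomposition-of-the-diagonal / correspondence argument showing that a cycle acting trivially on $\mathrm{CH}_0$ and cohomologically trivial is, after restriction to a suitable open set, torsion; and then that the relevant torsion group embeds into (a quotient of) $H^*(X,\mathbb{Z})/H^*(X,\mathbb{Z})_{alg}$.

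**Where the hypothesis enters.** This is exactly where the ``no $2$-torsion'' hypothesis is used: the coprimality trick. A smooth cubic hypersurface $X\subset\mathbb{P}^{n+1}$ carries a degree-$6$ self-rational map (or more elementarily, projection from a line realizes $X$ as having a correspondence of degree $2$ with $\mathbb{P}^{n-1}$, and the full variety of lines gives further structure), so there is a unirational parametrization — more relevantly, there are correspondences on $X$ whose degrees are powers of $2$. Combined with the rational decomposition above (which after clearing denominators can be taken with $N$ a power of a prime, or one runs the argument prime by prime), one gets: for every prime $p\neq 2$, the class $\Gamma$ restricted to $U\times X$ is $p$-divisible modulo something algebraic, forcing the obstruction to be a $2$-group; and the degree-$2$-power correspondences coming from the cubic structure kill the $2$-part. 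The remaining genuinely $2$-torsion obstruction lives in $H^*(X,\mathbb{Z})/H^*(X,\mathbb{Z})_{alg}$, which we assumed has none. Thus $\Gamma_{|U\times X}=0$ after possibly shrinking $U$, giving (\ref{eqchodec}).

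**Main obstacle.** The hard part will be the middle step: making precise the claim that cohomological triviality of $\Gamma$ plus triviality of its action on $\mathrm{CH}_0$ forces $\Gamma_{|U\times X}$ to be torsion of a controlled type, and identifying that torsion group with a subquotient of $H^*(X,\mathbb{Z})/H^*(X,\mathbb{Z})_{alg}$. This requires a careful decomposition-of-the-diagonal argument à la Bloch–Srinivas run with integral coefficients, keeping track of the Künneth components of $\Gamma$ and using that, for a cubic hypersurface, $H^*(X,\mathbb{Z})$ is torsion-free (by Lefschetz hyperplane theorem for the low cohomology and Poincaré duality) so that the only obstructions to algebraicity are the transcendental-versus-algebraic discrepancy, i.e. exactly $H^*(X,\mathbb{Z})/H^*(X,\mathbb{Z})_{alg}$. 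The parenthetical cases in the statement (${\rm dim}\,X$ odd, ${\rm dim}\,X\le 4$, or $X$ very general) should follow by checking that for those $X$ this quotient has no $2$-torsion — for odd-dimensional cubics the only interesting middle cohomology is odd-degree so $H^*(X,\mathbb{Z})_{alg}$ issues reduce to the intermediate Jacobian and there is no torsion; for $\dim X\le 4$ the Hodge conjecture-type statements and explicit descriptions of the lattices apply; very general case is a genericity statement on Noether–Lefschetz loci.
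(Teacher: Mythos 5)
Your reduction is fine as far as it goes: the forward direction is indeed trivial, and with $\Gamma:=\Delta_X-X\times x-Z$ cohomologically trivial, the restriction $\Gamma_{|U\times X}$ is torsion --- in fact it is $2$-torsion, because the degree $2$ unirational parametrization of a cubic already gives that $2(\Delta_X-X\times x)$ is rationally equivalent to a cycle supported on $D\times X$; so the prime-by-prime ``$p$-divisibility'' claim is both unjustified and unnecessary. The genuine gap is the step you yourself flag as the main obstacle: there is no known mechanism by which a cohomologically trivial correspondence acting trivially on ${\rm CH}_0$ has its class in ${\rm CH}^n(U\times X)$ controlled by $H^*(X,\mathbb{Z})/H^*(X,\mathbb{Z})_{alg}$. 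Torsion in codimension $n$ Chow groups of (opens of) $X\times X$ is not governed by the defect of algebraicity of integral cohomology classes on $X$, and if such a principle held it would prove the equivalence of cohomological and Chow-theoretic decompositions for essentially arbitrary rationally connected varieties, which is exactly what is not known. So the hypothesis ``no $2$-torsion in $H^*(X,\mathbb{Z})/H^*(X,\mathbb{Z})_{alg}$'' cannot be inserted where you put it, and your outline never uses any property special to cubic hypersurfaces beyond the degree $2$ parametrization.

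The actual proof relies on cubic-specific geometry absent from your sketch. One first symmetrizes the cohomological decomposition (Lemma \ref{lesym}) and writes the symmetric cycle as $\mu^*\Gamma_0$ for $\Gamma_0\in{\rm CH}^n(X^{[2]})$ (Lemma \ref{le1}, Corollary \ref{corosympull}); the $2$-torsion hypothesis enters precisely in Proposition \ref{leGamma}, where one corrects $\Gamma_0$ so that $[\Gamma_0]=0$ in $H^{2n}(X^{[2]},\mathbb{Z})$ --- this rests on a divisibility-by-$2$ analysis of pushforwards from the exceptional divisor $E_\Delta\cong\mathbb{P}(T_X)$ (Lemmas \ref{lecox2projspace} and \ref{lecox2CI}) and on Totaro's theorem that $H^*(X^{[2]},\mathbb{Z})$ has no $2$-torsion, not on any statement about torsion in Chow groups. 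The key geometric input is then Proposition \ref{propX2}: $X^{[2]}$ is birationally a $\mathbb{P}^n$-bundle $P_X$ over $X$ (send two points to the residual point of their line), which allows one to transport the cohomologically trivial $\Gamma_0$ to $P_X$ and handle it there: for ${\rm dim}\,X\leq 4$ cycles cohomologous to zero are algebraically equivalent to zero and one concludes with the nilpotence theorem via Proposition \ref{propredalg}; in general one shows that $3$ times the relevant pullback is supported on $D\times X$ (Lemma \ref{leP2} and the lemma on $3\mu^*\Phi^*(Z)$) and combines this with the degree $2$ parametrization, using $\gcd(2,3)=1$. None of these steps can be recovered from your outline, so the proposal does not amount to a proof.
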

Here $H^*(X,\mathbb{Z} )_{alg}\subset H^*(X,\mathbb{Z})$ is the subgroup of classes of algebraic cycles.
 For any odd degree and odd dimension smooth hypersurface in projective space, the
 quotient group
$H^*(X,\mathbb{Z})/H^*(X,\mathbb{Z} )_{alg}$ has no
$2$-torsion. For cubic hypersurfaces, the first example where we do not know if the assumption is satisfied is $6$-dimensional cubics and degree $6$ integral cohomology classes on them.

 The proof of Theorem \ref{theoeqchcohdec} uses
the fact that ${\rm Hilb}^2(X)$ is birationally a projective bundle
over $X$, a property which is also
 crucially used in the recent  paper
\cite{galkin}.

One consequence of this result, established in Section \ref{sec5},
concerns the following notion:
\begin{Definition}\label{defiuniv} (i) If $Y\subset X$ is a closed algebraic subset of a variety defined over a field $K$,
we say that ${\rm CH}_0(Y)\rightarrow {\rm CH}_0(X) $ is universally
surjective if ${\rm CH}_0(Y_L)\rightarrow {\rm CH}_0(X_L) $ is
surjective for any field $L$ containing $K$.

(ii) The essential ${\rm CH}_0$-dimension of a variety $X$ is the
minimal integer $ k$ such that there exists a closed algebraic
subset $Y\subset X$ of dimension $k$, such that ${\rm
CH}_0(Y)\rightarrow {\rm CH}_0(X)$ is universally surjective.
\end{Definition}
\begin{theo}\label{theonewintro}
The essential ${\rm CH}_0$-dimension of a very general
$n$-dimensional cubic  hypersurface over $\mathbb{C}$
 is either $n$ or $0$.
\end{theo}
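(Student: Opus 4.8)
The plan is to exploit a spreading-out / specialization argument together with Theorem \ref{theoeqchcohdec}, showing that the essential ${\rm CH}_0$-dimension cannot take any intermediate value between $0$ and $n$ for a very general cubic. First I would set up the universal family $\mathcal{X}\to B$ of smooth $n$-dimensional cubic hypersurfaces over an open set $B$ of the projective space parametrizing cubic forms, and argue that the property ``there exists a closed subset $Y\subset X$ of dimension $\leq k$ with ${\rm CH}_0(Y)\to{\rm CH}_0(X)$ universally surjective'' is a countable union of closed conditions on $B$: one uses relative Hilbert schemes to parametrize the possible $Y$'s, and the universal surjectivity is detected by a decomposition statement (the generic point of $\mathcal{X}$ over the function field is supported on $Y$ after base change) which spreads out. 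Hence either the very general member has essential ${\rm CH}_0$-dimension $\geq k+1$, or every member in a dense family — and by the countable-union structure the very general member — has essential ${\rm CH}_0$-dimension $\leq k$.

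The heart of the argument is then to rule out the value $k$ with $1\leq k\leq n-1$ for the very general $X$. Suppose ${\rm CH}_0(Y)\to {\rm CH}_0(X)$ is universally surjective with $\dim Y=k\leq n-1$. I would first reduce to the case where $Y$ is replaced by a linear section: since $X$ is covered by lines, and through a general point of $X$ there pass lines meeting $Y$, one can try to propagate rational equivalence along these lines to conclude that ${\rm CH}_0$ of a hyperplane section $X\cap H$ already surjects onto ${\rm CH}_0(X)$ universally — this is the standard ``the essential dimension drops to $n-1$ forces a decomposition of the diagonal'' mechanism. Concretely, universal surjectivity with $\dim Y\leq n-1$ gives, over $L=\mathbb{C}(X)$, an equality $\delta_L = z_L$ in ${\rm CH}_0(X_L)$ with $z_L$ supported on $Y_L$; spreading out yields a Chow-theoretic decomposition of the diagonal $\Delta_X = Z_1 + Z_2$ in ${\rm CH}^n(X\times X)$ with $Z_1$ supported on $D\times X$ and $Z_2$ supported on $X\times Y$. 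The point is that such a ``two-sided'' decomposition, for a rationally connected $X$ and $\dim Y\leq n-1$, forces the ordinary Chow-theoretic decomposition of the diagonal (\ref{eqchodec}): one moves the $X\times Y$ term off the diagonal using that $Y$ has positive codimension and $X$ is covered by rational curves, so ${\rm CH}_0(X)=\mathbb{Z}$ lets one write the $Z_2$ part as $X\times x + (\text{something supported on }D'\times X)$.

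Once $X$ is shown to admit a Chow-theoretic decomposition of the diagonal, Theorem \ref{theoeqchcohdec} is not even needed in this direction, but the converse direction of the dichotomy is where it enters: conversely, if $X$ admits a Chow-theoretic decomposition of the diagonal then ${\rm CH}_0(X)$ is universally trivial, so ${\rm CH}_0(\{x\})\to {\rm CH}_0(X)$ is universally surjective and the essential ${\rm CH}_0$-dimension is $0$. Therefore for the very general cubic the essential ${\rm CH}_0$-dimension is either $n$ (no decomposition) or $0$ (decomposition exists), with nothing in between. I expect the main obstacle to be the propagation step: showing rigorously that universal surjectivity from a subvariety of dimension $\leq n-1$ collapses all the way to a genuine decomposition of the diagonal, i.e. controlling the ``$X\times Y$''-supported correction term and moving it into a ``$D\times X$''-supported term. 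This requires care with the localization sequences on $X\times X$ and the fact that the decomposition must hold integrally — exactly the setting where Theorem \ref{theoeqchcohdec} and the rational-coefficients results of Bloch–Srinivas must be combined, and where the hypothesis that $H^*(X,\mathbb{Z})/H^*(X,\mathbb{Z})_{alg}$ has no $2$-torsion (automatic here since $X$ is very general) is used.
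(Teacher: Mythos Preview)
Your setup is correct: universal surjectivity of ${\rm CH}_0(Y)\to{\rm CH}_0(X)$ with $\dim Y\leq n-1$ does give a two-sided decomposition $\Delta_X=Z_1+Z_2$ in ${\rm CH}^n(X\times X)$ with $Z_1$ supported on $D\times X$ and $Z_2$ on $X\times Y$, exactly as the paper obtains in (\ref{eqdecompcoaecY}). But your ``propagation step'' is the gap. The assertion that ``${\rm CH}_0(X)=\mathbb{Z}$ lets one write the $Z_2$ part as $X\times x+(\text{something supported on }D'\times X)$'' is not justified and in general false: $Z_2$ is a codimension-$n$ cycle on $X\times X$, and the triviality of ${\rm CH}_0$ of the fibers $\{p\}\times X$ does not assemble into a global statement in ${\rm CH}^n(X\times X)$. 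If such a move were available for rationally connected $X$, every rationally connected variety whose ${\rm CH}_0$ is universally supported on a divisor would have universally trivial ${\rm CH}_0$, which is far stronger than what is proved and would make the ``very general'' hypothesis irrelevant beyond the $2$-torsion condition.

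What the paper actually uses at this point is a Hodge-theoretic input you have not invoked: for very general $X$ one has ${\rm End}_{HS}(H^n(X,\mathbb{Q})_{prim})=\mathbb{Q}\,Id$ (Lemma~\ref{le27juin}, via the monodromy/Mumford--Tate group). Passing to cohomology in the two-sided decomposition, each $[Z_i]^*$ is an endomorphism of the Hodge structure on $H^n(X,\mathbb{Z})_{prim}$, hence $[Z_1]^*=m_1\,Id$ and $[Z_2]^*=m_2\,Id$ with $m_1+m_2=1$; one of the $m_i$ is odd. Combining this with the degree-$2$ unirational parametrization (which decomposes $2[\Delta_X]$) and the fact that $h^i$ is algebraic, Lemma~\ref{newlemma27juin} produces a genuine cohomological decomposition of $[\Delta_X]$; Theorem~\ref{theoeqchcohdec} then upgrades it to a Chow-theoretic one. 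So the ``very general'' hypothesis enters twice --- once for the $2$-torsion condition you noted, and once, crucially, for the simplicity of the Hodge structure --- and the latter is what replaces your attempted geometric move of $Z_2$.
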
 More precisely,  Theorem \ref{theochowinterdim}
proves the result above for a smooth cubic hypersurface of dimension
$n$ with no $2$-torsion in $H^{n}(X,\mathbb{Z})/
H^n(X,\mathbb{Z})_{alg}$ and such that ${\rm
End}_{HS}H^n(X,\mathbb{Q})_{prim}=\mathbb{Q}$. In dimension $4$, we
get further precise consequences, for example we prove that a cubic
fourfold which is special in the sense of Hassett \cite{hassett},
with discriminant not divisible by $4$, has universally  trivial
${\rm CH}_0$ group.

 The rest of the paper
focuses on the cohomological decomposition of the diagonal. We first
investigate the existence of a cohomological decomposition of the
diagonal for varieties whose non-algebraic cohomology is supported
in middle degree, like complete intersections. We prove the
following result:
\begin{theo} \label{theocohdecomp} Let $X$ be a smooth projective variety such that
$H^*(X,\mathbb{Z})$ has no torsion. Assume that
 $H^{2i}(X,\mathbb{Z})$ is generated over $\mathbb{Z}$ by algebraic cycles
 for $2i\not=n={\rm dim}\,X$.
 Then $X$ admits
a cohomological decomposition of the diagonal if and only if there
exist varieties $Z_i$ of dimension $n-2$,  correspondences
$\Gamma_i\in {\rm CH}^{n-1}(Z_i\times X)$, and integers $n_i$ with
the property that for $\alpha,\,\beta\in H^n(X,\mathbb{Z})$,
\begin{eqnarray}
\label{eqcond} \sum_in_i\langle
\Gamma_i^*\alpha,\Gamma_i^*\beta\rangle_{Z_i}=\langle
\alpha,\beta\rangle_X.
\end{eqnarray}
\end{theo}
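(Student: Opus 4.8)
The plan is to start from the cohomological decomposition of the diagonal
$[\Delta_X]=[X\times x]+[Z]$ in $H^{2n}(X\times X,\mathbb{Z})$, with $Z$ supported on $D\times X$ for a proper closed algebraic subset $D\subsetneqq X$, and to unravel what this says on $H^n(X,\mathbb{Z})$. Acting on cohomology by the Künneth components, the class $[X\times x]$ contributes nothing on $H^n$ (it kills $H^n$ for $n>0$), so the class $[Z]$ must act as the identity on $H^n(X,\mathbb{Z})$. Since $H^*(X,\mathbb{Z})$ is torsion-free and $H^{2i}(X,\mathbb{Z})$ is generated by algebraic classes for $2i\neq n$, the Künneth components of $[Z]$ in bidegrees $(2i,2n-2i)$ with $2i\neq n$ are classes of algebraic cycles, hence can be absorbed into the "$Z$ supported on $D\times X$" term (or the $X\times x$ term) after adjusting; the only essential part is the middle Künneth component $[Z]^{n,n}\in H^n(X,\mathbb{Z})\otimes H^n(X,\mathbb{Z})$, and this component equals the class of the identity correspondence on $H^n$, i.e. (via Poincaré duality) the diagonal class in $H^n(X)\otimes H^n(X)$. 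Now $Z$ being supported on $D\times X$ means, after resolving singularities, that $[Z]^{n,n}$ is the pushforward of a class coming from a variety $\widetilde D$ of dimension $\leq n-1$; but a cycle supported on $D\times X$ with $D$ of dimension $\leq n-1$ contributes to $H^{n,n}$ only through classes pulled back from a variety of dimension $n-1$, and after taking hyperplane sections one reduces to correspondences $\Gamma_i\in {\rm CH}^{n-1}(Z_i\times X)$ with $\dim Z_i=n-2$, with appropriate multiplicities $n_i$. Writing out that the resulting class in $H^n(X)\otimes H^n(X)$ equals the diagonal is exactly the statement that for all $\alpha,\beta\in H^n(X,\mathbb{Z})$ one has $\sum_i n_i\langle\Gamma_i^*\alpha,\Gamma_i^*\beta\rangle_{Z_i}=\langle\alpha,\beta\rangle_X$, which is \eqref{eqcond}. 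This proves the "only if" direction.

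For the converse, suppose we are given $Z_i$, $\Gamma_i$, $n_i$ with \eqref{eqcond}. Consider the correspondence $W:=\sum_i n_i\,{}^t\Gamma_i\circ\Gamma_i'\in {\rm CH}^n(X\times X)$, where $\Gamma_i'$ is obtained from $\Gamma_i$ by a suitable hyperplane-section adjustment so that it has the right codimension to compose; concretely, one builds from each $\Gamma_i$ a self-correspondence of $X$ factoring through $Z_i$, hence supported on $p_i(Z_i\times X)\times X$ — a subset of the form $D\times X$ with $\dim D\leq n-2<n$. Condition \eqref{eqcond} says precisely that the middle Künneth component $[W]^{n,n}$ of $[W]$ coincides with the diagonal class in $H^n(X)\otimes H^n(X)$. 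It remains to correct the other Künneth components: the class $[\Delta_X]-[W]$ then has trivial $(n,n)$-component, and all its other components, living in $H^{2i}(X)\otimes H^{2n-2i}(X)$ with $2i\neq n$, are combinations of products of algebraic classes by the hypothesis that $H^{2i}(X,\mathbb{Z})$ is algebraically generated for $2i\neq n$ (here torsion-freeness guarantees that the Künneth decomposition is integral and that "algebraically generated" behaves well integrally). Hence $[\Delta_X]-[W]$ is the class of an algebraic cycle $Y$ supported away from the middle — more precisely one writes $[\Delta_X]=[W]+[X\times x]+(\text{classes of the form }[\text{fibre-type cycles}])$, and all the extra algebraic pieces are supported either on $X\times(\text{point or subvariety})$ or on $(\text{proper subvariety})\times X$; using the standard trick (as in the Bloch–Srinivas argument) of moving the "$X\times$(lower-dimensional)" pieces, which contribute to ${\rm CH}_0$ of the generic fibre trivially, one rearranges everything into the shape $[\Delta_X]=[X\times x]+[Z']$ with $Z'$ supported on $D'\times X$, $D'\subsetneqq X$. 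This gives the cohomological decomposition of the diagonal.

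The main obstacle, and the step I expect to require the most care, is the bookkeeping of the \emph{non-middle} Künneth components and the passage between "supported on $D\times X$ with $\dim D\leq n-1$" and the clean formulation with $\dim Z_i=n-2$ and correspondences in ${\rm CH}^{n-1}(Z_i\times X)$: one needs that a class supported on $D\times X$ can, after replacing $D$ by a resolution and intersecting with a hyperplane, be represented by data of the stated dimensions, and that the torsion-freeness hypothesis makes all these manipulations valid over $\mathbb{Z}$ rather than just over $\mathbb{Q}$. In particular, one must be careful that the algebraic classes used to kill the off-middle Künneth components are genuinely integral classes of cycles (not just $\mathbb{Q}$-classes), which is exactly where the hypotheses "$H^*(X,\mathbb{Z})$ torsion-free" and "$H^{2i}(X,\mathbb{Z})$ generated by algebraic cycles for $2i\neq n$" are used; and that the diagonal $\Delta_X$ itself, restricted to the off-middle part, is algebraic (it is, being $\sum$ of products of the Künneth projectors, each of which is algebraic under our hypotheses except the middle one). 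Once this reduction is in place, the equivalence with \eqref{eqcond} is essentially a translation through Poincaré duality of "the middle Künneth projector is represented by a cycle factoring through varieties of dimension $n-2$."
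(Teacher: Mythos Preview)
Your converse (``if'') direction is essentially the paper's argument: build a self-correspondence of $X$ from the $\Gamma_i$'s (the paper uses $\Gamma:=\sum_i n_i(\Gamma_i,\Gamma_i)_*\Delta_{Z_i}$, which is a clean way to write what you call $\sum_i n_i\,{}^t\Gamma_i\circ\Gamma_i'$), check that $[\Delta_X]-[\Gamma]$ kills $H^n$, and then use torsion-freeness and algebraicity of $H^{2i}$ for $2i\neq n$ to write the remaining K\"unneth pieces as classes of product cycles. That part is fine.

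The ``only if'' direction, however, has a real gap at exactly the point you flag as the main obstacle, and your proposed fix (``after taking hyperplane sections one reduces to $\dim Z_i=n-2$'') does not work. The $(n,n)$ K\"unneth component of $[Z]$ is a bilinear form, and what you must produce is a \emph{sum of quadratic forms} $\sum_i n_i\langle\Gamma_i^*\alpha,\Gamma_i^*\beta\rangle_{Z_i}$, i.e.\ with the same $\Gamma_i$ on both slots. Simply slicing $D$ by a hyperplane loses information and does not give this structure. The paper's mechanism is different and involves three ingredients you are missing:
\begin{enumerate}
\item \textbf{Squaring.} Since $[Z]^*=\mathrm{id}$ on $H^n$, one writes
$\langle\alpha,\beta\rangle_X=\langle [Z]^*\alpha,[Z]^*\beta\rangle_X$.
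This is what creates the quadratic shape.
\item \textbf{Normal crossings and self-intersection.} After arranging $D$ to be simple normal crossing with smooth components $D_i$ and lifting $Z$ to $\widetilde Z=\sum_i\Gamma_i$ on $\sqcup_i D_i\times X$, the diagonal terms become
$\langle k_{i*}\Gamma_i^*\alpha,\,k_{i*}\Gamma_i^*\beta\rangle_X
=\langle \delta_i\smile\Gamma_i^*\alpha,\Gamma_i^*\beta\rangle_{D_i}$,
where $\delta_i=k_i^*[D_i]$ is the \emph{self-intersection} class of $D_i$, not a hyperplane class. Writing $\delta_i=\sum_l n_{il}[Z_{il}]$ with $Z_{il}$ smooth of dimension $n-2$ and restricting $\Gamma_i$ to $Z_{il}\times X$ gives terms of the desired shape.
\item \textbf{Polarization identity for cross terms.} For $i\neq j$ the cross terms live on $W_{ij}=D_i\cap D_j$ (already of dimension $n-2$), but they involve \emph{two different} correspondences $\Gamma_{ij},\Gamma_{ji}$, producing $\langle\Gamma_{ij}^*\alpha,\Gamma_{ji}^*\beta\rangle+\langle\Gamma_{ji}^*\alpha,\Gamma_{ij}^*\beta\rangle$. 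This is not of the form (\ref{eqcond}); one needs the identity
\[
\langle a,b'\rangle+\langle a',b\rangle=\langle a+a',b+b'\rangle-\langle a,a'\rangle-\langle b,b'\rangle
\]
applied with $a=\Gamma_{ij}^*\alpha$, $a'=\Gamma_{ji}^*\alpha$, etc., to rewrite it as a $\pm1$ combination of terms $\langle\Gamma^*\alpha,\Gamma^*\beta\rangle_{W_{ij}}$ with a single $\Gamma\in\{\Gamma_{ij},\Gamma_{ji},\Gamma_{ij}+\Gamma_{ji}\}$.
\end{enumerate}
Without these steps the passage from ``$[Z]$ supported on $D\times X$'' to the statement (\ref{eqcond}) is not justified. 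Note also that this direction uses neither the torsion-freeness of $H^*(X,\mathbb{Z})$ nor the algebraicity of $H^{2i}$ for $2i\neq n$; those hypotheses are only needed for the converse.
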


Note that the condition (\ref{eqcond}) presents obvious similarities
with the one considered  in \cite{shen} by Shen, who studied the
case of cubic fourfolds. It is however weaker in several respects:
the integers $n_i$ do not need to be positive, and the
correspondences $\Gamma_i$ do not need to factor through the variety
of lines. Finally, the condition formulated by Shen is only
conjecturally a necessary condition for rationality, while our
condition is actually a necessary condition for the triviality of
the universal ${\rm CH}_0$ group, hence a fortiori for (stable)
rationality.
\begin{rema}{\rm Concerning the second assumption in Theorem \ref{theocohdecomp}, it is satisfied by
uniruled threefolds by \cite{voisinuniruled}, but it is not clear that it is satisfied by
Fano complete intersections in any dimension. The group
$H^{2i}(X,\mathbb{Z}),\,2i\not=n$, is equal to $\mathbb{Z}$ by
Lefschetz hyperplane restriction theorem, but Koll\'ar \cite{kollar}
exhibits examples of hypersurfaces where this group is not generated
by an algebraic class for $2i>n$. It is not known if such Fano
examples can be constructed.}
\end{rema}

The case of rationally connected threefolds $X$ is also particularly
interesting. In this case, we complete the results of
\cite{voisinjag} by proving the following result. Let $J(X)$ be the
intermediate Jacobian of $X$. It is isomorphic as a group to ${\rm
CH}^2(X)_{hom}$ via the Abel-Jacobi map. It is canonically a
principally polarized abelian variety, the polarization being
determined by the intersection pairing on $H^3(X,\mathbb{Z})/{\rm
torsion}\cong H^1(J(X),\mathbb{Z})$. Let $\theta\in
H^2(J(X),\mathbb{Z})$ be the class of the Theta divisor of $J(X)$.
\begin{theo} \label{theocohdecomp3fold} (See also Theorem
\ref{cororc3}) Let $X$ be a rationally connected threefold. Then $X$
admits a cohomological decomposition of the diagonal if and only if
the following three conditions are satisfied:
\begin{enumerate}
\item \label{item1} $H^3(X,\mathbb{Z})$ has no torsion.
\item \label{item2} There exists a universal codimension $2$ cycle in $X\times J(X)$.
\item \label{item3} The minimal class $\theta^{g-1}/(g-1)!$ on $J(X)$, ${\rm dim}\,J(X)=g$, is
algebraic, that is, the class of a $1$-cycle in $J(X)$.
\end{enumerate}
\end{theo}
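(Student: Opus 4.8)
The plan is to deduce this from Theorem \ref{theocohdecomp} combined with the known structure of the cohomology and Hodge theory of a rationally connected threefold. First I would recall that for a rationally connected $X$ of dimension $3$, the only nonzero Hodge numbers $h^{p,q}$ with $p\neq q$ occur in degree $3$, namely $h^{3,0}=0$ and the interesting piece is $H^3(X,\mathbb{C})=H^{2,1}\oplus H^{1,2}$; moreover $H^1(X,\mathbb{Z})=0$ and $H^2(X,\mathbb{Z})$, $H^4(X,\mathbb{Z})$ are generated by algebraic classes (using $H^2=\mathrm{NS}$ since $h^{2,0}=0$, and $H^4$ by hard Lefschetz / $\mathrm{CH}^1$ on a smooth threefold, noting that a priori $H^4$ could carry torsion but $H_2$ is always generated by algebraic curve classes up to that torsion; here one must be a little careful and I would argue that under condition \ref{item1} the whole picture is torsion-free in the relevant degrees, or handle the torsion separately as in \cite{voisinjag}). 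Thus the hypotheses of Theorem \ref{theocohdecomp} are met once $H^3(X,\mathbb{Z})$ is torsion-free, so by that theorem a cohomological decomposition of the diagonal exists if and only if there are surfaces $Z_i$, correspondences $\Gamma_i\in \mathrm{CH}^{2}(Z_i\times X)$ and integers $n_i$ realizing the intersection pairing on $H^3(X,\mathbb{Z})$ as in (\ref{eqcond}).

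Next I would translate the existence of such $(\Gamma_i,n_i)$ into conditions \ref{item2} and \ref{item3}. The key point is that $H^3(X,\mathbb{Z})\cong H^1(J(X),\mathbb{Z})$ as polarized Hodge structures, the pairing $\langle\ ,\ \rangle_X$ corresponding to the principal polarization $\theta$. A correspondence $\Gamma_i$ from a surface $Z_i$ induces, via the Abel--Jacobi map, a morphism of Hodge structures $H^1(Z_i,\mathbb{Z})\to H^3(X,\mathbb{Z})$, equivalently a morphism of abelian varieties $\mathrm{Alb}(Z_i)\to J(X)$; and the term $\langle \Gamma_i^*\alpha,\Gamma_i^*\beta\rangle_{Z_i}$ computes the pullback of $\theta$ under this morphism, i.e.\ the class in $H^2(J(X),\mathbb{Z})$ of the image $1$-cycle (the image of $Z_i$'s fundamental class, or of the curve class pushed to $\mathrm{Alb}(Z_i)$ and then to $J(X)$). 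So condition (\ref{eqcond}) says precisely that some integral combination $\sum_i n_i [C_i]$ of algebraic $1$-cycle classes $[C_i]\in H^2(J(X),\mathbb{Z})=H^{2g-2}(J(X),\mathbb{Z})$ pairs with $\theta$-monomials exactly as the minimal class $\theta^{g-1}/(g-1)!$ does — and since the map ``pair against $H^{*}$ of $J(X)$'' is, for the relevant Hodge-theoretic reasons, injective enough on the span of the minimal class, this is equivalent to $\theta^{g-1}/(g-1)!$ being algebraic, which is condition \ref{item3}. The factorization of each $\Gamma_i$ through an abelian variety mapping to $J(X)$, compatibly and universally, is exactly what condition \ref{item2} (existence of a universal codimension $2$ cycle in $X\times J(X)$) packages: one needs a single cycle on $X\times J(X)$ inducing the identity on $\mathrm{CH}^2(X)_{hom}=J(X)$, and I would show that the decomposition-of-the-diagonal data produces such a cycle (this is essentially the argument of \cite{voisinjag} for the ``only if'' direction) and conversely that \ref{item2} lets one assemble the $\Gamma_i$ from an algebraic representative of the minimal class.

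For the converse direction I would run this backwards: given \ref{item1}, \ref{item2}, \ref{item3}, use the universal cycle from \ref{item2} to get a correspondence between a curve in $J(X)$ (representing the minimal class, available by \ref{item3}) and $X$, check that the resulting data satisfies (\ref{eqcond}) — this reduces to the numerical identity $\langle \theta^{g-1}/(g-1)!,\ \cdot\ \rangle$ reproducing the polarization form, which is a standard computation on a ppav — and then invoke the ``if'' direction of Theorem \ref{theocohdecomp} together with \ref{item1} to conclude that $X$ has a cohomological decomposition of the diagonal. I expect the main obstacle to be the careful bookkeeping in the equivalence between condition (\ref{eqcond}) and the algebraicity of the \emph{minimal} class specifically (rather than merely some class with the right top self-intersection): one has to use that $\mathrm{End}_{HS}$ of a generic ppav is $\mathbb{Z}$ only in the very general case, whereas here the statement is for \emph{all} rationally connected threefolds, so the argument must instead exploit the precise Hodge-theoretic form of (\ref{eqcond}) — namely that it pins down the cycle class in \emph{all} cohomological degrees of $J(X)$ via the decomposition of $\theta^{g}$, not just its intersection number with $\theta$ — and the compatibility of the universal cycle with the principal polarization. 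The secondary technical point is the torsion in $H^4(X,\mathbb{Z})$: I would address it by noting, as in \cite{voisinjag}, that a cohomological decomposition of the diagonal forces $H^4(X,\mathbb{Z})$ (equivalently $H_2(X,\mathbb{Z})$) to be torsion-free and algebraically generated, so that condition \ref{item1} together with the decomposition already guarantees the hypotheses of Theorem \ref{theocohdecomp} are in force, closing the loop.
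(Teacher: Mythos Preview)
Your strategy is the paper's: the only new content is the necessity of \ref{item3}, deduced from condition (*) of Theorem \ref{theocohdecomp}, while the necessity of \ref{item1}, \ref{item2} and the full sufficiency direction are cited from \cite{voisinjag}. Two points need correcting.

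First, a dimension slip: for $n=3$ the varieties $Z_i$ in condition (*) have dimension $n-2=1$, so they are \emph{curves}, not surfaces. The map $\gamma_i:=\Phi_X\circ\Gamma_{i*}:Z_i\to J(X)$ then directly produces a $1$-cycle $Z_i':=\gamma_{i*}(Z_i)$ on $J(X)$; no passage through ${\rm Alb}(Z_i)$ is needed, and condition \ref{item2} plays no role in this step.

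Second, the ``main obstacle'' you anticipate does not exist, and your worry about ${\rm End}_{HS}$ is misplaced. You are not merely testing against ``$\theta$-monomials'': by Poincar\'e duality on $J(X)$, a class in $H^{2g-2}(J(X),\mathbb{Z})$ is determined by its pairing with all of $H^2(J(X),\mathbb{Z})=\bigwedge^2 H^1(J(X),\mathbb{Z})$, i.e.\ with every $\alpha\wedge\beta$ for $\alpha,\beta\in H^1(J(X),\mathbb{Z})$. Under the canonical isomorphism $H^1(J(X),\mathbb{Z})\cong H^3(X,\mathbb{Z})$, $\alpha\mapsto\alpha'$, one has $\gamma_i^*\alpha=\Gamma_i^*\alpha'$ in $H^1(Z_i,\mathbb{Z})$, and $\langle\frac{\theta^{g-1}}{(g-1)!}\cdot\alpha,\beta\rangle_{J(X)}=\langle\alpha',\beta'\rangle_X$ by the very definition of the principal polarization. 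So condition (*) is \emph{literally} the equation $\sum_i n_i[Z_i']=\theta^{g-1}/(g-1)!$ in $H^{2g-2}(J(X),\mathbb{Z})$, with no Hodge-theoretic subtlety and valid for every $X$, not just the very general one.

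Your proposed converse (restrict the universal cycle from \ref{item2} to a curve $C\subset J(X)$ of minimal class from \ref{item3}, then verify (*) and invoke the ``if'' direction of Theorem \ref{theocohdecomp}) is a legitimate and slightly more self-contained alternative to citing \cite[Theorem 4.9]{voisinjag}; to make it go through you need that the hypotheses (i), (ii) of Theorem \ref{theocohdecomp} hold, in particular that $H^4(X,\mathbb{Z})$ is algebraic, which is \cite{voisinuniruled} and uses \ref{item1}.
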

The main new result in this theorem is the fact that condition
\ref{item3} above is implied by the existence of a cohomological
decomposition of the diagonal. In particular, it is a necessary
condition for stable rationality. This can be seen as a variant of
Clemens-Griffiths criterion which can be stated as saying  that a
necessary criterion for rationality of a threefold is the fact that
the  minimal class $\theta^{g-1}/(g-1)!$ on $J(X)$ is the class of
an {\it effective} curve in $J(X)$.

 Note that examples of unirational threefolds not
satisfying \ref{item1} were constructed by Artin and Mumford
\cite{AM}, and examples of unirational threefolds not satisfying
\ref{item2} were constructed in \cite{voisindoublesolid}. It is not
known if examples not satisfying \ref{item3} exist. More generally,
it is not known if there exists any principally polarized abelian
variety $(A,\Theta)$ such that the minimal class
$\theta^{g-1}/(g-1)!$ is not algebraic on $A$, where $g={\rm
dim}\,A$. Notice that  for many Fano threefolds, the intermediate
Jacobian $J(X)$ is a Prym variety, so the class
$2\theta^{g-1}(g-1)!$ is known to be algebraic. In the case of cubic
threefolds, the algebraicity of $\theta^4/4!$ is a classical
completely open problem. Combining the theorems above, we get in
this case:
\begin{theo}\label{theocubthree} Let $X$ be a smooth cubic threefold.
Then $X$ has universally trivial ${\rm CH}_0$ group if and only if
the class $\theta^4/4!$ on $J(X)$ is algebraic. This happens (at
least) on a  countable union of closed subvarieties of codimension
$\leq3$
 of the moduli space of $X$.
\end{theo}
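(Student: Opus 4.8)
The proof has two independent parts: the equivalence, which is a formal consequence of the earlier results together with classical facts about cubic threefolds, and the lower bound on the ``good'' locus, which needs a genuine geometric input. \emph{The equivalence.} A smooth cubic threefold $X$ is Fano, hence rationally connected, and $\dim X=3$ is odd, so Theorem \ref{theoeqchcohdec} applies and shows that $X$ has universally trivial ${\rm CH}_0$ if and only if it admits a cohomological decomposition of the diagonal. Feeding this into Theorem \ref{theocohdecomp3fold}, applied to the rationally connected threefold $X$, such a decomposition exists if and only if (1) $H^3(X,\mathbb{Z})$ is torsion-free, (2) there is a universal codimension-$2$ cycle in $X\times J(X)$, and (3) the minimal class $\theta^{4}/4!$ on $J(X)$ is algebraic (here $\dim J(X)=h^{2,1}(X)=5$). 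Conditions (1) and (2) hold for every smooth cubic threefold: (1) because smooth hypersurfaces in projective space have torsion-free integral cohomology, and (2) because the Fano surface $F=F(X)$ of lines, together with its universal line $\mathcal{L}\subset F\times X$, produces such a cycle once one uses the isomorphism $J(X)\cong{\rm Alb}(F)$ and the fact that the Albanese image of $F$ generates $J(X)$ --- this is classical for cubic threefolds. Hence $X$ has universally trivial ${\rm CH}_0$ if and only if $\theta^4/4!$ is algebraic on $J(X)$.

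\emph{The locus.} I would produce the required family by singling out the locus $\Sigma\subset M_3$ of cubic threefolds $X$ for which $(J(X),\theta)$ is isomorphic, as a principally polarized abelian variety, to the polarized Jacobian $(JC,\theta_C)$ of a \emph{smooth} curve $C$ of genus $5$. On $\Sigma$ the minimal class is automatically algebraic: by Poincar\'e's formula the Abel--Jacobi curve $a(C)\subset JC=J(X)$ has cohomology class $\theta_C^{4}/4!=\theta^4/4!$. To bound the codimension of $\Sigma$, I invoke the global Torelli theorem of Clemens--Griffiths, together with infinitesimal Torelli, so that the period map $\mathcal{P}\colon M_3\hookrightarrow\mathcal{A}_5$ is a locally closed immersion and $\Sigma=\mathcal{P}^{-1}(\mathcal{M}_5)$, where $\mathcal{M}_5\subset\mathcal{A}_5$ is the Jacobian locus. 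Since $\dim\mathcal{A}_5=\binom{6}{2}=15$ while $\dim\mathcal{M}_5=3\cdot5-3=12$, the Jacobian locus has codimension $3$ in $\mathcal{A}_5$, whence $\operatorname{codim}_{M_3}\Sigma\le 3$ provided $\Sigma$ is nonempty. Finally, the full locus of cubic threefolds with algebraic minimal class is a priori a countable union of closed subvarieties of $M_3$ (stratify the $1$-cycles on $J(X)$ realizing the class by Hilbert polynomial), and it contains $\Sigma$; this is exactly the shape of the asserted conclusion.

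\emph{The main obstacle.} The crux is the nonemptiness of $\Sigma$: one must exhibit a smooth cubic threefold whose intermediate Jacobian is the Jacobian of a smooth curve, equivalently show that the $10$-dimensional image $\mathcal{P}(M_3)$ (note $\dim M_3=\binom{7}{3}-1-\dim{\rm PGL}_5=35-1-24=10$) actually meets the codimension-$3$ subvariety $\mathcal{M}_5\subset\mathcal{A}_5$. The natural approach is degeneration: when $X$ acquires one ordinary node, the limit of the $5$-dimensional intermediate Jacobians is the generalized Jacobian of a $1$-nodal genus-$5$ curve built from the genus-$4$ $(2,3)$-complete-intersection curve $C_0\subset\mathbb{P}^3$ associated to the projection from the node, and this semiabelian limit lies in $\overline{\mathcal{P}(M_3)}\cap\overline{\mathcal{M}_5}$ inside a toroidal compactification of $\mathcal{A}_5$. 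One then has to show, by a Kodaira--Spencer computation at the boundary, that the cubic-threefold deformations of the nodal $X$ sweep out enough of the normal directions to $\overline{\mathcal{M}_5}$ to force $\mathcal{P}(M_3)$ and $\mathcal{M}_5$ to meet in the interior --- transversality would pin the codimension at exactly $3$ along that component, but only $\le 3$ is claimed. An alternative is to analyze an explicit cubic threefold, for instance via the Prym presentation $J(X)\cong\operatorname{Prym}(\tilde{\Delta}/\Delta)$ coming from projection off a line and the surjectivity (degree $27$) of the Prym map $\mathcal{R}_6\to\mathcal{A}_5$, for which $J(X)$ can be arranged to be a Jacobian. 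Carrying out this nonemptiness step is where the real work lies; everything else is classical input layered on top of Theorems \ref{theoeqchcohdec} and \ref{theocohdecomp3fold}.
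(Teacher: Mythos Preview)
Your argument contains two genuine gaps, one in each half.

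\textbf{The equivalence.} You assert that condition (2) --- the existence of a universal codimension-$2$ cycle on $J(X)\times X$ --- is ``classical for cubic threefolds'' via the Fano surface $F$ and its universal line. This is not correct: the Albanese map $F\to J(X)$ has $2$-dimensional image inside the $5$-dimensional $J(X)$, so the universal line on $F\times X$ does not pull back to a cycle on $J(X)\times X$ with the required property, and the fact that the image generates $J(X)$ as a group does not by itself produce a universal cycle. The paper states explicitly that the existence of such a cycle is \emph{not known} for cubic threefolds in general. What the paper uses instead is the Markushevich--Tikhomirov result that $J(X)$ admits a parametrization with rationally connected fibers, together with \cite[Theorem~4.1]{voisinjag}, which shows that such a parametrization \emph{plus} algebraicity of $\theta^4/4!$ yields a universal codimension-$2$ cycle. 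In other words, (3) $\Rightarrow$ (2) for cubic threefolds, and this is how the equivalence reduces to (3) alone.

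\textbf{The locus.} Your proposed locus $\Sigma$ --- cubic threefolds whose intermediate Jacobian is \emph{isomorphic} as a ppav to the Jacobian of a smooth curve --- is empty. This is precisely the content of the Clemens--Griffiths irrationality theorem: for every smooth cubic threefold $X$, $(J(X),\theta)$ is never a Jacobian or a product of Jacobians (the theta divisor has an isolated triple point whose tangent cone is $X$ itself). So although your dimension count $10+12-15=7$ is formally correct, the intersection $\mathcal{P}(M_3)\cap\mathcal{M}_5$ inside $\mathcal{A}_5$ is in fact empty; the ``main obstacle'' you flag is not merely hard but impossible. The paper avoids this by asking only for an \emph{odd-degree isogeny} from $(J(X),\theta)$ to $(J(C),m\theta_C)$: since $(J(X),\theta)$ is a Prym, $2\theta^4/4!$ is already algebraic, and an odd-degree isogeny to a Jacobian makes an odd multiple of $\theta^4/4!$ algebraic, hence the class itself. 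Such isogenies are produced explicitly for cubic threefolds carrying a $\mathbb{Z}/3\mathbb{Z}$-action, by splitting $H^3(X,\mathbb{Z})$ up to $3$-power index into two pieces of dimensions $6$ and $4$, each giving a ppav which is automatically a Jacobian; a period-map dimension count then bounds the codimension by $3$.
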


The paper is organized as follows: Theorem \ref{theoeqchcohdec} is
proved in Section \ref{sec2}. Theorem \ref{theocohdecomp} is proved
in Section \ref{sec3} and Theorem \ref{theocohdecomp3fold} is proved
in Section \ref{sec4}. In Section \ref{sec5}, we come back to the
case of cubic hypersurfaces, where we prove Theorem
\ref{theochowinterdim} and establish further results, particularly
in dimension  $4$.

\vspace{0.5cm}

 {\bf Thanks.} {\it I
thank Arnaud Beauville, Jean-Louis Colliot-Th\'{e}l\`{e}ne and Brendan
Hassett for interesting discussions related to this paper. I also
thank Burt Totaro for  indicating the  references
\cite{acta}, \cite{milgram}, for explaining  me the results
proved there, and also for writing the very useful paper \cite{totaro}. Finally, I am very  grateful to the referee for
 his/her careful reading and criticism.}
\section{Chow-theoretic and cohomological decomposition of the diagonal
\label{sec2}} We prove in this section Theorem \ref{theoeqchcohdec}.
In the case of cubic hypersurfaces of dimension $\leq4$, a shorter
proof will be given, which uses   the following result of
independent interest:
  \begin{prop}\label{propredalg} Let $X$ be a smooth projective variety.
  If $X$ admits a decomposition of the diagonal modulo algebraic equivalence,
  that is
  \begin{eqnarray}
  \label{eqpouredalg} \Delta_X-X\times x=Z\,\,{\rm in}\,\,{\rm CH}(X\times X)/{\rm alg},
  \end{eqnarray}
  with $Z$ supported on $D\times X$ for some closed algebraic subset
  $D\subsetneqq X$,
  then $X$ admits a Chow-theoretic decomposition of the diagonal.
  \label{proredalg}
  \end{prop}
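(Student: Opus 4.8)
The plan is to evaluate the decomposition on the generic point of $X$ and then to upgrade ``algebraically trivial'' to ``rationally trivial'' using rigidity of homomorphisms of abelian varieties. Write the hypothesis as $\Delta_X-X\times x-Z=W$ in ${\rm CH}^n(X\times X)$, $n={\rm dim}\,X$, where $Z$ is supported on $D\times X$ with $D\subsetneq X$ closed and $W$ is algebraically equivalent to $0$. Any two points of the connected variety $X$ lie on a connected curve, hence are algebraically equivalent, so the base point $x$ may be chosen freely, and we take $x\notin D$. As recalled in the introduction, $X$ admits a Chow-theoretic decomposition of the diagonal if and only if $[\eta]=[x_L]$ in ${\rm CH}_0(X_L)$, where $L=\mathbb{C}(X)$ and $\eta\in X(L)$ is the generic point.

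First I would base-change the identity above to $L$ and let the resulting self-correspondence of $X_L$ act on the zero-cycle $[\eta]$. Since $\eta\notin D_L$ one gets $Z_*[\eta]=0$, while $(\Delta_X)_*[\eta]=[\eta]$ and $(X\times x)_*[\eta]=[x_L]$, so that
$$[\eta]-[x_L]=W_*[\eta]\,\,{\rm in}\,\,{\rm CH}_0(X_L).$$
Thus the obstruction $[\eta]-[x_L]$ is an algebraically trivial zero-cycle of degree $0$ on $X_L$. Next I would realize the algebraic equivalence geometrically: since $W$ is algebraically trivial on the smooth projective variety $X\times X$, there exist a smooth projective connected curve $B$, points $b_0,b_1\in B(\mathbb{C})$, and $\Gamma\in{\rm CH}^n(B\times X\times X)$ with $\Gamma_{b_0}=0$ and $\Gamma_{b_1}=W$. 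Viewing $\Gamma$ as an algebraic family, parametrized by $B$, of self-correspondences of $X$ and applying $\Gamma_t$ to the generic point $\eta$ (equivalently, restricting $\Gamma_L$ along the generic point of the first $X$-factor), one obtains a family $\gamma\in{\rm CH}^n(B_L\times X_L)$ of zero-cycles on $X_L$ over $B_L$ with $\gamma_{b_0}=0$ and $\gamma_{b_1}=W_*[\eta]=[\eta]-[x_L]$. Because $B$ is a curve, $t\mapsto\gamma_t$ factors through ${\rm Pic}^0$, yielding a homomorphism $\varphi\colon {\rm Jac}(B)_L\to{\rm CH}_0(X_L)_0$ with $\varphi([b_1]-[b_0])=[\eta]-[x_L]$.

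The crux is to deduce ${\rm Alb}(X)=0$. Composing $\varphi$ with the Albanese map $a_X\colon X\to{\rm Alb}(X)$ (normalized by $a_X(x)=0$) gives a homomorphism of abelian varieties $\overline\varphi\colon{\rm Jac}(B)_L\to{\rm Alb}(X)_L$. Since $\mathbb{C}$ is algebraically closed, every homomorphism between the $L$-base-changes of abelian varieties over $\mathbb{C}$ is already defined over $\mathbb{C}$ (by rigidity and spreading-out), so $\overline\varphi$ descends to $\mathbb{C}$; as $[b_1]-[b_0]\in{\rm Jac}(B)(\mathbb{C})$, the class $a_X(\eta)={\rm alb}([\eta]-[x_L])=\overline\varphi([b_1]-[b_0])$ lies in ${\rm Alb}(X)(\mathbb{C})$. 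But $a_X(\eta)\in{\rm Alb}(X)(L)$ is precisely the tautological point attached to the morphism $a_X$; it can be a constant point only if $a_X$ is constant, and since the image of $a_X$ generates ${\rm Alb}(X)$ this forces ${\rm Alb}(X)=0$.

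With ${\rm Alb}(X)=0$ the group ${\rm CH}_0(X_L)_0$ is the Albanese kernel and the obstruction $[\eta]-[x_L]$ lies in it (equivalently $\overline\varphi=0$); it remains to show that this particular algebraically trivial class vanishes, and this is the genuine obstacle. On a general variety with trivial Albanese (say a $K3$ surface) algebraically trivial zero-cycles need not be rationally trivial, so one must use that $[\eta]-[x_L]$ is not arbitrary but equals $W_*[\eta]$ with $W=\Delta_X-X\times x-Z$. Here I would re-run the construction, now applying $W$ to the class $[\eta]-[x_L]$ itself — using $W_*[x_L]=0$, which holds because $x\notin D$ — and iterate; alternatively one would organize the whole argument as a descending induction on ${\rm dim}\,D$, aiming to restrict the decomposition, stratum by stratum, to data of the same type with strictly smaller support, the case of empty support giving rational triviality. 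This produces $[\eta]=[x_L]$ in ${\rm CH}_0(X_L)$, i.e. the desired Chow-theoretic decomposition of the diagonal.
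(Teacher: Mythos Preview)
Your reduction to the generic point is correct: you obtain $[\eta]-[x_L]=W_*[\eta]$ in ${\rm CH}_0(X_L)$ with $W$ algebraically equivalent to $0$, and you correctly identify that the whole problem is to show this particular class vanishes. But the step you flag as ``the genuine obstacle'' is exactly where the argument is incomplete, and neither of your two proposed fixes works as stated.

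Iterating the action of $W$ gets you nowhere: by your own computation $W_*[x_L]=0$, so $W_*([\eta]-[x_L])=W_*[\eta]=[\eta]-[x_L]$, and hence $W_*^N[\eta]=[\eta]-[x_L]$ for every $N\ge 1$. The class is a fixed point of $W_*$, and iterating the \emph{action} is stationary. What is actually needed is that the \emph{correspondence} $W$ itself is nilpotent under composition, i.e.\ $W^{\circ N}=0$ in ${\rm CH}^n(X\times X)$ for some $N$. This is the nilpotence theorem for cycles algebraically equivalent to zero (Voevodsky, Voisin); once you have it, $0=(W^{\circ N})_*[\eta]=[\eta]-[x_L]$ and you are done. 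The paper's proof is precisely this: it invokes nilpotence to get $(\Delta_X-X\times x-Z)^{\circ N}=0$ in ${\rm CH}(X\times X)$, then expands using that $\Delta_X-X\times x$ is idempotent and $Z\circ(X\times x)=0$ to obtain $\Delta_X-X\times x=W'\circ Z$ with $W'\circ Z$ supported on $D\times X$. Your descending induction on $\dim D$ is too vague to assess; I do not see how to push the support of $Z$ down without an input of comparable strength.

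The detour through ${\rm Alb}(X)=0$ is correct but irrelevant to the conclusion (and in any case follows at once from the cohomological decomposition implied by the hypothesis). The missing idea is nilpotence; without it the argument does not close.
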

  \begin{proof} We use the fact proved in \cite{voe}, \cite{voivoe} that cycles
  algebraically equivalent to $0$ are nilpotent for the composition of self-correspondences.
  We write (\ref{eqpouredalg}) as
  \begin{eqnarray}
  \label{eqpouredalg2} \Delta_X-X\times x-Z=0\,\,{\rm in}\,\,{\rm CH}(X\times X)/{\rm alg}
  \end{eqnarray}
  and apply the nilpotence result mentioned above. This provides
  \begin{eqnarray}
  \label{eqpouredalg3} (\Delta_X-X\times x-Z)^{\circ N}=0\,\,{\rm in}\,\,{\rm CH}(X\times X)
  \end{eqnarray}
  for some large $N$.
  As $\Delta_X-X\times x$ is a projector and $Z\circ (X\times x)=0$, this gives
  \begin{eqnarray}
  \label{eqpouredalg4} \Delta_X-X\times x-W\circ Z=0\,\,{\rm in}\,\,{\rm CH}(X\times X),
  \end{eqnarray}
  for some cycle $W$ on $X\times X$. As $W\circ Z$ is supported on $D\times X$, for some
  $D\subsetneqq X$, this concludes the proof.
  \end{proof}
  \begin{coro} \label{coro26juill} Let $S$ be a surface of general type with ${\rm CH}_0(S)=\mathbb{Z}$ and
${\rm Tors}(H^*(S,\mathbb{Z}))=0$ (for example the Barlow surface
(\cite{barlow}). Then $S$ has universally trivial ${\rm CH}_0$
group.
\end{coro}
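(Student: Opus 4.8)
The plan is to exhibit a Chow‑theoretic decomposition of the diagonal of $S$ directly. By Proposition \ref{proredalg} it would already be enough to produce one modulo algebraic equivalence, but in fact I would show the stronger statement $\mathrm{CH}^2(S\times S)_{hom}=0$ and then read off the decomposition from the K\"unneth decomposition of $[\Delta_S]$. First I would record the consequences of the hypotheses: since the Albanese variety of $S$ is a quotient of $\mathrm{CH}_0(S)_{\deg 0}=0$ we get $q(S)=0$, and $\mathrm{CH}_0(S)=\mathbb{Z}$ forces $p_g(S)=0$ by Mumford's theorem; hence $H^1(S,\mathbb{Z})=H^3(S,\mathbb{Z})=0$, and by the Lefschetz $(1,1)$‑theorem (using $H^{2,0}(S)=0$ and torsion‑freeness) $H^2(S,\mathbb{Z})=\mathrm{NS}(S)$, while by hypothesis $H^*(S,\mathbb{Z})$ is torsion free. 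Thus every integral cohomology class of $S$ is the class of an algebraic cycle and, the intersection form on $H^2(S,\mathbb{Z})$ being unimodular, so is every element of a Poincar\'e‑dual basis of $H^2(S,\mathbb{Z})$. By the K\"unneth formula (valid over $\mathbb{Z}$ here) $H^*(S\times S,\mathbb{Z})$ is torsion free, $H^3(S\times S,\mathbb{Z})=0$, and therefore $H^3(S\times S,\mathbb{Q}/\mathbb{Z})=0$ by the universal coefficient theorem.

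Next I would prove $\mathrm{CH}^2(S\times S)_{hom}=0$ in two steps. Rationally: $\mathrm{CH}_0(S\times S)_{\mathbb{Q}}=\mathbb{Q}$ (a consequence of $\mathrm{CH}_0(S)=\mathbb{Z}$), so by Bloch--Srinivas the Abel--Jacobi map on $\mathrm{CH}^2(S\times S)_{hom,\mathbb{Q}}$ is injective, with target $J^3(S\times S)_{\mathbb{Q}}$, which vanishes since $H^3(S\times S,\mathbb{Q})=0$; hence $\mathrm{CH}^2(S\times S)_{hom}$ is a torsion group. Integrally: by the theorem of Merkurjev--Suslin and Bloch, Bloch's cycle class map embeds $\mathrm{CH}^2(S\times S)_{\mathrm{tors}}$ into $H^3(S\times S,\mathbb{Q}/\mathbb{Z})$, which is $0$ by the previous paragraph. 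Therefore $\mathrm{CH}^2(S\times S)_{hom}=0$.

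It then remains to invoke the K\"unneth decomposition of the diagonal class. In $H^4(S\times S,\mathbb{Z})$ one has $[\Delta_S]=\sum_\nu e_\nu\otimes e_\nu^\vee$, where $(e_\nu)$ runs over a homogeneous basis of $H^*(S,\mathbb{Z})$ and $(e_\nu^\vee)$ is the Poincar\'e‑dual basis: the degree $(0,4)$ term is $[S\times x]$, the degree $(4,0)$ term is $[x\times S]$, the $(1,3)$ and $(3,1)$ terms vanish because $H^1(S,\mathbb{Z})=H^3(S,\mathbb{Z})=0$, and the $(2,2)$ term is the class of $\sum_i D_i\times D_i^\vee$ for divisors $D_i,D_i^\vee$ on $S$ (since both $H^2(S,\mathbb{Z})$ and its dual basis consist of divisor classes). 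Consequently $\Delta_S$ and $S\times x+x\times S+\sum_i D_i\times D_i^\vee$ have the same class in $H^4(S\times S,\mathbb{Z})$, hence, as $\mathrm{CH}^2(S\times S)_{hom}=0$, are equal in $\mathrm{CH}^2(S\times S)$. The cycle $Z:=x\times S+\sum_i D_i\times D_i^\vee$ is supported on $D\times S$ with $D:=\{x\}\cup\bigcup_i D_i\subsetneqq S$, so $\Delta_S=S\times x+Z$ in $\mathrm{CH}^2(S\times S)$ is the required Chow‑theoretic decomposition of the diagonal; equivalently $\mathrm{CH}_0(S)$ is universally trivial, which (for the Barlow surface) gives the statement.

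The main obstacle is the vanishing of the \emph{torsion} of $\mathrm{CH}^2(S\times S)_{hom}$: its rational vanishing is a routine application of Bloch--Srinivas, but removing the torsion genuinely requires the deep injectivity of Bloch's map on torsion codimension‑two cycles, and it is exactly there that the hypothesis $\mathrm{Tors}(H^*(S,\mathbb{Z}))=0$ is used, namely to make $H^3(S\times S,\mathbb{Q}/\mathbb{Z})$ vanish.
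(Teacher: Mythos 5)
Your proof is correct, but it takes a genuinely different route from the paper's. Both arguments start identically: Mumford's theorem gives $p_g(S)=q(S)=0$, so all of $H^*(S,\mathbb{Z})$ is algebraic, and the integral K\"unneth decomposition (using torsion-freeness) writes $[\Delta_S]$ as $[S\times x]$ plus the class of a cycle supported on $D\times S$, $D\subsetneqq S$. From there the paper works with weaker information: it invokes Bloch--Srinivas \cite{blochsrinivas} only to conclude that a codimension $2$ cycle on $S\times S$ cohomologous to zero is algebraically equivalent to zero (since ${\rm CH}_0(S\times S)=\mathbb{Z}$), obtaining a decomposition of $\Delta_S$ modulo algebraic equivalence, and then upgrades it to rational equivalence via the nilpotence theorem for algebraically trivial self-correspondences (Proposition \ref{propredalg}). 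You instead prove the stronger statement ${\rm CH}^2(S\times S)_{hom}=0$: rationally by the Bloch--Srinivas injectivity of the Abel--Jacobi map combined with $H^3(S\times S,\mathbb{Q})=0$, and on torsion by the Bloch/Merkurjev--Suslin injection of ${\rm CH}^2(S\times S)_{tors}$ into $H^3(S\times S,\mathbb{Q}/\mathbb{Z})$, which vanishes because $H^3(S\times S,\mathbb{Z})=0$ and $H^4(S\times S,\mathbb{Z})$ is torsion-free; the cohomological identity then holds on the nose in ${\rm CH}^2(S\times S)$, giving the Chow-theoretic decomposition directly. Your route avoids the nilpotence theorem but relies on the deep Merkurjev--Suslin input; in exchange it yields more (homological and rational equivalence coincide in codimension $2$ on $S\times S$), and it is close in spirit to the alternative proof the paper attributes to \cite{ACTP}. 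The auxiliary facts you use without proof (e.g. ${\rm CH}_0(S\times S)_{\mathbb{Q}}=\mathbb{Q}$, which the paper also uses in its integral form) are standard consequences of ${\rm CH}_0(S)=\mathbb{Z}$, so there is no gap.
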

\begin{proof} (See \cite{ACTP} for a different proof.) As $p_g(S)=q(S)=0$ by Mumford's theorem \cite{mumford},
the cohomology $H^*(S,\mathbb{Z})$  is generated by classes of
algebraic cycles. As $H^*(S,\mathbb{Z})$ has no torsion, the
cohomology of $S\times S$ admits a K\"unneth decomposition with
integral coefficients, so that we can write the class of the
diagonal of $S$ as
\begin{eqnarray}
\label{eqintrodiag}[\Delta_S]=\sum_i[\alpha_i]\otimes
[\beta_i]\,\,{\rm in}\,\,H^4(S\times S,\mathbb{Z}),
\end{eqnarray}
where $\alpha_i$, $\beta_i$ are algebraic cycles on $S$ with ${\rm
dim}\,\alpha_i+{\rm dim}\,\beta_i=2$. Clearly, $[\alpha_i]\otimes
[\beta_i]=[\alpha_i\times \beta_i]$ is supported over $D\times S$
with $D\subsetneqq S$ when ${\rm dim}\,\alpha_i<  2$, so that
(\ref{eqintrodiag}) provides in fact a cohomological decomposition
of $\Delta_S$:
\begin{eqnarray}
\label{eqintrodiag2}[\Delta_S]=[S\times s]+[Z]\,\,{\rm
in}\,\,H^4(S\times S,\mathbb{Z}),
\end{eqnarray}
where $Z$ is a cycle supported over $D\times S$, for some
$D\subsetneqq S$.
 Next, as ${\rm CH}_0(S\times S)=\mathbb{Z}$,
codimension $2$ cycles on $S\times S$ which are cohomologous to $0$
are algebraically equivalent to $0$ by \cite{blochsrinivas}. Thus
the cycle $\Gamma:=\Delta_S-S\times s-Z$ is algebraically equivalent
to $0$ on $S\times S$. The surface $S$ thus admits a decomposition
of the diagonal modulo algebraic equivalence, and we then apply
Proposition \ref{propredalg}.
\end{proof}
For a smooth projective variety $X$, we denote by $X^{[2]}$ the
second punctual Hilbert scheme of $X$. It is smooth, obtained as the
quotient of the blow-up $\widetilde{X\times X}$ of $X\times X$ along
the diagonal by its natural involution. Let $\mu:X\times
X\dashrightarrow X^{[2]}$ be the natural rational map and
$r:\widetilde{X\times X}\rightarrow X^{[2]}$ be the quotient
morphism.
 We start with the following result:
\begin{lemm} \label{le1}Let $X$ be a smooth projective variety of dimension $n$. Then there exists a
codimension $n$ cycle $Z$ in $X^{[2]}$ such that $\mu^*Z=\Delta_X$
in ${\rm CH}^n(X\times X)$.
\end{lemm}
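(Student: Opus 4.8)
The plan is to produce the cycle $Z$ on $X^{[2]}$ by pushing forward the diagonal from the blow-up $\widetilde{X\times X}$ and controlling the effect of the degree-$2$ covering $r$. First I would consider the strict transform (equivalently, in this case, the total transform, since the diagonal is the center) $\widetilde{\Delta}\subset\widetilde{X\times X}$ of the diagonal $\Delta_X\subset X\times X$, and note that $r$ restricted to $\widetilde{\Delta}$ is an isomorphism onto its image, because $\widetilde{\Delta}$ lies in the exceptional divisor $E$ and the involution $\iota$ acts on $E=\mathbb{P}(N_{\Delta_X/X\times X})=\mathbb{P}(T_X)$ fibrewise by $v\mapsto -v$, which is the identity on the base $\Delta_X\cong X$ viewed as... — more carefully, I would instead realize $\widetilde{\Delta}$ concretely. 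Over the diagonal, the blow-up introduces $E\cong\mathbb{P}(T_X)$, and the diagonal $\Delta_X$ (as a subvariety of $X\times X$) does not literally lift to a canonical section; rather, the correct object is: $\widetilde\Delta := $ the fibre of $r$-image we want. The cleaner route is the one below.

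The cleaner approach: write $\mu_*$ and $\mu^*$ via the two maps $X\times X \xleftarrow{\ \sigma\ } \widetilde{X\times X} \xrightarrow{\ r\ } X^{[2]}$, where $\sigma$ is the blow-down. For a cycle $W$ on $X^{[2]}$, $\mu^*W := \sigma_* r^* W$ by definition. Since $r$ is finite of degree $2$, for any cycle $V$ on $\widetilde{X\times X}$ we have $r^* r_* V = V + \iota^* V$. So the problem reduces to finding $V\in {\rm CH}^n(\widetilde{X\times X})$, supported on the exceptional divisor $E$, such that $\iota^* V = -V + (\text{something that }\sigma_*\text{ kills or corrects})$ — this is too lossy. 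Instead I would choose $V$ with $\iota^*V = V$ and $\sigma_* V = \Delta_X$; then setting $Z := \tfrac12 r_* V$... but $r_* V$ need not be divisible by $2$ as a cycle.

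Therefore the actual plan is: take $E\subset\widetilde{X\times X}$ the exceptional divisor, $\mathbb{P}(\mathcal{E})=E$ with $\mathcal{E}=\Omega_X$ or $T_X$ over $X\cong\Delta_X$, and let $h\in{\rm CH}^1(E)$ be the relative hyperplane class and $p:E\to X$ the projection. The involution $\iota$ acts trivially on $X$ and by $-1$ on the fibres of $\mathcal{E}$, hence fixes $h$ (the tautological line bundle's first Chern class is $\iota$-invariant since $-1$ acts on $\mathbb{P}^{n-1}$ preserving $\mathcal{O}(1)$). So every monomial $p^*\gamma\cdot h^{j}$ with $\gamma\in{\rm CH}^*(X)$ is $\iota$-invariant. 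I would look for $V = \sum_j p^*\gamma_j \cdot h^{j}$ (pushed into $\widetilde{X\times X}$ via $i:E\hookrightarrow\widetilde{X\times X}$) with $\sigma_*(i_* V) = \Delta_X$. Now $\sigma_*(i_*(p^*\gamma\cdot h^{n-1})) = $ a known multiple of $\gamma$ pushed onto $\Delta_X$ — by the standard blow-up formula, $\sigma_* i_*(h^{n-1}) = [\Delta_X]$ up to sign, and lower powers of $h$ push to $0$ under $\sigma_*$ (dimension reasons: $\sigma_* i_* (p^*\gamma\cdot h^j)$ lands in ${\rm CH}$ of $\Delta_X$ of the wrong dimension for $j<n-1$, and is $0$ for $j<n-1$). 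So I simply take $V = i_*(h^{n-1})$; then $\iota^* V = V$ and $r_* V$ is an honest cycle $Z'$ on $X^{[2]}$ supported on $r(E)$. We get $\mu^* Z' = \sigma_* r^* r_* V = \sigma_*(V + \iota^* V) = 2\sigma_* V = \pm 2[\Delta_X]$. To kill the factor $2$: note $r|_E : E \to r(E)$ is generically $2{:}1$ onto $r(E)$ but the class $h^{n-1}$ is $\iota$-\emph{anti}... no — here is the real subtlety, and the step I expect to be the main obstacle: I must arrange $r^* Z = V$ on the nose rather than $V + \iota^* V$, i.e. I want $Z$ with $r^* Z = i_*(h^{n-1})$ plus a correction supported away from $E$ that $\sigma_*$ annihilates. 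The resolution is that $i_*(h^{n-1})$, viewed in ${\rm CH}^n(\widetilde{X\times X})$, actually equals $-\sigma^*[\Delta_X] + (\text{terms in }E)$ by the key formula $\sigma^*\sigma_* = {\rm id}$ off the exceptional locus combined with the excess intersection formula for $\sigma^* i_* = i_*(\text{Segre-class correction})$; working this out shows $\sigma^*[\Delta_X]$ is itself $\iota$-invariant, so $\sigma^*[\Delta_X] = r^* Z$ for a unique cycle class $Z$ on $X^{[2]}$ (rational coefficients a priori), and then $\mu^* Z = \sigma_*\sigma^*[\Delta_X] = [\Delta_X]$. The integrality of $Z$ — that the $\iota$-invariant class $\sigma^*\Delta_X$ descends to an \emph{integral} cycle on the quotient — is the genuinely delicate point; I would establish it by exhibiting $Z$ explicitly as (the class of) the codimension-$n$ subscheme $r(\widetilde{\Delta})\subset X^{[2]}$, where $\widetilde\Delta\subset\widetilde{X\times X}$ is the proper transform construction that makes $\sigma|_{\widetilde\Delta}$ an isomorphism and $r|_{\widetilde\Delta}$ a closed immersion, so that $Z := r_*[\widetilde\Delta]$ is manifestly an integral cycle with $\mu^* Z = \sigma_* r^* r_* [\widetilde\Delta] = \sigma_*([\widetilde\Delta] + \iota_*[\widetilde\Delta])$; checking that $\sigma_*\iota_*[\widetilde\Delta] = 0$ (the image $\iota(\widetilde\Delta)$ lies in $E$ and maps to a proper subvariety of $\Delta_X$, hence has no component of the right dimension dominating $\Delta_X$) then yields $\mu^* Z = [\Delta_X]$ exactly. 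Thus the main work is the local analysis of $\widetilde\Delta$ and its image under $\iota$ near the exceptional divisor.
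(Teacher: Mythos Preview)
Your proposal has a genuine gap at the point you yourself flag as ``the genuinely delicate point''. The resolution you offer---exhibiting $Z$ as $r_*[\widetilde\Delta]$ for a proper transform $\widetilde\Delta$ of $\Delta_X$ with $\sigma|_{\widetilde\Delta}$ an isomorphism---cannot work, for two reasons. First, $\Delta_X$ is the \emph{center} of the blow-up $\sigma$, so it has no proper transform: $\sigma^{-1}(\Delta_X)=E$ is the whole exceptional divisor, of dimension $2n-1$, and there is no canonical $n$-dimensional lift birational to $\Delta_X$. Second, and more fundamentally, any $n$-cycle $V$ on $\widetilde{X\times X}$ with $\sigma_*V$ a nonzero multiple of $[\Delta_X]$ is necessarily supported in $E$; but the involution $\iota$ acts \emph{trivially} on $E$ (the swap acts by $-1$ on the normal bundle $N_{\Delta_X/X\times X}\cong T_X$, hence by the identity on its projectivization). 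So $\iota_*V=V$ for every such $V$, and your computation $\mu^*(r_*V)=\sigma_*(V+\iota_*V)=2\sigma_*V$ always produces an even multiple of $[\Delta_X]$. In particular your claim that $\sigma_*\iota_*[\widetilde\Delta]=0$ is false: it equals $\sigma_*[\widetilde\Delta]$.

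The missing idea is that one should not try to write $Z$ as $r_*$ of anything. The double cover $r:\widetilde{X\times X}\to X^{[2]}$ is branched along $r(E)$, and the standard theory of double covers provides a divisor class $\delta\in{\rm Pic}\,X^{[2]}$ with $r^*\delta=E_\Delta$ (equivalently $2\delta=[r(E)]$; this $\delta$ is \emph{not} $r_*$ of a divisor). Then $r^*(\delta^n)=E_\Delta^n$, and since $\sigma_*(E_\Delta^n)=(-1)^{n-1}[\Delta_X]$ by the usual self-intersection formula for the exceptional divisor, one gets $\mu^*\bigl((-1)^{n-1}\delta^n\bigr)=[\Delta_X]$. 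Thus $Z=(-1)^{n-1}\delta^n$ is an explicit integral cycle doing the job, and the whole proof is three lines once one knows about $\delta$.
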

\begin{proof} Let $E_\Delta$ be the exceptional divisor over the diagonal
of the blow-up map  $\tau: \widetilde{X\times X}\rightarrow X\times
X$. The key point is the fact that there is a (non effective)
divisor $\delta$ on $X^{[2]}$ such that $r^*\delta=E_\Delta$. It
follows that
$$r^*\delta^n=E_\Delta^n\,\,{\rm in}\,\,{\rm CH}^n(\widetilde{X\times X}).$$
Now we use the fact that
$$\tau_*((-1)^{n-1}E_\Delta^n)=\Delta_X \,\,{\rm in}\,\,{\rm CH}^n({X\times X}),$$
which gives
$\mu^*((-1)^{n-1}\delta^n)=\tau_*(r^*((-1)^{n-1}\delta^n))=\Delta_X
 \,\,{\rm in}\,\,{\rm CH}^n({X\times X})$.
\end{proof}
\begin{coro} \label{corosympull} Any symmetric codimension $n$ cycle on $X\times X$ is rationally equivalent to
$\mu^*\Gamma$ for a codimension $n$ cycle $\Gamma$ on $X^{[2]}$.
\end{coro}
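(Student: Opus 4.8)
The plan is to reduce everything to the complement of the diagonal, where $\mu$ becomes an \emph{\'etale} double cover, and then to absorb the remaining discrepancy supported on the diagonal using Lemma \ref{le1}. Write $\iota$ for the natural involution of $\widetilde{X\times X}$, so that $r$ is the quotient map for $\iota$. Let $D:=r(E_\Delta)\subset X^{[2]}$ be the divisor of non-reduced length $2$ subschemes and set $V_0:=X^{[2]}\setminus D$, $U_0:=\widetilde{X\times X}\setminus E_\Delta$. Then $\tau$ is an isomorphism over $U_0$, identifying $U_0$ with $X\times X\setminus\Delta_X$ and $\iota$ with the exchange of factors; $r$ restricts to an \'etale double cover $p\colon U_0\to V_0$; and $\mu^*=\tau_*r^*$ restricts over $U_0$ to the (flat) pull-back along this \'etale cover.

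First I would isolate an elementary descent statement: if $p\colon U_0\to V_0$ is an \'etale double cover with deck involution $\sigma$, then every codimension $c$ cycle $\alpha$ on $U_0$ with $\sigma_*\alpha=\alpha$ is of the form $p^*\beta$ for a codimension $c$ cycle $\beta$ on $V_0$. To see this, group the prime components of $\alpha$ into $\sigma$-orbits: a $\sigma$-invariant component $W$ satisfies $p^{-1}(p(W))=W$ and $p^*[p(W)]=[W]$ since $p$ is unramified, while for a free orbit $\{W,\sigma W\}$ one has $p^*[p(W)]=[W]+[\sigma W]$. Since $\sigma_*\alpha=\alpha$ forces $W$ and $\sigma W$ to occur with the same coefficient, $\alpha$ is a $\mathbb{Z}$-linear combination of cycles of the form $p^*[p(W)]$, and one takes $\beta$ to be the corresponding combination of the $[p(W)]$.

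Now let $\Sigma$ be a symmetric codimension $n$ cycle on $X\times X$, i.e.\ $\Sigma$ is invariant under the exchange of factors. Its restriction to $X\times X\setminus\Delta_X$ is invariant under the free involution $\iota$ on $U_0$, so the sublemma produces a codimension $n$ cycle $\Gamma_0$ on $V_0$ with $\mu^*\Gamma_0=\Sigma|_{X\times X\setminus\Delta_X}$. Let $\Gamma_1$ be the Zariski closure of $\Gamma_0$ in $X^{[2]}$. Then $\mu^*\Gamma_1$ and $\Sigma$ restrict to the same cycle on $X\times X\setminus\Delta_X$ (namely $\mu^*\Gamma_0$), so by the localization exact sequence ${\rm CH}_n(\Delta_X)\to{\rm CH}^n(X\times X)\to{\rm CH}^n(X\times X\setminus\Delta_X)\to 0$ together with the fact that $\Delta_X$ is irreducible of codimension exactly $n$, we get $\mu^*\Gamma_1-\Sigma=a\,\Delta_X$ in ${\rm CH}^n(X\times X)$ for some $a\in\mathbb{Z}$. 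By Lemma \ref{le1} we may write $\Delta_X=\mu^*Z$ for a codimension $n$ cycle $Z$ on $X^{[2]}$, hence $\Sigma=\mu^*\Gamma_1-a\,\mu^*Z=\mu^*(\Gamma_1-aZ)$, which gives the assertion with $\Gamma=\Gamma_1-aZ$.

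The one genuinely delicate point — and the reason one cannot simply ``quotient the cycle $\tau^*\Sigma$ by the involution $\iota$'' directly — is the ramification of $r$ along $E_\Delta$, equivalently the branching of $\mu$ along the diagonal: integral descent of invariant cycles is clean only over the \'etale locus $U_0\to V_0$, and the remaining ambiguity, which is necessarily a multiple of $[\Delta_X]$, is exactly what Lemma \ref{le1} allows one to realize as a pull-back from $X^{[2]}$.
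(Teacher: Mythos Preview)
Your proof is correct and follows essentially the same route as the paper: both arguments split a symmetric cycle into involution-invariant components and free orbits away from the diagonal (the paper does this by an explicit cycle decomposition via $X^{(2)}$ and proper transforms under the Hilbert--Chow map, you do it by \'etale descent on $U_0\to V_0$ followed by Zariski closure and the localization sequence), and both then invoke Lemma~\ref{le1} to absorb the residual multiple of $\Delta_X$. The only cosmetic difference is that your use of localization replaces the paper's hands-on bookkeeping of components, but the underlying idea is identical.
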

\begin{proof} Indeed, we can write $Z=Z_1+Z_2$ where $Z_1$ is a combination of
irreducible subvarieties of $X\times X$ invariant under the
involution $i$ of $X\times X$, and $Z_2$ is of the form
$Z'_2+i(Z'_2)$, where the diagonal does not appear in $Z'_2$. Write
$Z_1=n_1\Delta_X+Z'_1$, with $Z'_1=\sum_jn_j Z'_{1,j}$, the $Z'_j$
being invariant under $i$ but different from $\Delta_X$. Then
$Z'_{1,j}$ is the inverse image of a subvariety $Z''_{1,j}$ of
$X^{(2)}$. Let $\widetilde{Z''_{1,j}}$ be the proper transform of
 $Z''_{1,j}$ under the Hilbert-Chow map
$X^{[2]}\rightarrow X^{(2)}$. Then clearly
$\mu^*(\widetilde{Z''_{1,j}})=Z'_{1,j}$ in ${\rm CH}^n(X\times X)$
so
$$\mu^*(\sum_j n_j\widetilde{Z''_{1,j}})=Z'_1\,\,{\rm in}\,\,{\rm CH}^n(X\times X).$$
Next, let $\overline{Z'_2}$ be the image of $Z'_2$ in $X^{[2]}$ by
$\mu$. Then clearly
$$\mu^*(\overline{Z'_2})=Z'_2+i(Z'_2)=Z_2\,\,{\rm in}\,\,{\rm CH}^n(X\times X).$$
Thus
$$Z=n_1\Delta_X+Z'_1+Z_2=n_1\Delta_X+\mu^*(\sum_jn_j\widetilde{Z''_{1,j}})+\mu^*(\overline{Z'_2}).$$
 Finally, we use Lemma \ref{le1} to
conclude.
\end{proof}
We have next:
\begin{lemm} \label{lesym}Suppose $X$ admits a cohomological decomposition of the diagonal
\begin{eqnarray}
\label{eqdecomp8juin} [\Delta_X-x\times X]=[Z]\,\,{\rm in}\,\,H^{2n}(X\times X,\mathbb{Z}),
\end{eqnarray}
where $Z$ is a cycle supported on $D\times X$ for some proper closed
algebraic subset $D$ of $X$. Then $X$ admits a cohomological
decomposition of the diagonal
\begin{eqnarray}
\label{eqdecomp8juin2} [\Delta_X-x\times X-X\times x]=[W]\,\,{\rm in}\,\,H^{2n}(X\times X,\mathbb{Z}),
\end{eqnarray}
where $W$ is  a cycle supported on $D\times X$ for some
$D\subsetneqq X$, and $W$ is invariant under the involution $i$.
\end{lemm}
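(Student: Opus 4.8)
The plan is to separate the formal part — peeling off the two constant cycles — from the essential part, which is producing the $i$-invariant representative.

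\emph{Reduction.} After enlarging $D$ we may assume $x\in D$, so that $x\times X$ is supported on $D\times X$; subtracting it from $Z$ gives a cycle $W_0:=Z-x\times X$ supported on $D\times X$ with
\[[\Delta_X-x\times X-X\times x]=[W_0]\quad\text{in }H^{2n}(X\times X,\mathbb{Z}).\]
Denote this class by $\delta$. Since $\Delta_X$ is $i$-invariant and $i$ exchanges $x\times X$ with $X\times x$, we have $i_*\delta=\delta$; in particular $i_*W_0$ is supported on $X\times D$ and also has class $\delta$. It therefore suffices to produce an $i$-invariant cycle $W$ with $[W]=\delta$ supported on a set of the form $D'\times D'$ with $D'$ proper (such a $W$ is then in particular supported on $D'\times X$, which is the asserted decomposition).

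\emph{The main step.} The obvious candidate $\tfrac12(W_0+i_*W_0)$ has class $\delta$ and is $i$-invariant, but is not an integral cycle and is not supported over a proper subset, so I would first locate $\delta$ cohomologically. Because $W_0$ is supported on $D\times X$ with $\dim D=n-1$, the bidegree-$(j,2n-j)$ K\"unneth component of $\delta$ vanishes for $j\leq 1$; by $i_*\delta=\delta$ it also vanishes for $j\geq 2n-1$; and for $2\leq j\leq 2n-2$ this component lies in $\mathrm{Im}\bigl(H^{j-2}(\widetilde D)\to H^{j}(X)\bigr)\otimes H^{2n-j}(X)$ (because $\delta$ is supported on $D\times X$) and simultaneously, by the symmetry $i_*\delta=\delta$, in $H^{j}(X)\otimes\mathrm{Im}\bigl(H^{2n-j-2}(\widetilde D)\to H^{2n-j}(X)\bigr)$; the intersection of these two subspaces is $\mathrm{Im}(\widetilde{j}_*)\otimes\mathrm{Im}(\widetilde{j}_*)$. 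Hence $\delta$ is cohomologically supported on the $i$-invariant proper closed subset $D\times D$. It remains to upgrade this to an actual $i$-invariant algebraic cycle supported on $D\times D$ with class $\delta$: for this one uses that $\delta$ is already represented by the algebraic cycle $W_0$ supported on $D\times X$, together with the localization sequence for Chow groups and, for the invariance, the pull-back $\mu^{*}$ from $X^{[2]}$ (Corollary \ref{corosympull}).

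\emph{Where the difficulty is.} The one genuinely non-formal point is this last upgrade — passing from ``$\delta$ is cohomologically supported on $D\times D$'' to ``$\delta=[W]$ for an $i$-invariant algebraic cycle $W$ supported on $D\times D$'' — carried out over $\mathbb{Z}$, i.e.\ without the factor of two that naive averaging of $W_0$ with $i_*W_0$ would cost. Over $\mathbb{Q}$ the statement is immediate from Bloch--Srinivas and symmetrization; the content of the lemma is exactly this integral refinement, and it is the Hilbert-scheme geometry — the honest degree-two quotient $r\colon\widetilde{X\times X}\to X^{[2]}$ — that makes it go through.
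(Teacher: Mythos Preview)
Your proposal has a genuine gap precisely where you locate it yourself. The ``upgrade'' from the cohomological statement ``$\delta$ is supported on $D\times D$'' to an honest $i$-invariant algebraic cycle supported on $D\times X$ is not carried out: you invoke the localization sequence and Corollary~\ref{corosympull}, but that corollary produces symmetric cycles on all of $X\times X$, not cycles supported over a proper subset, so it does not by itself give what you need. Moreover, your K\"unneth analysis tacitly assumes $H^*(X,\mathbb{Z})$ is torsion-free, which Lemma~\ref{lesym} does not assume. In short, you have reduced the lemma to a statement at least as hard as the lemma itself, and then asserted that the Hilbert-scheme geometry handles it without saying how.

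The paper's proof bypasses all of this with a single line: take $W:={}^tZ\circ Z$. Since ${}^t({}^tZ\circ Z)={}^tZ\circ Z$, this cycle is $i$-invariant on the nose; since $Z$ is supported on $D\times X$, the composition is again supported on $D\times X$ (in fact on $D\times D$); and computing $({}^tZ)\circ Z$ cohomologically via $[\Delta_X-X\times x]\circ[\Delta_X-x\times X]$ gives exactly $[\Delta_X-x\times X-X\times x]$ for $n>0$. No K\"unneth, no torsion hypothesis, no Hilbert scheme --- the Hilbert-scheme geometry only enters later, in Proposition~\ref{leGamma}, not in this lemma. The moral is that composition of correspondences is the natural way to symmetrize here, not averaging.
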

\begin{proof} Let us denote by $^t\Gamma$ the image of a cycle $\Gamma$ under the involution $i$ of $X\times X$.
Formula (\ref{eqdecomp8juin}) gives as well
$$ [\Delta_X-X\times x]=[^tZ]\,\,{\rm in}\,\,H^{2n}(X\times X,\mathbb{Z}),$$
and
\begin{eqnarray}
\label{eqdecomp8juin3}[(\Delta_X-X\times x)\circ (\Delta_X-x\times X)]=[^tZ\circ Z]\,\,{\rm in}\,\,H^{2n}(X\times X,\mathbb{Z}).
\end{eqnarray}
The cycle $^tZ\circ Z$ is invariant under the involution and
supported on $D\times X$. On the other hand, the left hand side in
(\ref{eqdecomp8juin3}) is equal to $[\Delta_X-X\times x-x\times X]$
(we assume here $n>0$).
\end{proof}
The following proposition is a key point in our proof of Theorem
\ref{theoeqchcohdec}.
\begin{prop} \label{leGamma} Let
$X$ be a smooth odd degree  complete intersection  in projective
space. If $X$ admits a cohomological decomposition of the diagonal,
and $H^{2*}(X,\mathbb{Z} )/H^{2*}(X,\mathbb{Z})_{alg}$ has no $2$-torsion, there
exists a cycle $\Gamma\in {\rm CH}^n(X^{[2]})$ with the following
properties:

(i) $\mu^*\Gamma= \Delta_X-x\times X-X\times x-W$ in ${\rm
CH}^n(X\times X)$, with $W$ supported over $D\times X$, for some
closed algebraic subset $D\subsetneqq X$.

(ii) $[\Gamma]=0$  in $H^{2n}(X^{[2]},\mathbb{Z})$.
\end{prop}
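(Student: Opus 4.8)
The plan is to combine Lemma \ref{lesym} and Corollary \ref{corosympull} to get a \emph{symmetric} cycle on $X^{[2]}$ pulling back to the right thing, and then to correct its cohomology class to zero by subtracting a further algebraic cycle that dies under $\mu^*$. First I would apply Lemma \ref{lesym} to the given cohomological decomposition of the diagonal: this produces a cycle $W_0$ on $X\times X$, invariant under the involution $i$, supported on $D\times X$ for some $D\subsetneqq X$, with
\begin{eqnarray}
\label{eqplanstep1}
[\Delta_X-x\times X-X\times x-W_0]=0\,\,\hbox{in}\,\,H^{2n}(X\times X,\mathbb{Z}).
\end{eqnarray}
Since $\Delta_X$, $x\times X$, $X\times x$ and $W_0$ are all symmetric (up to rational equivalence), the cycle $Z:=\Delta_X-x\times X-X\times x-W_0$ is a symmetric codimension $n$ cycle, so by Corollary \ref{corosympull} there is $\Gamma_0\in {\rm CH}^n(X^{[2]})$ with $\mu^*\Gamma_0=Z$ in ${\rm CH}^n(X\times X)$. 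This already gives property (i); the remaining work is to arrange property (ii), namely $[\Gamma_0]=0$ in $H^{2n}(X^{[2]},\mathbb{Z})$.

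The key observation for (ii) is that $\mu^*$ is, up to a factor of $2$ coming from the generic degree of $\widetilde{X\times X}\to X^{[2]}$, compatible with intersection pairings, so $\mu_*\mu^*=2\cdot(\hbox{id})$ on the $i$-invariant part. Concretely, $r^*:H^*(X^{[2]},\mathbb{Z})\to H^*(\widetilde{X\times X},\mathbb{Z})$ is injective (it is split by $\frac12 r_*$ after inverting $2$, but more carefully one uses that $X^{[2]}$ has no odd cohomology in the relevant range and one can control torsion), and $\tau^*[\Gamma_0]$-type classes are computed on the blow-up. So from \eqref{eqplanstep1} we learn that $2[\Gamma_0]$, or rather the image of $[\Gamma_0]$ under $r^*$ composed with the blow-down comparison, is zero in $H^{2n}(X\times X,\mathbb{Z})$; hence $[\Gamma_0]$ is a $2$-torsion class in $H^{2n}(X^{[2]},\mathbb{Z})$ modulo the part of the cohomology of $X^{[2]}$ coming from $X\times X$ via $\mu$. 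This is exactly where the hypotheses enter: $X$ is an odd-degree complete intersection, so $H^*(X,\mathbb{Z})$ is torsion-free and concentrated (outside the middle) in algebraic classes, which lets one describe $H^{2n}(X^{[2]},\mathbb{Z})$ explicitly (Künneth on $X\times X$, the exceptional divisor $E_\Delta$, and the class $\delta$ with $r^*\delta=E_\Delta$ from Lemma \ref{le1}); and the assumption that $H^{2*}(X,\mathbb{Z})/H^{2*}(X,\mathbb{Z})_{alg}$ has no $2$-torsion forces the potential $2$-torsion obstruction to vanish, or to be killed by a genuinely algebraic class.

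Putting this together: one shows $[\Gamma_0]$ lies in the subgroup of $H^{2n}(X^{[2]},\mathbb{Z})$ generated by algebraic classes (the pieces $\delta^j$, classes pulled back from $X\times X$, and products of algebraic classes on the two factors), using the no-$2$-torsion hypothesis to rule out any transcendental or non-algebraic torsion contribution. Then I subtract from $\Gamma_0$ an explicit algebraic cycle $\Gamma_1\in{\rm CH}^n(X^{[2]})$ representing $[\Gamma_0]$, chosen so that $\mu^*\Gamma_1$ is again supported over $D'\times X$ for some $D'\subsetneqq X$ — this is possible because $\mu^*\delta^n=\pm\Delta_X$ by Lemma \ref{le1} can be absorbed, while the other generators of $H^{2*}(X,\mathbb{Z})_{alg}$-type classes on $X^{[2]}$ restrict, under $\mu^*$, to cycles of the form $(\hbox{cycle of positive codimension on the first factor})\times X$ plus their transposes, hence supported over a proper closed subset. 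Set $\Gamma:=\Gamma_0-\Gamma_1$; then $[\Gamma]=0$ and $\mu^*\Gamma=\Delta_X-x\times X-X\times x-W$ with $W=W_0+\mu^*\Gamma_1$ still supported over $D\cup D'\subsetneqq X$, which is (i) and (ii).

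The main obstacle, and the place where the hypotheses are genuinely used, is the torsion bookkeeping in $H^{2n}(X^{[2]},\mathbb{Z})$: one must show that the only obstruction to lifting the vanishing \eqref{eqplanstep1} from $X\times X$ back to $X^{[2]}$ with \emph{integral} coefficients is a $2$-torsion class, and that this class is algebraic (equivalently zero in $H^{2*}/H^{2*}_{alg}$). This requires an explicit, careful computation of the integral cohomology ring of $X^{[2]}$ in middle-ish degree in terms of that of $X$ — invoking that for odd-degree complete intersections the odd cohomology is concentrated in the middle and the even cohomology is torsion-free and algebraic away from the middle — together with the description of $r^*$, $r_*$ and the non-effective divisor $\delta$; the references \cite{acta}, \cite{milgram}, \cite{totaro} on the integral cohomology of symmetric products and Hilbert schemes are presumably what make this step work.
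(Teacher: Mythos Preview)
Your opening two steps --- apply Lemma \ref{lesym}, then Corollary \ref{corosympull} to produce $\Gamma_0\in{\rm CH}^n(X^{[2]})$ with $\mu^*\Gamma_0=\Delta_X-x\times X-X\times x-W_0$ --- are exactly what the paper does. The divergence is in how you propose to kill $[\Gamma_0]$, and there your argument has a genuine gap.

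You write that $[\Gamma_0]$ is ``a $2$-torsion class \ldots modulo the part of the cohomology of $X^{[2]}$ coming from $X\times X$ via $\mu$'', and that it ``lies in the subgroup generated by algebraic classes (the pieces $\delta^j$, classes pulled back from $X\times X$, \ldots)''. But there is no map $X^{[2]}\to X\times X$, so ``pulled back from $X\times X$'' does not make sense here; and knowing $[\Gamma_0]$ is algebraic is not enough: you must find an algebraic representative $\Gamma_1$ whose $\mu^*$-pullback is supported over some $D'\times X$, and a generic algebraic class on $X^{[2]}$ has no reason to satisfy this. The paper does not argue abstractly. From $\mu^*[\Gamma_0]=0$ one gets that $r^*[\Gamma_0]$ is supported on the exceptional divisor, hence $r^*[\Gamma_0]=i_{E*}\beta$ for some $\beta\in H^{2n-2}(E_\Delta,\mathbb{Z})$, and then $j_{E*}\beta=2[\Gamma_0]$. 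The entire technical content is Lemma \ref{lecox2CI}: a precise characterization of which classes $\beta$ on $E_\Delta\cong\mathbb{P}(T_X)$ satisfy $j_{E*}\beta\in 2H^*(X^{[2]},\mathbb{Z})$, namely $\beta\equiv\sum\alpha_{i,m}\delta_E^i\,\tau_{E,X}^*h^m(\tau_{E,X}^*h-\delta_E)^m$ mod $2$. This yields $[\Gamma_0]=\sum_{i+2m=n-1}\alpha_i\delta^{i+1}[\Sigma_m^{[2]}]+j_{E*}\gamma$ after using that $H^*(X^{[2]},\mathbb{Z})$ has no $2$-torsion (Totaro). The hypothesis on $H^{2*}/H^{2*}_{alg}$ is used in exactly one place: $\beta$ is algebraic, hence $2\gamma$ is algebraic, hence $\gamma$ is algebraic. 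The correction cycles are then the explicit $\delta^{i+1}\cdot\Sigma_m^{[2]}$ (for $m>0$ these satisfy $\mu^*(\delta^{i+1}\Sigma_m^{[2]})$ supported over $\Sigma_m\times X$) and $j_{E*}\Gamma'$ with $\tau_{E*}\Gamma'=0$ (so $\mu^*j_{E*}\Gamma'=0$). Your sketch does not isolate $\beta$, does not have Lemma \ref{lecox2CI}, and does not produce the cycles $\Sigma_m^{[2]}$; without these, the assertion that a suitable $\Gamma_1$ exists is unjustified.
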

The proof of this proposition will use a few lemmas concerning the
cohomology of $X^{[2]}$, when $X$ is the projective space or a
smooth complete intersection in projective space. We give here an elementary and purely algebraic proof. These results
can be obtained as well as an application of \cite{acta} or
\cite{milgram} (cf. \cite{totaro}), but these papers are written in a topologist's
language and it is not obvious how to translate them into the concrete
statements below. With a better understanding of these papers, our
 arguments would presumably prove Proposition \ref{leGamma} for a smooth projective variety
 $X$ such that $H^*(X,\mathbb{Z})$ is torsion free and $H^{2*}(X,\mathbb{Z} )/H^{2*}(X,\mathbb{Z})_{alg}$ has no $2$-torsion.

 Let $X$ be a smooth projective variety and let
$$j_{E,X}:E_{\Delta,X}\hookrightarrow X^{[2]},\,\,i_{E,X}:E_{\Delta,X}\rightarrow \widetilde{X\times X},\,\,\tau_{E,X}:E_{\Delta,X}\rightarrow X,$$
be respectively the inclusion of the exceptional divisor over the
diagonal in $X^{[2]}$, its inclusion in $\widetilde{X\times X}$, and its
natural morphism to $X$. Let $\delta\in {\rm Pic}\,X^{[2]}$ be the
natural divisor such that $2\delta=E$. Then $\delta_E:=\delta_{\mid
E_{\Delta,X}}$ is the line bundle $\mathcal{O}_{E_{\Delta,X}}(-1)$ of the projective bundle
$\tau_{E,X}:E_{\Delta,X}\cong \mathbb{P}(T_X)\rightarrow X$.
\begin{lemm} \label{lecox2projspace} Assume $X=\mathbb{P}^n$ and let $h=c_1(\mathcal{O}_{\mathbb{P}^n}(1))$.
 For any   cohomology class $a\in H^*(E_{\Delta,\mathbb{P}^n},\mathbb{Z})$, one has
 $(j_{E,\mathbb{P}^n})_*a\in 2H^{*+2}((\mathbb{P}^n)^{[2]},\mathbb{Z})$
if and only if \begin{eqnarray}
\label{eq13}
a=\sum_{i\geq 0, m\geq
0}\alpha_{i,m}\delta_E^i\cdot \tau_{E,\mathbb{P}^n}^*h^m\cdot (\tau_{E,\mathbb{P}^n}^*h-\delta_E)^m\,\,{\rm
mod.} \,\,2H^*(E_{\Delta,\mathbb{P}^n},\mathbb{Z}),
\end{eqnarray}
where the $\alpha_{i,m}$ are integers.
\end{lemm}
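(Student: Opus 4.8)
<br>

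The plan is to compute the integral cohomology of $(\mathbb{P}^n)^{[2]}$ explicitly enough to understand the cokernel of the Gysin map $(j_{E,\mathbb{P}^n})_*$ modulo $2$. First I would recall the standard description: $(\mathbb{P}^n)^{[2]} = r_*(\widetilde{\mathbb{P}^n \times \mathbb{P}^n})$, and since $\tau: \widetilde{\mathbb{P}^n\times\mathbb{P}^n}\to \mathbb{P}^n\times\mathbb{P}^n$ is a blow-up along the diagonal, its cohomology is $H^*(\mathbb{P}^n\times\mathbb{P}^n,\mathbb{Z})\oplus (\text{contribution of } E_\Delta)$ by the blow-up formula, with $H^*(\mathbb{P}^n\times\mathbb{P}^n,\mathbb{Z}) = \mathbb{Z}[h_1,h_2]/(h_1^{n+1},h_2^{n+1})$. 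Then $H^*((\mathbb{P}^n)^{[2]},\mathbb{Z})$ is the $i$-invariant part of this, together with the fact that $E_\Delta = 2\delta$ in $\mathrm{Pic}$; the $i$-invariants of the first summand are spanned by the symmetric functions $\sigma_1 = h_1+h_2$, $\sigma_2 = h_1 h_2$, i.e. by $h := \tau_E^*h$ (pulled back from $\mathbb{P}^n$ via the symmetric map, where $h$ is the polarization on the symmetric product, i.e.\ the class restricting to $\tau_{E,\mathbb{P}^n}^*h$ on $E$) and $\delta$. The key numerical input is that $E_{\Delta,\mathbb{P}^n} = \mathbb{P}(T_{\mathbb{P}^n})$ with $\delta_E = \mathcal{O}_{\mathbb{P}(T_{\mathbb{P}^n})}(-1)$, so $H^*(E_{\Delta,\mathbb{P}^n},\mathbb{Z})$ is a free module over $H^*(\mathbb{P}^n,\mathbb{Z})$ with basis $1,\delta_E,\dots,\delta_E^{n-1}$, subject to the Grothendieck relation $\sum_{k}(-1)^k c_k(T_{\mathbb{P}^n})\,\delta_E^{n-k} = 0$ — and, crucially, $c_k(T_{\mathbb{P}^n}) = \binom{n+1}{k}h^k$, while the tangent–normal bundle exact sequence on $E_\Delta$ identifies $c(N_{E_\Delta/(\mathbb{P}^n)^{[2]}})$, giving that the "other" Chern root direction is $\tau_E^*h - \delta_E$.

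The heart of the argument is a self-intersection/adjunction computation: for $a \in H^*(E_{\Delta,\mathbb{P}^n},\mathbb{Z})$, the class $(j_{E,\mathbb{P}^n})_*a \in H^{*+2}((\mathbb{P}^n)^{[2]},\mathbb{Z})$, and one wants to decide when it lies in $2H^{*+2}$. I would proceed as follows. Because $(\mathbb{P}^n)^{[2]}$ has torsion-free cohomology (which follows from the blow-up formula and the fact that $\mathbb{P}^n\times\mathbb{P}^n$ does), divisibility by $2$ in cohomology can be tested by pairing against a basis of the dual (Poincaré-dual) lattice. The cohomology of $(\mathbb{P}^n)^{[2]}$ decomposes as the image of $H^*(\mathbb{P}^n\times\mathbb{P}^n)^{i}$ plus the part coming from $E_\Delta$; using the projection formula, $j_E^*(j_E)_* a = \delta_E \cdot a$ on $E_\Delta$, and $j_E^*$ of a class pulled back from $(\mathbb{P}^n\times\mathbb{P}^n)/i$ restricts to a polynomial in $\tau_E^*h$ and $\delta_E$. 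So the obstruction to $(j_E)_*a \in 2H^*$ is controlled entirely by the values, modulo $2$, of $\langle (j_E)_* a, \xi\rangle_{(\mathbb{P}^n)^{[2]}}$ for $\xi$ running over a $\mathbb{Z}$-basis, and each such pairing equals either an intersection number computed on $E_\Delta$ of $a$ with a monomial in $\delta_E$ and $\tau_E^*h$, or a pushforward to $\mathbb{P}^n\times\mathbb{P}^n$. Working this out with the Grothendieck relation $c(T_{\mathbb{P}^n}) = (1+h)^{n+1}$, the monomials $\delta_E^i\,\tau_E^*h^m\,(\tau_E^*h - \delta_E)^m$ are exactly the ones for which $(j_E)_*$ of them comes (up to a factor $2$) from the "bulk" part $H^*(\mathbb{P}^n\times\mathbb{P}^n)^i$ — indeed $(j_E)_*(\tau_E^*h^m(\tau_E^*h-\delta_E)^m)$ should be expressible via $\delta^? \cdot (\text{class from } (\mathbb{P}^n\times\mathbb{P}^n))$ plus something $2$-divisible, because $\tau_E^*h \cdot (\tau_E^*h - \delta_E)$ is the restriction to $E$ of $h_1 h_2$ (the symmetric class $\sigma_2$), and $h_1 h_2$ extends over all of $(\mathbb{P}^n)^{[2]}$.

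Concretely I expect the "if" direction to be the easy one: each listed monomial pushes forward, modulo $2$, to a restriction of an integral class on $(\mathbb{P}^n)^{[2]}$, so $(j_E)_*a \in 2H^*$ for such $a$. The "only if" direction is the real content: I would argue that the classes $\{\delta_E^i \cdot \tau_E^*h^m \cdot (\tau_E^*h - \delta_E)^m \bmod 2\}$ span the mod-$2$ kernel of $a \mapsto (j_E)_*a \bmod 2$ inside $H^*(E_{\Delta,\mathbb{P}^n},\mathbb{Z}/2)$ by a dimension count: using the $H^*(\mathbb{P}^n,\mathbb{Z}/2)$-module structure with basis $1,\dots,\delta_E^{n-1}$, the full $\mathbb{Z}/2$-cohomology of $E_\Delta$ in degree $2(*)$ has a known rank, the listed monomials are visibly linearly independent mod $2$ (they are triangular in the $\delta_E$-degree once expanded), and the rank of the image of $(j_E)_*$ mod $2$ is pinned down by the blow-up formula for $H^*((\mathbb{P}^n)^{[2]},\mathbb{Z}/2)$ and the observation that $\delta$ and $E = 2\delta$ differ mod $2$. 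Matching these two counts forces the listed classes to be precisely the kernel. The main obstacle will be the bookkeeping in this rank computation — tracking how the Grothendieck relation $\sum_k (-1)^k\binom{n+1}{k} h^k \delta_E^{n-k} = 0$ interacts with reduction mod $2$ (note $\binom{n+1}{k}$ can be even or odd depending on $n,k$ via Lucas' theorem), and making sure the generators are stated for all degrees simultaneously rather than degree-by-degree. A secondary subtlety is being careful that "$2H^*$" is tested correctly given that $E_\Delta$ is $2$-divisible as a divisor class but individual Gysin pushforwards need not be; I would handle this by always reducing statements to intersection numbers against an explicit integral basis, where divisibility by $2$ is unambiguous.
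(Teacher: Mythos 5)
Your reduction of the statement to a mod-2 computation is sound: since $H^k(Y,\mathbb{Z})/2$ injects into $H^k(Y,\mathbb{Z}/2)$ for any $Y$, the condition $(j_{E,\mathbb{P}^n})_*a\in 2H^{*+2}((\mathbb{P}^n)^{[2]},\mathbb{Z})$ is equivalent to the vanishing of $(j_{E,\mathbb{P}^n})_*\bar a$ in mod~2 cohomology, and the ``if'' direction amounts to checking that each monomial $\delta_E^i\,\tau_{E}^*h^m(\tau_{E}^*h-\delta_E)^m$ is, mod~2, restricted from $(\mathbb{P}^n)^{[2]}$, since then $(j_E)_*j_E^*b=2\delta\cdot b$. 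But the engine you propose for the hard direction --- computing $H^*((\mathbb{P}^n)^{[2]})$, integrally or mod~2, as the $i$-invariant part of the cohomology of the blow-up $\widetilde{\mathbb{P}^n\times\mathbb{P}^n}$, and deducing torsion-freeness of the quotient from torsion-freeness of the cover --- is not valid at the prime $2$, and the prime $2$ is the only thing that matters here. Cohomology of a quotient by an involution equals the invariants only after inverting $2$; integrally, $r^*$ identifies $H^*((\mathbb{P}^n)^{[2]},\mathbb{Z})$ with a sublattice of the invariants whose index is a power of $2$, and determining that index degree by degree is exactly the $2$-divisibility question you are trying to answer, so the argument is circular. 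Already for $n=1$ one has $(\mathbb{P}^1)^{[2]}=\mathbb{P}^2$, where the square of the hyperplane class generates $H^4(\mathbb{P}^2,\mathbb{Z})$, while in the invariant ring $(h_1+h_2)^2=2h_1h_2$ is twice a generator: the invariant subring computes the wrong lattice, off precisely by factors of $2$. Consequently the step ``the rank of the image of $(j_E)_*$ mod $2$ is pinned down by the blow-up formula'' has no content as stated, the dimension count for the kernel cannot be completed, and you never verify the essential point that no integral class on $(\mathbb{P}^n)^{[2]}$ restricts to $\tau_E^*h$ modulo $2$, which is what prevents the kernel from being larger than your list.

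The missing structural input, which is how the paper actually argues, is that $(\mathbb{P}^n)^{[2]}$ is globally a $\mathbb{P}^2$-bundle: $(\mathbb{P}^n)^{[2]}\cong \mathbb{P}(S^2\mathcal{E})=P_2\rightarrow G(2,n+1)$, with $E_{\Delta,\mathbb{P}^n}\cong P=\mathbb{P}(\mathcal{E})$ embedded by the relative Veronese, so that $j_{E,\mathbb{P}^n}^*H_2=2H$. This gives the integral cohomology of both spaces by the projective bundle formula (in particular torsion-freeness, with no appeal to invariants), and the congruences $\delta_E\equiv\pi_1^*l$ and $\tau_E^*h\cdot(\tau_E^*h-\delta_E)\equiv\pi_1^*c$ mod $2$ show that your list of monomials is exactly $\pi_1^*H^*(G(2,n+1),\mathbb{Z}/2)$; necessity then follows by pairing $(j_{E,\mathbb{P}^n})_*(H\cdot\pi_1^*z)$ against $\pi_2^*H^*(G(2,n+1),\mathbb{Z})$ and invoking Poincar\'e duality on the Grassmannian. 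Some such global description (or an equivariant computation as in the topological references cited in the paper) must be supplied before your pairing and rank-count strategy can run; as written, the proposal stops short of the actual content of the lemma.
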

\begin{proof} To see that the condition is sufficient (and also to get a nice interpretation
of the condition), we observe that for any smooth subvariety
$\Sigma\subset \mathbb{P}^n$ of codimension $m$, we have the inclusion
$\Sigma^{[2]}\subset (\mathbb{P}^n)^{[2]}$ and  the class
$b:=[\delta\cdot \Sigma^{[2]}]\in H^{4m+2}((\mathbb{P}^n)^{[2]},\mathbb{Z})$ satisfies
$2b=[E_{\Delta,\mathbb{P}^n}
\cdot \Sigma^{[2]}]$, so that
$$2b=(j_{E,\mathbb{P}^n})_*([ \Sigma^{[2]}]_{\mid E_{\Delta,\mathbb{P}^n}})\,\,{\rm in}\,\,
H^{4m+2}((\mathbb{P}^n)^{[2]},\mathbb{Z}).$$
Next we observe that $\Sigma^{[2]}\cap
E_{\Delta,\mathbb{P}^n}$ is equal to $\mathbb{P}(T_\Sigma)\subset \mathbb{P}(T_{\mathbb{P}^n})$. Take
now for $\Sigma$ a $\mathbb{P}^{n-m}\subset \mathbb{P}^n$. Then
the class of $ \mathbb{P}(T_\Sigma)\subset  \mathbb{P}(T_{\mathbb{P}^n})$ is
$\tau_{E,\mathbb{P}^n}^*h^m\cdot (\tau_{E,\mathbb{P}^n}^*h-\delta_E)^m$. We thus proved that
$(j_{E,\mathbb{P}^n})_*(\tau_{E,\mathbb{P}^n}^*h^m\cdot (\tau_{E,\mathbb{P}^n}^*h-\delta_E)^m)$ is divisible by $2$ in
$H^*((\mathbb{P}^n)^{[2]},\mathbb{Z})$, and so is the class $$
(j_{E,\mathbb{P}^n})_*(\delta_E^i\cdot\tau_{E,\mathbb{P}^n}^*h^m\cdot (\tau_{E,\mathbb{P}^n}^*h-\delta_E)^m)=\delta^i\cdot(j_{E,\mathbb{P}^n})_*(\tau_{E,\mathbb{P}^n}^*h^m\cdot (\tau_{E,\mathbb{P}^n}^*h-\delta_E)^m)
$$  for any $i\geq 0$.

In the other direction, it is better to see $(\mathbb{P}^n)^{[2]}$ as a
$\mathbb{P}^{2}$-bundle over the Grassmannian $G(2,n+1)$, namely, if
$\pi_1: P\rightarrow G(2,n+1)$ is the universal
$\mathbb{P}^{1}$-bundle, it is clear that
$(\mathbb{P}^n)^{[2]}$ is isomorphic to the second symmetric
product $\pi_2: P_2\rightarrow G(2,n+1)$ of $P$ over $G(2,n+1)$.
Furthermore, $E_{\Delta,\mathbb{P}^n}\subset (\mathbb{P}^n)^{[2]}$ identifies with the Veronese embedding
$P\subset P_2$. Write $P=\mathbb{P}(\mathcal{E})$ with polarization
$H=\tau_{E,\mathbb{P}^n}^*h$; then $P_2=\mathbb{P}(S^2\mathcal{E})$ with
polarization $H_2$, and clearly \begin{eqnarray}
\label{eqdu9sep1}
j_{E,\mathbb{P}^n}^*H_2=2H\,\,{\rm in}\,\,H^2(P,\mathbb{Z})
\end{eqnarray}
since $j_{E,\mathbb{P}^n}:P\rightarrow P_2$ is the Veronese embedding.
The cohomology of $P$ decomposes as
$$H^*(P,\mathbb{Z})=\pi_1^*H^*(G(2,n+1),\mathbb{Z})\oplus H\cdot
\pi_1^*H^{*-2}(G(2,n+1),\mathbb{Z}).$$
We claim that modulo $2$, the set of classes $a$ as in (\ref{eq13})
is exactly $\pi_1^*H^*(G(2,n+1),\mathbb{Z}/2\mathbb{Z})$.
Let
$l=c_1(\mathcal{E}),\,c=c_2(\mathcal{E})$ be the two generators of
$H^*(G(2,n+1),\mathbb{Z})$. It is easy to check that
$\delta=H_2-\pi_2^*l$ in $H^2(P_2,\mathbb{Z})$. Restricting this equality to $P$,  we get by (\ref{eqdu9sep1})
\begin{eqnarray}
\label{eqdu9sep2}\delta_E=\pi_1^*l\,\,{\rm mod.}\,\,
2H^2(P,\mathbb{Z}).
\end{eqnarray}
 Finally we have
$\pi_1^*c=H\cdot (\pi_1^*l-H)$ in $H^*(P,\mathbb{Z})$,
hence we get by (\ref{eqdu9sep2})
\begin{eqnarray}
\label{eqdu9sep3}\pi_1^*c=
\tau_{E,\mathbb{P}^n}^*h\cdot(\tau_{E,\mathbb{P}^n}^*h-\delta_E) \,\,{\rm mod.}\,\,2H^*(P,\mathbb{Z}),
\end{eqnarray}
which together with (\ref{eqdu9sep2}) proves the claim.
 Having this, the fact that condition (\ref{eq13}) is  sufficient tells us that
 $$(j_{E,\mathbb{P}^n})_*\circ\pi_1^*(H^*(G(2,n+1),\mathbb{Z}))\subset
 2H^*(P_2,\mathbb{Z})$$
 and  in order  to prove that it is necessary, we need to prove that the set of classes $z\in H^*(P,\mathbb{Z})$
 such that $(j_{E,\mathbb{P}^n})_*z\in 2H^*(P_2,\mathbb{Z})$ is equal to $\pi_1^*(H^*(G(2,n+1),\mathbb{Z}))$
 modulo $2H^*(P,\mathbb{Z})$.
 Equivalently, we have to prove that
 if
 $z\in  H^*(G(2,n+1),\mathbb{Z})$ satisfies
$(j_{E,\mathbb{P}^n})_*(H\cdot \pi_1^*z)\in 2H^*(P_2,\mathbb{Z})$,
then $z\in 2H^*(G(2,n+1),\mathbb{Z})$. But for any $\alpha\in
H^*(G(2,n+1),\mathbb{Z})$, we have
$$\langle \alpha,z\rangle_{G(2,n+1)}=\langle \pi_1^*\alpha,H\cdot \pi_1^*z\rangle_{P}=
\langle \pi_2^*\alpha,(j_{E,\mathbb{P}^n})_*(H\cdot
\pi_1^*z)\rangle_{P_2},$$ which implies that  $\langle
\alpha,z\rangle_{G(2,n+1)}$ is even since
$(j_{E,\mathbb{P}^n})_*(H\cdot \pi_1^*z)$ is divisible by $2$. Hence
$z$ is divisible by $2$ by Poincar\'e duality on $G(2,n+1)$.
\end{proof}
\begin{lemm} \label{lecox2CI} Let $X\subset \mathbb{P}^N$ be a smooth odd
degree complete intersection of dimension $n$.

(i) For any integer $m\geq0$, the class
$(j_{E,X})_*(\tau_{E,X}^*h^m\cdot (\tau_{E,X}^*h-\delta_E)^m)\in
H^{4m+2}(X^{[2]},\mathbb{Z})$ is equal to $2
\delta\cdot[\Sigma_l^{[2]}]$, where $\Sigma_m\subset X$ is the
smooth proper intersection of $X$ with a linear space of codimension
$m$ in $\mathbb{P}^N$.

(ii)  For any  integral cohomology class $a\in H^{2n-2}(E_{\Delta,X},\mathbb{Z})$,
one has
 $(j_{E,X})_*a\in 2H^{2n}(X^{[2]},\mathbb{Z})$
if and only if \begin{eqnarray} \label{eqvanpourci} a=\sum_{i\geq 0,
m\geq 0, i+2m=n-1}\alpha_{i,m}\delta_E^i \cdot \tau_{E,X}^*h^m\cdot
(\delta_E-\tau_{E,X}^*h)^m\,\,{\rm modulo} \,\,2H^*(E_{\Delta,X},\mathbb{Z}),
\end{eqnarray}
where the $\alpha_{i,m}$'s are integers.

\end{lemm}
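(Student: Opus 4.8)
The plan is to reduce the statement to the analogous result for projective space, established in Lemma~\ref{lecox2projspace}, by using the Lefschetz hyperplane theorem and the relation between the relevant bundles on $X$ and on $\mathbb{P}^N$. First I would prove part (i): the computation is essentially the same as in the proof of Lemma~\ref{lecox2projspace}. Namely, for $\Sigma_m\subset X$ the smooth complete intersection of $X$ with a codimension~$m$ linear space, we have $\Sigma_m^{[2]}\subset X^{[2]}$, and $E_{\Delta,X}\cap \Sigma_m^{[2]}=\mathbb{P}(T_{\Sigma_m})\subset \mathbb{P}(T_X)=E_{\Delta,X}$. Since $2\delta=E_{\Delta,X}$, intersecting with $\Sigma_m^{[2]}$ and projecting gives $(j_{E,X})_*([\Sigma_m^{[2]}]_{\mid E_{\Delta,X}})=2\delta\cdot[\Sigma_m^{[2]}]$. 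Then one identifies $[\mathbb{P}(T_{\Sigma_m})]$ inside $H^*(\mathbb{P}(T_X),\mathbb{Z})=H^*(E_{\Delta,X},\mathbb{Z})$. Using the normal bundle sequence $0\to T_{\Sigma_m}\to T_X|_{\Sigma_m}\to N\to 0$ with $N$ a sum of line bundles of class $h$, one computes the class of the subbundle $\mathbb{P}(T_{\Sigma_m})$ as the top Chern class of the quotient, which works out to $\tau_{E,X}^*h^m\cdot(\tau_{E,X}^*h-\delta_E)^m$ exactly as over $\mathbb{P}^N$; here one uses that $\delta_E=\mathcal{O}_{\mathbb{P}(T_X)}(-1)$ and the projective bundle formula. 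This establishes (i).

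Next I would prove the harder direction, part (ii). The ``if'' direction is immediate from (i): each generator $\delta_E^i\cdot\tau_{E,X}^*h^m\cdot(\delta_E-\tau_{E,X}^*h)^m$ (up to sign $(-1)^m$) pushes forward to $\pm\delta^{i+1}\cdot[\Sigma_m^{[2]}]$, hence lies in $2H^{2n}(X^{[2]},\mathbb{Z})$, and this survives reduction of $a$ modulo $2H^*(E_{\Delta,X},\mathbb{Z})$ because $(j_{E,X})_*(2H^*(E_{\Delta,X},\mathbb{Z}))\subset 2H^*(X^{[2]},\mathbb{Z})$. For the ``only if'' direction, the key inputs are: (a) by the Lefschetz hyperplane theorem and the fact that $X$ has odd degree, the cohomology $H^{2*}(X,\mathbb{Z})$ in degrees different from the middle is generated by powers of $h$, and the middle primitive cohomology $H^n(X,\mathbb{Z})_{prim}$ is the obstruction; (b) the cohomology of $E_{\Delta,X}=\mathbb{P}(T_X)$ splits via the projective bundle formula as $\bigoplus_{j=0}^{n-1}\delta_E^j\cdot\tau_{E,X}^*H^{*-2j}(X,\mathbb{Z})$. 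So I would write an arbitrary $a\in H^{2n-2}(E_{\Delta,X},\mathbb{Z})$ in this form, with coefficients in $H^*(X,\mathbb{Z})$, and analyze when $(j_{E,X})_*a$ is divisible by $2$. For the components where the $H^*(X,\mathbb{Z})$-coefficient is a multiple of a power of $h$, one uses (i) to see the contribution is already even or forces the integer coefficient to be even; the content is that the classes $\delta_E^i\cdot\tau_{E,X}^*h^m\cdot(\tau_{E,X}^*h-\delta_E)^m$ with $i+2m=n-1$ span, modulo~$2$, exactly the ``even-pushforward'' subspace coming from powers of~$h$, which one checks by a Poincaré-duality/intersection-number argument on $X^{[2]}$ as in Lemma~\ref{lecox2projspace}. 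For the components with primitive coefficient $\gamma\in H^n(X,\mathbb{Z})_{prim}$ (necessarily $i=0$, $n-1$ even so $n$ odd — but $n$ is even here since $\deg X$ odd forces... I should be careful: for odd-degree complete intersections of even dimension the primitive middle cohomology can be nonzero), one shows the pushforward of $\tau_{E,X}^*\gamma$ (and of $\delta_E\cdot\tau_{E,X}^*\gamma'$ etc.) is divisible by $2$ automatically, or conversely that such a class cannot contribute an odd-divisibility obstruction, precisely because $H^{2n}(X^{[2]},\mathbb{Z})$ has no $2$-torsion in the relevant piece and the self-intersection pairing on $H^n(X,\mathbb{Z})$ pushed to $X^{[2]}$ is controlled.

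The main obstacle is the ``only if'' direction of (ii), specifically handling the interaction between the primitive middle cohomology $H^n(X,\mathbb{Z})_{prim}$ and the pushforward $j_{E,X,*}$: one must show that no integral class supported on the primitive part of $H^*(E_{\Delta,X},\mathbb{Z})$ can produce an obstruction to divisibility by~$2$ beyond what is already captured by the listed monomials. I expect this to follow by computing, via the projection formula and Poincaré duality on $X^{[2]}$, the pairings $\langle j_{E,X}^*\beta, a\rangle_{E_{\Delta,X}}=\langle \beta, j_{E,X,*}a\rangle_{X^{[2]}}$ for a spanning set of $\beta\in H^*(X^{[2]},\mathbb{Z})$; the point is that $H^*(X^{[2]},\mathbb{Z})$ is torsion-free (the hypothesis that $H^{2*}(X,\mathbb{Z})/H^{2*}(X,\mathbb{Z})_{alg}$ has no $2$-torsion, combined with torsion-freeness of $H^*(X,\mathbb{Z})$ for complete intersections and the structure of $X^{[2]}$ as a quotient of the blow-up, controls the $2$-torsion), so divisibility of $j_{E,X,*}a$ by~$2$ is equivalent to all these intersection numbers being even. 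Reducing modulo~$2$, this becomes a statement in the $\mathbb{F}_2$-cohomology of $E_{\Delta,X}$ about which classes pair to zero with the image of $j_{E,X}^*$, which one resolves using (\ref{eqdu9sep1})-type relations $j_{E,X}^*\delta=-\delta_E+\tau_{E,X}^*h$ (or the appropriate analogue) together with the splitting of $H^*(E_{\Delta,X},\mathbb{F}_2)$; the odd-degree hypothesis on $X$ is what guarantees the relevant Stiefel--Whitney-type classes and the class $\delta$ behave as in the $\mathbb{P}^N$ case.
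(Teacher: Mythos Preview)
Your treatment of (i) and the ``if'' direction of (ii) is correct and matches the paper. The gap is in the ``only if'' direction of (ii). Although you announce a reduction to Lemma~\ref{lecox2projspace}, your actual argument abandons this and instead tries to work directly on $X^{[2]}$ via Poincar\'e duality, computing pairings $\langle j_{E,X}^*\beta,a\rangle$ for $\beta$ ranging over $H^*(X^{[2]},\mathbb{Z})$. This requires knowing the integral cohomology of $X^{[2]}$ well enough to detect divisibility by~$2$, which is precisely what the lemma is meant to feed into, not assume. You also invoke the hypothesis that $H^{2*}(X,\mathbb{Z})/H^{2*}(X,\mathbb{Z})_{alg}$ has no $2$-torsion, but that is a hypothesis of Proposition~\ref{leGamma}, not of this lemma; Lemma~\ref{lecox2CI} is unconditional for odd-degree complete intersections.

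The paper's route is the one you announced but did not carry out: push $a$ forward along $\tilde{\jmath}_{X*}:H^*(\mathbb{P}(T_X),\mathbb{Z}/2)\to H^*(\mathbb{P}(T_{\mathbb{P}^N}),\mathbb{Z}/2)$. The odd-degree hypothesis makes $j_{X*}$, and hence $\tilde{\jmath}_{X*}$, injective on $\mathbb{Z}/2$-cohomology when $n$ is odd; then $(j_{E,\mathbb{P}^N})_*(\tilde{\jmath}_{X*}a)$ is divisible by~$2$, Lemma~\ref{lecox2projspace} applies on the nose, and one pulls back using that $[E_{\Delta,X}]$ in $E_{\Delta,\mathbb{P}^N}$ is itself of the form $\tau_{E,\mathbb{P}^N}^*h^{N-n}(\delta_E-\tau_{E,\mathbb{P}^N}^*h)^{N-n}$ mod~$2$. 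When $n$ is even, $\tilde{\jmath}_{X*}$ has kernel exactly $\delta_E^{n/2-1}\cdot\tau_{E,X}^*H^n(X,\mathbb{Z}/2)_{prim}$, so the argument above leaves an ambiguity of this shape. Your discussion of the primitive part is where the real confusion lies: it is \emph{not} true that such classes push forward to something automatically even. The paper resolves this with a monodromy argument (either every primitive $\alpha$ gives $(j_{E,X})_*(\delta_E^{n/2-1}\tau_{E,X}^*\alpha)\equiv 0$ mod~$2$, or none does) together with the computation $\langle (j_{E,X})_*a,(j_{E,X})_*b\rangle_{X^{[2]}}=-2\langle\alpha,\beta\rangle_X$; if all such pushforwards were even, every $\langle\alpha,\beta\rangle_X$ would be even, contradicting nondegeneracy of the intersection form on $H^n(X,\mathbb{Z}/2)_{prim}$ for odd-degree $X$. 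This monodromy-plus-pairing step is the missing idea in your proposal.
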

\begin{proof} (i) This has been already proved in the case of $\mathbb{P}^N$ and follows from the
fact that $\tau_{E,X}^*h^m\cdot
(\tau_{E,X}^*h-\delta_E)^m\in H^{4m}(E_{\Delta,X},\mathbb{Z})$ is the class of $\mathbb{P}(T_{\Sigma_m})=E_{\Delta,\Sigma_m}$
in $\mathbb{P}(T_X)=E_{\Delta,X}$.

(ii) In the case where $X\stackrel{j_X}{\hookrightarrow
}\mathbb{P}^N$ has odd dimension, observe that the map
$j_{X*}:H^*(X,\mathbb{Z}/2)\rightarrow
H^{*+2k}(\mathbb{P}^N,\mathbb{Z}/2) $, $k:=N-n$, is injective since
$X$ has odd degree. It follows as well that denoting by
$\tilde{\jmath}_X:\mathbb{P}(T_X)\rightarrow
\mathbb{P}(T_{\mathbb{P}^N})$ the natural map,
$\tilde{\jmath}_{X*}:H^*(\mathbb{P}(T_X),\mathbb{Z}/2)\rightarrow
H^{*+4k}(\mathbb{P}(T_{\mathbb{P}^N}),\mathbb{Z}/2)$ is also
injective. Now, let $$a\in
H^{2n-2}(E_{\Delta,X},\mathbb{Z})=H^{2n-2}(\mathbb{P}(T_X),\mathbb{Z})$$
such that $(j_{E,X})_*a\in 2H^{2n}(X^{[2]},\mathbb{Z})$. Then
$\tilde{\jmath}_{X*}a\in H^*(E_{\Delta,\mathbb{P}^N},\mathbb{Z})$
satisfies $(j_{E,\mathbb{P}^N})_*(\tilde{\jmath}_{X*}a)\in
2H^{*}((\mathbb{P}^N)^{[2]},\mathbb{Z})$. Thus we conclude by Lemma
\ref{lecox2projspace} that the class $\tilde{\jmath}_{X*}a$ mod. $2$
belongs to the subgroup of
$H^*(E_{\Delta,\mathbb{P}^N},\mathbb{Z}/2)$ generated by the
$\delta_E^i \cdot\tau_{E,\mathbb{P}^n}^*h^m\cdot
(\delta_E-\tau_{E,\mathbb{P}^n}^*h)^m$ with $i\geq 0,\,m\geq 0$. It
easily  follows that $a$ mod. $2$ belongs to the subgroup of
$H^*(E_{\Delta,X},\mathbb{Z}/2)$ generated by the $\delta_E^i
\cdot\tau_{E,X}^*h^m\cdot (\delta_E-\tau_{E,X}^*h)^m$ with $i\geq
0,\,l\geq 0$, since the class of $E_{\Delta,X}$ in
$E_{\Delta,\mathbb{P}^N}$ is equal modulo $2$ to
$\tau_{E,\mathbb{P}^n}^*h^{N-n}\cdot
(\delta_E-\tau_{E,\mathbb{P}^n}^*h)^{N-n}$.

In the case where $X$ has even dimension, the maps
$j_{X*}:H^{n}(X,\mathbb{Z}/2)\rightarrow
H^{n+2k}(\mathbb{P}^N,\mathbb{Z}/2)$ and
$\tilde{\jmath}_{X*}:H^{2n-2}(\mathbb{P}(T_X),\mathbb{Z}/2) \rightarrow
H^{2n-2+4k}(\mathbb{P}(T_{\mathbb{P}^n}),\mathbb{Z}/2)$
 are no longer injective, but their kernels are equal respectively
 to $H^n(X,\mathbb{Z}/2)_{prim}$ and $\delta_E^{n/2-1}\cdot \tau_{E,X}^*H^n(X,\mathbb{Z}/2)_{prim}$.
 The proof above shows that if
 $a\in H^{2n-2}(E_{\Delta,X},\mathbb{Z})$ satisfies
 $(j_{E,X})_*a\in 2H^{2n}(X^{[2]},\mathbb{Z})$, then
 $a\in \delta_E^{n/2-1}\cdot \tau_{E,X}^*H^n(X,\mathbb{Z}/2)_{prim}$ modulo
 the subgroup of
$H^*(E_{\Delta,X},\mathbb{Z}/2)$ generated by the
$\delta_E^i \cdot\tau_{E,X}^*h^m\cdot
(\delta_E-\tau_{E,X}^*h)^m$ with $i\geq 0,\,m\geq 0$. By (i), it thus suffices to prove
that a class
$a\in \delta_E^{n/2-1}\cdot \tau_{E,X}^*H^n(X,\mathbb{Z}/2)_{prim}$
satisfying $(j_{E,X})_*a=0$ in $H^{2n}(X^{[2]},\mathbb{Z}/2)$ must be $0$.
 This is proved as follows: By a monodromy argument, either any  class
 $a \in \delta_E^{n/2-1}\cdot \tau_{E,X}^*H^n(X,\mathbb{Z}/2)_{prim}$ satisfies
 this property, or no nonzero class satisfies it.
 Assume that the first possibility occurs. Then we get a contradiction as follows:
 Let $$a=\delta_E^{n/2-1}\cdot \tau_{E,X}^* \alpha,\,b=\delta_E^{n/2-1}\cdot \tau_{E,X}^* \beta
 \in H^{2n-2}(E_{\Delta,X},\mathbb{Z}),$$
 with $\alpha,\,\beta\in H^{n}(X,\mathbb{Z})_{prim}$.
 Then
 \begin{eqnarray}
 \label{eq10mai1}\langle j_{\Delta,X*}a,j_{\Delta,X*}b\rangle_{X^{[2]}}=-2\langle \alpha,\beta\rangle_X.
 \end{eqnarray}
If both $(j_{\Delta,X})_*\alpha$ and $(j_{\Delta,X})_*\beta$ are divisible by $2$ in
$H^{2n}(X^{[2]},\mathbb{Z})$, then $\langle (j_{\Delta,X})_*a,(j_{\Delta,X})_*b\rangle_{X^{[2]}}$
is divisible by $4$, hence $\langle \alpha,\beta\rangle_X$ is divisible by
$2$ by (\ref{eq10mai1}). Thus, under our assumption, the intersection pairing modulo $2$
would be identically $0$ on $H^{n}(X,\mathbb{Z}/2)_{prim}$.
 As $X$ has odd degree, the intersection pairing
is nondegenerate on $H^{n}(X,\mathbb{Z}/2)_{prim}$ and we get a contradiction.
\end{proof}
\begin{proof}[Proof of Proposition \ref{leGamma}] Let $X$ be an odd degree complete
intersection in $\mathbb{P}^N$ which admits a cohomological
decomposition of the diagonal. We know by Lemma \ref{lesym} that
there is a symmetric cycle $W$ supported on $D\times X$,
$D\subsetneqq X$, such that $[\Delta_X-x\times X-X\times x]=[W]$ in
$H^{2n}(X\times X)$. Corollary \ref{corosympull} provides
 a cycle $\Gamma_0 \in {\rm CH}^n(X^{[2]})$ such that $\mu^*\Gamma_0= \Delta_X-x\times X-X\times x-W$
 in ${\rm CH}^n(X\times X)$, which is property (i). It remains to see that we
 can modify $\Gamma_0$ keeping property (i) and
 imposing condition (ii), namely
 $$[\Gamma_0]=0\,\,{\rm in}\,\,H^{2n}(X^{[2]},\mathbb{Z}).$$
 We know that $\mu^*[\Gamma_0]=0$ in $H^{2n}(X\times X,\mathbb{Z})$, which implies that
 $r^*[\Gamma_0]$ vanishes in $H^{2n}(\widetilde{X\times X}\setminus E_\Delta,\mathbb{Z})$. Thus there is a
 cohomology class $\beta\in H^{2n-2}(E_\Delta,\mathbb{Z})$ such that
 $$i_{E*}\beta= r^*[\Gamma_0]\,\,{\rm in}\,\,
 H^{2n}(\widetilde{X\times X},\mathbb{Z}),$$
 where we come back to the notation $i_E:E_\Delta\rightarrow \widetilde{X\times X}$,
 $j_E:E_\Delta\rightarrow X^{[2]}$ for the natural inclusions of the exceptional divisor over the diagonal
 of $X$.
 This implies that $j_{E*}\beta$ is divisible by $2$ in
 $H^{2n}(X^{[2]},\mathbb{Z})$. Indeed, we have
 \begin{eqnarray}
 \label{eqpour6mai}j_{E*}\beta=r_*(i_{E*}\beta)=r_*(r^*([\Gamma_0]))=2 [\Gamma_0]\,\,{\rm
 in}\,\,H^{2n}(X^{[2]},\mathbb{Z}).
 \end{eqnarray}
According to Lemma \ref{lecox2CI},(ii),  one has then
\begin{eqnarray}
 \label{eqpour6mai0}
 \beta=\sum_{i\geq
0,i+2m=n-1}\alpha_i\delta_E^i(h-\delta_E)^mh^m+2\gamma\,\,{\rm
in}\,\,H^{2n-2}(E_\Delta,\mathbb{Z}), \end{eqnarray} which by Lemma
\ref{lecox2CI},(i) gives
\begin{eqnarray}
 \label{eqpour6mai2}j_{E*}\beta= 2(\sum_{i\geq
0,i+2m=n-1}\alpha_i\delta^{i+1}[\Sigma_{m}^{[2]}])\,\,\,\,{\rm
 in}\,\,H^{2n}(X^{[2]},\mathbb{Z})
\,\,{\rm modulo}\,\,j_{E*}(2H^*(E_\Delta,\mathbb{Z}))
\end{eqnarray}
for some integers $\alpha_i$. By (\ref{eqpour6mai}), we thus have
\begin{eqnarray}
\label{eqpour6mai3}2(\sum_{i\geq
0,i+2m=n-1}\alpha_i\delta^{i+1}[\Sigma_{m}^{[2]}]+j_{E*}(\gamma))=2
[\Gamma_0]\,\,{\rm
 in}\,\,H^{2n}(X^{[2]},\mathbb{Z})
 \end{eqnarray}
 for some integral homology class $\gamma\in H^{2n-2}(E,\mathbb{Z})$.
 It is proved in \cite[Theorem 2.2]{totaro} that the cohomology of $X^{[2]}$ as no $2$-torsion when $X$ is
 an odd degree complete intersection in projective space, so (\ref{eqpour6mai3}) gives
\begin{eqnarray}
\label{eqpour6mai4} \sum_{i\geq
0,i+2m=n-1}\alpha_i\delta^{i+1}[\Sigma_{m}^{[2]}]+j_{E*}(\gamma)=
[\Gamma_0]\,\,{\rm
 in}\,\,H^{2n}(X^{[2]},\mathbb{Z}).
 \end{eqnarray}
We now observe that $\beta$ is algebraic in
$H^{2n-2}(E_\Delta,\mathbb{Z})$, which easily follows from the fact
that $i_{E*}\beta$ is algebraic in $H^{2n}(\widetilde{X\times X},\mathbb{Z})$ by recalling that $\widetilde{X\times X},\mathbb{Z}$ is simply the blow-up of $X\times X$ along its diagonal.
 We thus conclude from (\ref{eqpour6mai0}) that $2\gamma$ is algebraic,
 and by our assumption that $H^{2*}(X,\mathbb{Z})/H^{2*}(X,\mathbb{Z})_{alg}$ has no $2$-torsion,
 $\gamma$ is algebraic, that is,
 $\gamma=[\Gamma']$ for some  $\Gamma'\in{\rm CH}^{n-1}(E_\Delta)$.
Thus we have
 \begin{eqnarray}\label{eqpour6mai42}\sum_{i\geq
0,i+2m=n-1}\alpha_i\delta^{i+1}[\Sigma_{m}^{[2]}]+j_{E*}[\Gamma']=
[\Gamma_0]\,\,{\rm
 in}\,\,H^{2n}(X^{[2]},\mathbb{Z}).
 \end{eqnarray}
 We have $\mu^*\delta^n=\pm [\Delta_X]$ and, for any $z\in {\rm CH}^{n-1}(E_\Delta)$,
  $\mu^*(j_{E*}z)=N \Delta_X$ for some $N\in \mathbb{Z}$.
 As $\mu^*[\Gamma_0]=0$, we can thus assume,  up to modifying $\Gamma'$ but without changing
 $\mu^*\Gamma_0$, that in formula (\ref{eqpour6mai42}),
$\alpha_{n-1}=0$ and $\Gamma'$ satisfies $\tau_{E*}\Gamma'=0$,
hence  $\mu^*(j_{E*}(\Gamma'))=0$ in ${\rm CH}^n(X\times
X)$. Next, for $m>0$, $\mu^*\delta^{i+1}[\Sigma_{m}^{[2]}]$ is
supported over $D\times X$ for some proper closed algebraic subset
$D\subset X$. It follows that the cycle
$$\Gamma:=\Gamma_0-\sum_{i\geq
0,i+2m=n-1}\alpha_i\delta^{i+1}[\Sigma_{m}^{[2]}]-j_{E*}(\Gamma')$$ is
cohomologous to $0$ on $X^{[2]}$, and satisfies
$\mu^*\Gamma=\mu^*\Gamma_0+Z'$ where $Z'\in {\rm CH}^n(X\times X)$
is supported over $D\times X$.
\end{proof}

We now consider the case where $X$ is a smooth cubic hypersurface in
$\mathbb{P}^{n+1}$. We then have the following description of
$X^{[2]}$, which is also used in \cite{galkin}. We denote below by
$F(X)$ the variety of lines of $X$. Let
$$P=\{([l],x),\,x\in l,\,l\subset X\}$$
be the universal $\mathbb{P}^1$-bundle, with first projection
$p:P\rightarrow F(X)$, and let $P_2\rightarrow F(X)$ be the
$\mathbb{P}^2$-bundle defined as the symmetric product of $P$ over
$F(X)$. There is a natural embedding $P_2\subset X^{[2]}$ which maps
each fiber of $P\rightarrow F(X)$, that is the second symmetric
product of  a line in $X$, isomorphically onto the set of subschemes
of length $2$ of $X$ contained in this line. Let $p_X:P_X\rightarrow
X$ be the projective bundle with fiber over $x\in X$ the set of
lines in $\mathbb{P}^{n+1}$ passing through $x$. Note that $P$ is
naturally contained in $P_X$, as one sees by considering the second
projection $q:P\rightarrow X$.
\begin{prop}\label{propX2} (i) The rational map $\Phi:X^{[2]}\dashrightarrow P_X$ which to a unordered pair of points
$x,\,y\in X$ not contained in a common line of $X$ associates the
pair $([l_{x,y}],z)$, where $l_{x,y}$ is the line in
$\mathbb{P}^{n+1}$ generated by $x$ and $y$ and $z\in X$ is the
residual point  of the intersection $l_{x,y}\cap X$, is
desingularized by the blow-up of $P_2$ in $X^{[2]}$.

(ii) The induced morphism
$\widetilde{\Phi}:\widetilde{X^{[2]}}\rightarrow P_X$ identifies
$\widetilde{X^{[2]}}$ with the blow-up $\widetilde{P_X}$ of $P$ in
$P_X$.

(iii) The  exceptional divisors of the two maps
$\widetilde{X^{[2]}}\rightarrow X^{[2]}$ and
$\widetilde{P_X}\rightarrow P_X$ are identified by the isomorphism
$\widetilde{\Phi}':\widetilde{X^{[2]}}\cong \widetilde{P_X}$ of
(ii).
\end{prop}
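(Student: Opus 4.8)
The plan is to realise $X^{[2]}$ and $P_X$ as two families of degree-$3$ divisors on lines, to describe $\Phi$ and its inverse as projectivisations of sections of vector bundles, and to resolve these rational maps by blowing up the zero schemes of those sections, which will turn out to be exactly $P_2$ and $P$. In detail: let $\mathbb{G}$ be the Grassmannian of lines in $\mathbb{P}^{n+1}$, $\mathcal{S}$ the rank-$2$ tautological subbundle, $\mathcal{L}=\mathbb{P}(\mathcal{S})\to\mathbb{G}$ the universal line, and for a line $l$ write $\sigma_l$ for the restriction to $l$ of the cubic form $f$ defining $X$; the $\sigma_l$ assemble into a section $\sigma$ of $\mathrm{Sym}^3\mathcal{S}^\vee$ over $\mathbb{G}$ with $F(X)=Z(\sigma)$, and for $X$ smooth the scheme $Z(\sigma)$ is smooth of the expected codimension $4$. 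Since every length-$2$ subscheme of $\mathbb{P}^{n+1}$ spans a unique line, taking spans defines a morphism $X^{[2]}\to\mathbb{G}$, $\xi\mapsto[l_\xi]$, and viewing $\xi$ as a subscheme of $l_\xi$ gives a morphism $X^{[2]}\to Y:=\mathbb{P}(\mathrm{Sym}^2\mathcal{S}^\vee)$, the relative Hilbert scheme of length-$2$ subschemes of the fibres of $\mathcal{L}$; dually, $P_X$ coincides with the universal degree-$3$ divisor $\mathcal{J}:=\mathcal{L}\cap(\mathbb{G}\times X)\subset\mathcal{L}$ (a point of $P_X$ being a line $l$ together with a point of $l\cap X$), with $P=\mathcal{L}\times_{\mathbb{G}}F(X)\subset\mathcal{J}$. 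Over $X^{[2]}$ the condition $\xi\subset X$ forces the tautological quadratic form $g$ cutting $\xi$ out on $l_\xi$ to divide $\sigma_{l_\xi}$, so that $\sigma_{l_\xi}=g\cdot\lambda$ with $\lambda$ a section of a rank-$2$ bundle $\mathcal{A}$; since $\lambda$, on each line $l_\xi$, cuts out the residual point of $l_\xi\cap X$, the rational map $\mathbb{P}(\lambda):X^{[2]}\dashrightarrow\mathcal{L}$, with image in $\mathcal{J}=P_X$, is exactly $\Phi$. Symmetrically, over $P_X=\mathcal{J}$ the condition $z\in X$ forces the tautological linear form $g_1$ vanishing at $z$ to divide $\sigma_l$, so $\sigma_l=g_1\cdot\kappa$ with $\kappa$ a section of a rank-$3$ bundle $\mathcal{B}$, and $\Phi^{-1}=\mathbb{P}(\kappa)$ by residuation of divisors on a line.

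The general fact I would invoke is that the rational map obtained by projectivising a section $s$ of a vector bundle $\mathcal{E}$ is resolved precisely by blowing up the zero scheme $Z(s)$: on the blow-up, the pulled-back section factors as the canonical equation of the exceptional divisor times a nowhere-vanishing section of $\mathcal{E}$. Applied to $\lambda$ and to $\kappa$, this reduces assertion (i) (and its analogue for $\Phi^{-1}$) to the claim that $Z(\lambda)=P_2$ and $Z(\kappa)=P$ as schemes. Set-theoretically $Z(\lambda)=\{\xi:l_\xi\subset X\}=P_2$ and $Z(\kappa)=\{([l],z):l\subset X\}=P$; moreover each of $Z(\lambda)$, $Z(\kappa)$ is the zero scheme of a section of a bundle whose rank ($2$, resp.\ $3$) equals the codimension of its support, hence a local complete intersection, hence Cohen--Macaulay, and it is generically reduced because $F(X)=Z(\sigma)$ is smooth of the expected dimension (concretely, the generic transversality needed is the surjectivity of $H^0(l,N_{l/\mathbb{P}^{n+1}})\to H^0(l,\mathcal{O}_l(3))$ for lines $l\subset X$, equivalently $h^1(l,N_{l/X})=0$, which is precisely the smoothness of $F(X)$ of the expected dimension at $[l]$). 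Since a Cohen--Macaulay scheme that is generically reduced is reduced, $Z(\lambda)=P_2$ and $Z(\kappa)=P$, so $\widetilde{X^{[2]}}:=\mathrm{Bl}_{P_2}X^{[2]}$ resolves $\Phi$ and $\widetilde{P_X}$ (the blow-up of $P$ in $P_X$) resolves $\Phi^{-1}$, which is (i).

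For (ii), the morphism $\widetilde\Phi:\widetilde{X^{[2]}}\to P_X$ pulls the ideal of $P=\mathcal{L}\times_{\mathbb{G}}F(X)$ back to the pullback of $\mathcal{I}_{F(X)}$ along $\widetilde{X^{[2]}}\to X^{[2]}\to\mathbb{G}$; since $\sigma_l=g\cdot\lambda$ with $g$ a nowhere-vanishing section of a line bundle, the equations of $F(X)$ pull back on $X^{[2]}$ to the coefficients of $g\cdot\lambda$, which generate the same ideal $\mathcal{I}_{P_2}$ as those of $\lambda$, and therefore $\widetilde\Phi^*\mathcal{I}_P\cdot\mathcal{O}_{\widetilde{X^{[2]}}}=\mathcal{I}_{P_2}\cdot\mathcal{O}_{\widetilde{X^{[2]}}}=\mathcal{O}(-E)$, $E$ being the exceptional divisor of $\widetilde{X^{[2]}}$. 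As this ideal is invertible, $\widetilde\Phi$ factors through $\widetilde{P_X}=\mathrm{Bl}_P P_X$, yielding a morphism $\widetilde\Phi':\widetilde{X^{[2]}}\to\widetilde{P_X}$, and symmetrically $\widetilde{\Phi^{-1}}$ factors through $\widetilde{X^{[2]}}$. The two resulting morphisms restrict to the identity over $X^{[2]}\setminus P_2$ and $P_X\setminus P$, so by separatedness they are mutually inverse isomorphisms; this is (ii). Finally (iii) is immediate: $\widetilde\Phi'$ restricts over $X^{[2]}\setminus P_2$ to $\Phi$, an isomorphism onto $P_X\setminus P$, hence $\widetilde\Phi'$ carries the exceptional divisor of $\widetilde{X^{[2]}}\to X^{[2]}$ onto that of $\widetilde{P_X}\to P_X$.

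The main obstacle is the bookkeeping with the tautological bundles and ideals on $Y$ and $\mathcal{L}$: one must verify that $\sigma_l$ is genuinely divisible by $g$, respectively by $g_1$, as a regular section with the expected residual form, and, crucially, that the zero schemes $Z(\lambda)$, $Z(\kappa)$ carry exactly the reduced structures of $P_2$ and $P$, so that a single blow-up on each side suffices and is exactly the blow-up along $P_2$, respectively along $P$. As indicated, the geometric input that makes this work is the smoothness, of the expected dimension, of the Fano variety of lines $F(X)$, valid for every smooth cubic hypersurface.
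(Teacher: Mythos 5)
Your proposal is correct in substance, and for part (i) it is essentially the paper's argument in different clothing: Voisin also resolves $\Phi$ by exhibiting the residual point as the zero divisor of a section of a line bundle of relative degree $1$ on the universal-line bundle $Q=\mathbb{P}(\mathcal{E})$ over $X^{[2]}$, i.e.\ as the projectivisation of a section $s'$ of a rank-$2$ bundle $\mathcal{E}=\pi_*\mathcal{O}_Q(3)(-D_1)$ whose zero locus is checked to be $P_2$ scheme-theoretically; your $\lambda$ is exactly her $s'$, and your CM-plus-generic-reducedness argument (with the transversality $H^0(N_{l/\mathbb{P}^{n+1}})\twoheadrightarrow H^0(\mathcal{O}_l(3))$, equivalent to smoothness of $F(X)$ of the expected dimension) is a legitimate way to supply the step she dismisses with ``one easily checks''. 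Where you genuinely diverge is in (ii)--(iii): the paper shows that $\widetilde{\Phi}^{-1}(P)$ is the irreducible divisor $Q_{P_2}$, deduces a factorisation $f:\widetilde{X^{[2]}}\rightarrow \widetilde{P_X}$, and proves $f$ is an isomorphism by showing it contracts no curve (a curve contracted would lie in a fibre $l^{(2)}\cong\mathbb{P}^2$, forcing that $\mathbb{P}^2$ and all its deformations, hence the whole divisor $Q_{P_2}$, to be contracted, contradicting that its image must be the exceptional divisor of $\widetilde{P_X}$); you instead resolve $\Phi^{-1}$ symmetrically by blowing up $P=Z(\kappa)$, where $\kappa$ is the residual quadric, a section of a rank-$3$ bundle on $P_X$, and then use the universal property of the blow-up on both sides together with density and separatedness to exhibit the two induced morphisms as mutual inverses. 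Your route buys symmetry and avoids the Zariski-main-theorem/no-contracted-curve step, at the price of more scheme-theoretic bookkeeping, which you do address; the one assertion that still deserves a line in a full write-up is the identity $\mathcal{I}_P\cdot\mathcal{O}_{P_X}=\mathcal{I}_{F(X)}\cdot\mathcal{O}_{P_X}$, which holds because the coefficients of $\sigma$ generate $\mathcal{I}_{F(X)}$ (the same transversality) and the equation $\sigma$ of $P_X$ in $\mathcal{L}$ lies in the ideal generated by its own coefficients, and likewise the fact that $X^{[2]}$ sits as a closed subscheme of $\mathbb{P}(\mathrm{Sym}^2\mathcal{S}^\vee)$ so that the resolved $\Phi^{-1}$ really lands in $X^{[2]}$. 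Both you and the paper ultimately rest on the same geometric input, the smoothness of $F(X)$ (hence of $P_2$ and $P$) for any smooth cubic.
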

\begin{proof} (i) There is a morphism from $X^{[2]}$ to the Grassmannian $G(1,n+1)$ of lines in $\mathbb{P}^{n+1}$,
which to $x+y$ associates the line $\langle  x,y\rangle$. Let
$\pi:Q\rightarrow X^{[2]}$ be the pull-back of the natural
$\mathbb{P}^1$-bundle on $G(1,n+1)$. Let $\alpha:Q\rightarrow
\mathbb{P}^{n+1}$ be the natural map. Then $\alpha^{-1}(X)$ is a
reducible divisor in $Q$, which is generically of degree $3$ over
$X^{[2]}$, with one component $D_1$ which is finite of degree $2$
over $X^{[2]}$, parameterizing the pairs $(x,x+y)$ and a second
component $\mathcal{D}$ which is of degree $1$ over $X^{[2]}$ and
parameterizes generically  the pairs $(z,x+y)$. The divisor
$\mathcal{D}$ is isomorphic to $X^{[2]}$ away from $P_2$. Over
$P_2$, the restricted $\mathbb{P}^1$-bundle $Q_{P_2}$ is contained
in $\alpha^{-1}(X)$ but not in $D_1$, so it is contained in
$\mathcal{D}$. We claim that $\mathcal{D}$ is smooth and identifies
with the blow-up of $X^{[2]}$ along $P_2$. Indeed, this simply
follows from the fact that the divisor $\mathcal{D}$ is the zero set
of a section $s$ of $\mathcal{O}_Q(3)(-D_1)$ on $Q$. The line bundle
$\mathcal{O}_Q(3)(-D_1)$ on $Q$ has degree $1$ along the fibers of
$Q\rightarrow X^{[2]}$, so $R^0\pi_*\mathcal{O}_Q(3)(-D_1)$ is a
rank $2$ vector bundle $\mathcal{E}$ on $X^{[2]}$ such that
$Q=\mathbb{P}(\mathcal{E})$. The section $s$ provides a section $s'$
of $\mathcal{E}$ and one easily checks that $P_2\subset X^{[2]}$ is
scheme-theoretically defined as the zero-locus of  $s'$. This
implies that $\mathcal{D}$ is the blow-up of $X^{[2]}$ along $P_2$
and the smoothness of $P_2$  implies the smoothness of
$\mathcal{D}$. The claim is thus proved. On the other hand, the
rational map $\Phi$ clearly pulls back to a morphism on
$\mathcal{D}$, so (i) is proved.

(ii) and (iii) If the length $2$ subscheme $Z=x+y$ (or $Z=(2x, v)$
with $v$ tangent to $X$ at $x$) does not belong to $P_2$, then the
line $l_{x,y}$ (or $l_{x,v}$) is not contained in $X$, the morphism
$\Phi$ is well defined at  $Z$ and its image is a pair $([l],u)$
where $l$ is a line passing through $u$ and is not contained in $X$.
At such point $([l],u)$ of $P_X$, $\Phi^{-1}$ is well-defined, and
associates to $([l'],u')$ in a neighborhood of $([l],u)$ in $P_X$,
the residual scheme of $u'$ in $l'\cap X$. This proves (iii). It
remains to understand what happens along the exceptional divisor
$Q_{P_2}$ of $\mathcal{D}$. Now we have
$Q_{P_2}=\{(u,x+y,[l]),\,l\subset X, x+y\in l^{(2)},\,u\in l\}$. By
definition, $\widetilde{\Phi}$ maps such a triple to the pair
$(u,[l])$, which by definition belongs to $P_X$. Furthermore, the
fiber of $\widetilde{\Phi}$ over $(u,[l])$ when $l\subset X$, that
is when $(u,[l])\in P$, identifies with
 the plane $l^{(2)}\cong
\mathbb{P}^2$. Thus $\widetilde{\Phi}^{-1}(P)$ is equal to the
smooth irreducible hypersurface $Q_{P_2}$ in the smooth variety
$\mathcal{D}$ and this implies that $\widetilde{\Phi}$ factors
through a morphism $f:\mathcal{D}\rightarrow \widetilde{P_X}$ which
has to be an isomorphism, since it cannot contract any curve ;
indeed, otherwise a contracted curve would be a curve in a fiber
$\mathbb{P}^2$
 as described above, so the whole corresponding
$\mathbb{P}^2$ would be contracted by $f$, hence also all
deformations of this $\mathbb{P}^2$ in
$\mathcal{D}=\widetilde{X^{[2]}}$. But then the divisor $Q_{P_2}$
would be contracted by $f$, while its image has to be the
exceptional divisor of $\widetilde{P_X}\rightarrow P_X$.
\end{proof}
We now first give the proof of Theorem \ref{theoeqchcohdec} in the case
of  cubics of dimension $\leq 4$, because the argument is shorter in this case.
\begin{proof}[Proof of Theorem \ref{theoeqchcohdec} in the case $n\leq4$]
Let  $X$ be a smooth cubic hypersurface  of dimension $\leq 4$ and
assume $X$ admits a cohomological decomposition of the diagonal. The
assumptions of Proposition \ref{leGamma} are satisfied by $X$, since
the integral cohomology of a smooth cubic hypersurface has no
torsion and the integral Hodge conjecture is proved in
\cite{voisinaspects} for cubic fourfolds. Using the notation
introduced previously, there exists by Proposition \ref{leGamma} a
cycle $\Gamma\in {\rm CH}^n(X^{[2]})$ such that
\begin{eqnarray}
\label{eqnew1}\mu^*\Gamma= \Delta_X-x\times X-X\times x-W\,\,{\rm
in} \,\,{\rm CH}^n(X\times X),
\end{eqnarray}
 with $W$ supported over
$D\times X$, $D\subsetneqq X$, and  $[\Gamma]=0$ in
$H^{2n}(X^{[2]},\mathbb{Z})$.

 By
Proposition \ref{propX2}, the blow-up
$\sigma:\widetilde{X^{[2]}}\rightarrow X^{[2]}$ of $X^{[2]} $ along
$P_2$ identifies via $\widetilde{\Phi}$ with a blow-up of the
projective bundle $P_X$ over $X$. Furthermore, the exceptional
divisor  of $\widetilde{\Phi}:\widetilde{X^{[2]}}\rightarrow P_X$ is
also the exceptional divisor of
$\sigma:\widetilde{X^{[2]}}\rightarrow X^{[2]}$, hence maps via
$\sigma$ to $P_2\subset X^{[2]}$. It follows that the pull-back
$\sigma^*(\Gamma)$ of the cycle $\Gamma$ to $\widetilde{X^{[2]}}$
can be written as
\begin{eqnarray} \label{eqprooftheo1}\sigma^*(\Gamma)=\Gamma_1+\Gamma_2,
\end{eqnarray}
where $\Gamma_1$ and $\Gamma_2$ are cohomologous to $0$, $\Gamma_1$
is a cycle cohomologous to $0$ on the exceptional divisor of
$\widetilde{\Phi}:\widetilde{X^{[2]}}\rightarrow P_X$, and
$\Gamma_2$ is the pull-back of a cycle $\Gamma'_2$ cohomologous to
$0$ on $P_X$. As the exceptional divisor of $\widetilde{\Phi}$
equals the exceptional divisor of $\sigma$, it follows from
(\ref{eqprooftheo1}), by applying $\sigma_*$, that
\begin{eqnarray} \label{eqprooftheo2}\Gamma={i_{P_2}}_*(\Gamma'_1)+\Phi^*(\Gamma'_2)\,\,{\rm in}\,\,{\rm CH}(X^{[2]}),
\end{eqnarray}
where $\Gamma'_1$ is a cycle  cohomologous to $0$ on $P_2$. Here
$i_{P_2}$ denotes the inclusion  map of $P_2$ in $X^{[2]}$. It is
known that for a smooth cubic hypersurface of dimension $\leq 4$,
cycles homologous to $0$ are algebraically equivalent to $0$. For
codimension $2$ cycles, this is proved by Bloch and Srinivas
\cite{blochsrinivas} and is true more generally for any rationally
connected variety;  for $1$-cycles on cubic fourfolds, this is
proved in \cite{zong}, and this is true more generally for
$1$-cycles on Fano complete intersections of index $\geq2$. The
result then also holds for cycles on a projective bundle over a
cubic of dimension $\leq4$. Thus $\Gamma'_2$ is algebraically
equivalent to $0$ and thus we conclude that
\begin{eqnarray} \label{eqprooftheo3}\Gamma={i_{P_2}}_*(\Gamma'_1)\,\,{\rm in}\,\,{\rm CH}(X^{[2]})/{\rm alg},
\end{eqnarray}
for some $n$-cycle $\Gamma'_1$ homologous to $0$  on $P_2$. We apply
now the following result:
\begin{lemm} \label{leP2} Let $X$ be a $n$-dimensional smooth cubic
hypersurface and $Z$ be a $n$-cycle homologous to $0$ on
$P_2\stackrel{i_{P_2}}{\hookrightarrow} X^{[2]}$. Then
$\mu^*({i_{P_2}}_*Z)\in {\rm CH}^n(X\times X)$ is supported on
$D\times X$ for some proper closed algebraic subset $D$ of $X$.
\end{lemm}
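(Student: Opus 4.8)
The plan is to compute $\mu^*({i_{P_2}}_*Z)$ by pulling it back to $\widetilde{X\times X}$, identifying $r^{-1}(P_2)$ with the $\mathbb{P}^1\times\mathbb{P}^1$-bundle $\widetilde P:=P\times_{F(X)}P$, and then using the $\mathbb{P}^2$-bundle structure of $P_2$ over $F(X)$ to reduce to a cycle pulled back from $F(X)$, where the assertion follows from a dimension count. For \emph{Step 1}, note that inside $P_2={\rm Sym}^2_{F(X)}P$ the locus $P$ of length $2$ subschemes supported at one point of a line of $X$ is the relative Veronese conic bundle, hence a \emph{divisor} in $P_2$. The degree $2$ covering $r^{-1}(P_2)\to P_2$ induced by $r\colon\widetilde{X\times X}\to X^{[2]}$ and the quotient map $\rho\colon\widetilde P\to P_2$ are \emph{both} double covers of $P_2$ branched exactly along this divisor $P$ (fibrewise: the double cover of $\mathbb{P}^2$ branched along a conic is $\mathbb{P}^1\times\mathbb{P}^1$), hence are canonically identified, $r^{-1}(P_2)\cong\widetilde P$, and under this identification the inclusion $r^{-1}(P_2)\hookrightarrow\widetilde{X\times X}$ becomes the lift $\widetilde g$ of the natural map $g\colon\widetilde P\to X\times X$, $(l,x,y)\mapsto(x,y)$ (the lift exists because $g^{-1}(\Delta_X)$ is a Cartier divisor in the smooth variety $\widetilde P$). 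Since $r$ is finite flat of degree $2$, base change in the resulting cartesian square gives $r^*({i_{P_2}}_*Z)=\widetilde g_*(\rho^*Z)$, and applying $\tau_*$ yields
\[
\mu^*({i_{P_2}}_*Z)=g_*(\rho^*Z)\qquad\text{in }{\rm CH}^n(X\times X),
\]
so it suffices to show $g_*(\rho^*Z)$ is supported on $D\times X$ for some proper closed $D\subsetneq X$.

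\emph{Step 2: killing the fibre directions of $\pi\colon P_2\to F(X)$.} Let $\xi$ be the relative hyperplane class. By the projective bundle formula $Z=\pi^*c_0+\xi\cdot\pi^*c_1+\xi^2\cdot\pi^*c_2$ with $c_a\in{\rm CH}(F(X))$ of dimension $n-2+a$; since pullback from $F(X)$ is injective on cohomology and $[Z]=0$, each $c_a$ is homologically trivial on $F(X)$. For ${\rm dim}\,X\le4$ (the range in which the lemma is used) one then has $c_1=c_2=0$: each of $c_1,c_2$ either has dimension strictly larger than ${\rm dim}\,F(X)$ (hence $0$), or is a top-dimensional cycle on $F(X)$ (hence $0$ by homological triviality, since ${\rm CH}_{{\rm dim}\,F(X)}(F(X))=\mathbb{Z}$), or — only when ${\rm dim}\,X=4$ and $a=1$ — is a homologically trivial divisor class on the hyper-K\"ahler fourfold $F(X)$, hence $0$ as ${\rm Pic}^0\,F(X)=0$. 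Thus $Z=\pi^*c_0$ with $c_0$ homologically trivial of dimension $n-2$, and $\rho^*Z=\widetilde p^{\,*}c_0$ where $\widetilde p=\pi\circ\rho\colon\widetilde P\to F(X)$.

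\emph{Step 3: dimension count.} Writing $q_1\colon\widetilde P\to P$ for the first projection and $q\colon P\to X$, $(l,x)\mapsto x$, for the second projection of the universal line, we have ${\rm pr}_1\circ g=q\circ q_1$, and $\widetilde p=p\circ q_1$. Hence the support of $\widetilde p^{\,*}c_0$ is carried by $q_1$ into $p^{-1}({\rm supp}\,c_0)\subset P$, so the support of $g_*(\widetilde p^{\,*}c_0)$ is carried by ${\rm pr}_1$ into $D:=\overline{q\bigl(p^{-1}({\rm supp}\,c_0)\bigr)}$, the union of the lines of $X$ parametrised by ${\rm supp}\,c_0$. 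Since ${\rm dim}\,{\rm supp}\,c_0\le n-2$, this union has dimension $\le n-1<n$, so $D\subsetneq X$, and therefore $\mu^*({i_{P_2}}_*Z)=g_*(\rho^*Z)=g_*(\widetilde p^{\,*}c_0)$ is supported on $D\times X$, as required.

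The main obstacle is Step 1: identifying $r^{-1}(P_2)$ with $P\times_{F(X)}P$ and making the base change identity precise, in particular verifying that no spurious cycle supported on the diagonal of $X\times X$ is introduced; after that the argument is pure dimension bookkeeping, resting on the elementary fact that the union of the lines of $X$ in a $(\le n-2)$-dimensional family has dimension $\le n-1$. (For ${\rm dim}\,X\ge5$ the terms $\xi\cdot\pi^*c_1$ and $\xi^2\cdot\pi^*c_2$ need not vanish and require extra input — one still knows ${\rm supp}\,Z$ lies over a proper closed subset of $F(X)$ since ${\rm dim}\,Z=n<2n-4={\rm dim}\,F(X)$ — but the lemma is only invoked for ${\rm dim}\,X\le4$.)
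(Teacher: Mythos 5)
Your Steps 1 and 3 agree with what the paper does implicitly (it simply asserts $\mu^*({i_{P_2}}_*Z)=q_{2*}({r'}^*Z)$, where $q_2:P\times_FP\rightarrow X\times X$ is the natural map and $r':P\times_FP\rightarrow P_2$ the quotient), and your vanishing of $c_1,c_2$ in dimensions $\leq 4$ is correct. The genuine gap is that this only proves the statement for ${\rm dim}\,X\leq 4$, while Lemma \ref{leP2} is stated for arbitrary $n$ and is really used in that generality: it is invoked again in the proof of Theorem \ref{theoeqchcohdec} for general $n$, both in the concluding step and inside the auxiliary lemma showing that $3\mu^*(\Phi^*(Z))$ is supported on $D\times X$, where it is applied to an $n$-cycle homologous to zero coming from $P\times_FP$. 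So your parenthetical ``the lemma is only invoked for ${\rm dim}\,X\le4$'' is not accurate, and for $n\geq5$ your argument breaks: the coefficients $c_1,c_2$ need not vanish, and after pushing forward, a term such as $\xi\cdot\pi^*c_1$ produces (via the projection formula) a cycle of the shape $H_2\cdot W$, supported on $X\times D'$ rather than $D\times X$ --- exactly the wrong direction. Your fallback remark that ${\rm supp}\,Z$ lies over a proper closed subset $S\subset F(X)$ does not help either, since $\dim S$ can be as large as $n$, and the union of the corresponding lines can then be all of $X$ (only families of lines of dimension $\leq n-2$ sweep out a proper subset).

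The paper's proof works for every $n$ by one extra idea that your proposal is missing. It decomposes not $Z$ on the $\mathbb{P}^2$-bundle $P_2$ but $T={r'}^*Z$ on the $\mathbb{P}^1\times\mathbb{P}^1$-bundle $P\times_FP$, writing $T=h_1h_2\pi^*\alpha+h_1\pi^*\beta+h_2\pi^*\gamma+\pi^*\zeta$ with $\alpha,\beta,\gamma,\zeta$ homologous to $0$ on $F$. The terms containing $h_1=q_2^*H_1$ are harmless by the projection formula, the term $\pi^*\zeta$ is handled by your dimension count ($\zeta$ is an $(n-2)$-cycle, so the union of the corresponding lines is a proper subset $D$), and the problematic term $h_2\pi^*\gamma$ is treated using the relation $[\Delta_P]=h_1+h_2+\pi^*\lambda$ in ${\rm CH}^1(P\times_FP)$: substituting $h_2=d-h_1-\pi^*\lambda$ with $d=[\Delta_P]$, the new term $q_{2*}(d\cdot\pi^*\gamma)$ is an $n$-cycle supported on $\Delta_X$, hence proportional to $\Delta_X$ in ${\rm CH}(X\times X)$, and cohomologous to $0$, hence zero. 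This ``diagonal trick'' is what lets the argument dispense with the special low-dimensional facts you rely on (top-dimensional homologically trivial cycles on $F$, ${\rm Pic}^0(F(X))=0$ for cubic fourfolds). To repair your proof as a proof of the lemma as stated, you would need either this substitution or some other device killing (or re-expressing) the $\xi$-terms in arbitrary dimension.
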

\begin{proof}
Recall that $P_2$ is the union of the symmetric products $L^{(2)}$
over all lines $L\subset X$.  As before, denote by
$$q:P\rightarrow X,\,\,p:P\rightarrow F$$
 the natural maps, and by
 $q_2$ the natural map
$P\times_FP\rightarrow X\times X$ induced by $q$. We will also
denote by $\pi:P\times_FP\rightarrow F$ the map induced by $p$. Via
$\pi$, $P\times_FP$ is a $\mathbb{P}^1\times \mathbb{P}^1$-bundle
over $F$. Let $H:=c_1(\mathcal{O}_X(1))\in {\rm CH}^1(X)$ and let
$h=q^*H\in {\rm CH}^1(P)$ be its pull-back to $P$. For a cycle $Z$
supported on $P_2$, we have
 $$\mu^*({i_{P_2}}_*Z)=q_{2*}({r'}^*Z)\,\,{\rm in}\,\,{\rm CH}(X\times X)$$
 where  $r'$ is
the quotient map $P\times_FP\rightarrow P_2$. Let $T:={r'}^*Z\in
{\rm CH}(P\times_FP)$. This is a cycle  homologous to $0$ on
$P\times_F P$, and thus it  can be written as
\begin{eqnarray} \label{eqprooftheo5}T=h_1h_2\pi^*\alpha+h_1\pi^*\beta+h_2\pi^*\gamma+\pi^*\zeta
\,\,{\rm in}\,\,{\rm CH}(P\times_F P),
\end{eqnarray}
for some cycles $\alpha,\,\beta,\,\gamma,\,\zeta$ homologous to $0$
on $F$, with $h_i=pr_i^*h$,
 for $i=1,\,2$,  $pr_i:P\times_FP\rightarrow P$ being the
$i$-th projection.

We now push-forward these cycles to $X\times X$ via $q_2$
 and observe that the three cycles
 $$q_{2*}(\pi^*\zeta),\,q_{2*}(h_1h_2\pi^*\alpha),\,q_{2*}(h_1\pi^*\beta)$$ are cycles
 supported on $D\times X$ for some $D\subsetneqq X$. Indeed, for the two last ones, this is due to the projection
 formula (and the equality
 $h_1=q_2^*(H_1)$, where $H_1:=pr_1^*H\in {\rm CH}^1(X\times X)$), and for the first one,  this is because $q_{2*}(\pi^*\zeta)$ is supported on
 $q(p^{-1}({\rm Supp}\,\zeta))\times X$.
 Now $\zeta$ is a $(n-2)$-cycle, so $q(p^{-1}({\rm Supp}\,\zeta))$ is a proper closed
 algebraic subset of $X$.
It remains to  examine the cycle $q_{2*}(h_2\pi^*\gamma)$. We
observe now that the diagonal $\Delta_P\subset P\times_FP\subset
P\times P$ is a divisor $d$ in $P\times_FP$ whose class is of the
form $d=h_1+h_2+\pi^*\lambda$ for some divisor class $\lambda\in
{\rm CH}^1(F)$. Furthermore, we obviously have $q_2(\Delta_P)\subset
\Delta_X$. Thus we can write
$$h_2\pi^*\gamma=(d-h_1-\pi^*\lambda)\pi^*\gamma$$
in ${\rm CH}(P\times_FP)$.
We thus have
$$q_{2*}(h_2\pi^*\gamma)=q_{2*}(d\pi^*\gamma)-q_{2*}(h_1\pi^*\gamma)-q_{2*}(\pi^*(\lambda\gamma))\,\,{\rm in}
\,\,{\rm CH}(X\times X).$$ As already explained, the cycles
$q_{2*}(h_1\pi^*\gamma),\,q_{2*}(\pi^*(\lambda\gamma))$ are
supported over $D\times X$ for some $D\subsetneqq X$. Finally, the
last cycle $q_{2*}(d\pi^*\gamma)$ has to be $0$ in ${\rm CH}(X\times
X)$. Indeed, this is a $n$-cycle of $X\times X$ which is supported
on the diagonal, hence proportional to it, and also cohomologous to
$0$.
\end{proof}
Combining (\ref{eqnew1}), (\ref{eqprooftheo3}) and Lemma \ref{leP2}, we conclude that
\begin{eqnarray} \label{eqprooftheo6} \Delta_X=x\times X+X\times x-W'\,\,{\rm in}\,\,{\rm CH}(X\times X)/{\rm alg},
\end{eqnarray}
where $W'$ is supported on $D'\times X$ for some $D'\subsetneqq X$.
In conclusion, $X$ admits a decomposition of the diagonal modulo
algebraic equivalence, and we can now apply Proposition
\ref{propredalg} to conclude that $X$ admits a Chow-theoretic
decomposition of the diagonal.
\end{proof}
We now turn to the general case.
\begin{proof}[Proof of Theorem \ref{theoeqchcohdec} for general   $n$] Let $X$ be a smooth cubic hypersurface such that the group
$H^{2*}(X,\mathbb{Z})/H^{2*}(X,\mathbb{Z})_{alg}$ has no
$2$-torsion. Then  Propositions \ref{leGamma} and \ref{propX2} apply.
Next, if we examine the proof given in the case of dimension $\leq4$, we see that the
only  place where we used the fact that ${\rm dim}\,X\leq 4$ is
in the analysis of the term $\Phi^*(\Gamma'_2)$, where $\Gamma'_2$
is cohomologous to $0$ on $P_X$. In the above proof, we directly
used the fact that this term is algebraically equivalent to $0$,
which we do not know in higher dimension. The following provides  an
alternative argument which works also in higher dimension:
\begin{lemm}Let $X$ be a smooth cubic hypersurface
of dimension   $n\geq 2$. Let $Z$ be a $n$-cycle cohomologous to $0$
on $P_X$. Then $3\mu^*(\Phi^*(Z))\in {\rm CH}^n(X\times X)$ is
rationally equivalent to a cycle supported on $D\times X$, for some
$D\subsetneqq X$.
\end{lemm}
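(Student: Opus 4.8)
The plan is to exploit the fact that $p_X\colon P_X\to X$ is a projective bundle of rank $n$ (the fibre over $x$ being the lines of $\mathbb{P}^{n+1}$ through $x$). Writing $\xi=c_1(\mathcal{O}_{P_X}(1))$, one has $\mathrm{CH}^n(P_X)=\bigoplus_{i=0}^n\xi^i\cdot p_X^*\mathrm{CH}^{n-i}(X)$ and the analogous decomposition in cohomology, so I would first write $Z=\sum_i\xi^i p_X^*z_i$ with $z_i\in\mathrm{CH}^{n-i}(X)$, the hypothesis $[Z]=0$ forcing $[z_i]=0$ for all $i$. As $X$ is rationally connected, $\mathrm{CH}_0(X)=\mathbb{Z}$, so the top term $z_n\in\mathrm{CH}_0(X)$ is homologically trivial, hence vanishes, and one is left with $0\le i\le n-1$. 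Resolving the rational map $\Phi\circ\mu$ and setting $\phi:=p_X\circ\Phi\circ\mu\colon X\times X\dashrightarrow X$ for the residual-point map $(x,y)\mapsto z$, pullback is multiplicative away from the indeterminacy locus of $\Phi\circ\mu$, which lies over proper closed subsets of both factors; so modulo cycles supported on some $D\times X$ it is enough to treat each $(\mu^*\Phi^*\xi)^i\cdot\phi^*z_i$.

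The next step is to pin down the divisor class $\mu^*\Phi^*\xi$ on $X\times X$. Restricting to the fibres $\{x\}\times X$ and $X\times\{x\}$ exhibits it as a symmetric integral combination $a(H_1+H_2)$ of the hyperplane classes $H_1=\mathrm{pr}_1^*H$, $H_2=\mathrm{pr}_2^*H$ (when $n=2$ there is an extra contribution, harmless after multiplication by a suitable integer by an argument in the spirit of Lemma~\ref{leP2}). Then $(\mu^*\Phi^*\xi)^i=a^i(H_1+H_2)^i$, and expanding the binomial, every monomial carrying a positive power of $H_1$ gives a cycle supported on $V\times X$ with $V$ a proper linear section of $X$; so one is reduced to $\sum_i a^i H_2^i\cdot\phi^*z_i$. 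To handle this I would bring in the correspondence $I\subset X\times X\times X$ of collinear triples: as cycles $I=\Gamma_\phi+\Delta_{13}+\Delta_{23}$ (graph of $\phi$ plus the two partial diagonals), and in cohomology $[I]$ is a polynomial in $H_1,H_2,H_3$, since the analogous locus in $(\mathbb{P}^{n+1})^3$ is. This gives $\phi^*z_i=I_*(z_i)-\mathrm{pr}_1^*z_i-\mathrm{pr}_2^*z_i$. Intersecting with $H_2^i$: the term $H_2^i\cdot\mathrm{pr}_1^*z_i=z_i\boxtimes H^i$ is supported on $D\times X$ since $z_i$ has positive codimension, and $H_2^i\cdot\mathrm{pr}_2^*z_i=X\times(H^i\cdot z_i)$ vanishes because $H^i\cdot z_i$ is a degree-$0$ zero-cycle on $X$ and $\mathrm{CH}_0(X)=\mathbb{Z}$.

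Everything then reduces to showing that $3\,H_2^i\cdot I_*(z_i)$ is rationally equivalent to a cycle supported over a proper closed subset of the first factor; this is the crux, and the source of the factor $3$. The obstruction is that $I$ is $S_3$-invariant, so the naive bookkeeping treats the three collinear points symmetrically and one of the partial contributions lives on $X\times D$, not $D\times X$. To get around this I would pass to the universal line $\mathcal{P}\to X\times X$ (the pullback of the universal line over $G(1,n+1)$ along $(x,y)\mapsto\langle x,y\rangle$), which carries three sections $\sigma_x,\sigma_y,\sigma_z$ cut out by the three points of $\langle x,y\rangle\cap X$; since $X\in|\mathcal{O}_{\mathbb{P}^{n+1}}(3)|$, for $\beta\colon\mathcal{P}\to\mathbb{P}^{n+1}$ one has $[\sigma_x]+[\sigma_y]+[\sigma_z]=3\,\beta^*\mathcal{O}_{\mathbb{P}^{n+1}}(1)$ in $\mathrm{CH}^1(\mathcal{P})$ (both sides restricting to degree $3$ on each fibre). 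Pulling this relation back through the $\sigma$'s and pushing $z_i$ through the resulting correspondences rewrites $3\,\phi^*z_i$ as a combination of cycles of the three harmless types above; combining all the steps, $3\,\mu^*\Phi^*Z$ is rationally equivalent to a cycle supported on $D\times X$. The hard part is precisely this last step: arranging the three-section relation so that after intersecting with $H_2^i$ and pushing forward, the factor $3$ is exactly what is needed to trade the $X\times D$-supported piece for pieces supported on $D\times X$ (or $\mathrm{CH}_0$-trivial) — the $3$ being forced by $\deg\bigl(\mathcal{O}_{\mathbb{P}^{n+1}}(3)|_{\mathrm{line}}\bigr)=\deg(X\cap\mathrm{line})=3$.
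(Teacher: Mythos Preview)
Your reduction in the first paragraph contains a genuine error that breaks the argument. You claim that the indeterminacy locus of $\Phi\circ\mu$ ``lies over proper closed subsets of both factors'', and use this to replace $\mu^*\Phi^*(\xi^i\cdot p_X^*z_i)$ by $(\mu^*\Phi^*\xi)^i\cdot\phi^*z_i$ modulo cycles supported on some $D\times X$. But this is false: the indeterminacy locus contains the diagonal $\Delta_X$ (where $\mu$ is undefined) and, for $n\ge 3$, the locus $\{(x,y):\langle x,y\rangle\subset X\}$; both project \emph{surjectively} onto each factor (for $n\ge 3$ every point of $X$ lies on a line of $X$). So the correction terms coming from the exceptional divisors of the resolution $\tilde\tau:\widetilde{\widetilde{X\times X}}\to X\times X$ are \emph{not} supported on $D\times X$, and you cannot discard them. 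Pullback along the rational map $\Psi=\Phi\circ\mu$ is defined as $\tilde\tau_*\tilde\Psi^*$, which is not a ring homomorphism; the failure of multiplicativity lives exactly on $\tilde\tau(E_\Delta\cup E_{P_2})=\Delta_X\cup q_2(P\times_FP)$, neither of which has the shape you need.

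The paper avoids this by factoring out only \emph{one} copy of the tautological class: since $z_0=0$ one writes $Z=l\cdot Z'$, and then $\Psi^*(Z)=\tilde\tau_*\bigl(\tilde\Psi^*l\cdot\tilde\Psi^*Z'\bigr)$ with $\tilde\Psi^*l=\alpha\,\tilde\tau^*(H_1+H_2)+\beta E_\Delta+\gamma E_{P_2}$. The $E_\Delta$-term pushes forward to a cycle supported on $\Delta_X$, hence a multiple of $\Delta_X$, hence zero because it is cohomologous to zero; the $E_{P_2}$-term is handled by Lemma~\ref{leP2}. This is precisely the analysis you are missing. After that, one is left with $H_1\cdot W_1+H_2\cdot W_2$ with $W_i$ homologous to zero, and the factor $3$ enters through the self-intersection formula $3H_2\cdot W_2=i_2^*i_{2*}(W_2)$ for $i_2:X\times X\hookrightarrow X\times\mathbb{P}^{n+1}$, followed by the projective-bundle decomposition on $X\times\mathbb{P}^{n+1}$ --- a cleaner route than your three-section relation on $\mathcal P$, which you yourself flag as the hard part and do not actually carry out. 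Your relation $[\sigma_x]+[\sigma_y]+[\sigma_z]=3\beta^*H$ is correct and morally encodes the same degree-$3$ phenomenon, but as written you have not shown how to convert it into the statement that $3H_2^i\cdot I_*(z_i)$ is supported on $D\times X$.
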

\begin{proof} Recall that $p_X:P_X\rightarrow X$ is the
$\mathbb{P}^n$-bundle over $X$ with fiber over $x\in X$ the set of
lines in $\mathbb{P}^{n+1}$ through $x$. Let us denote by $l\in {\rm
CH}^1(P_X)$ the class of $\mathcal{O}_{P_X}(1)$ (we  choose for
$\mathcal{O}_{P_X}(1)$ the pull-back of the Pl\"ucker line bundle on
the Grassmannian of lines $G(1,\mathbb{P}^{n+1})$). The cycle $Z$
can be written as
\begin{eqnarray}\label{eqcycle27juin}
Z=p_X^*Z_0+lp_X^*Z_1+\ldots+l^{n}p_X^* Z_n, \end{eqnarray} where
$Z_i$ are cycles of codimension $n-i$ on $X$. Note that the cycles
$Z_i$ are all homologous to $0$. As ${\rm dim}\,X\geq 2$, ${\rm
CH}_0(X)_{hom}=0$ and thus $Z_0=0$ in ${\rm CH}_0(X)$. Hence
(\ref{eqcycle27juin}) shows that $Z=l\cdot Z'$ for some $Z'\in {\rm
CH}(P_X)$. Let $\Psi:=\Phi\circ \mu:X\times X\dashrightarrow P_X$
and
 let $\widetilde{\Psi}: \widetilde{\widetilde{X\times X}}\rightarrow
P_X$ be the desingularization of $\Psi$ obtained by blowing-up first
the diagonal of $X$, and then the inverse image of $P_2\subset
X^{[2]}$ (see Proposition \ref{propX2}). Let
$\tilde{\tau}:\widetilde{\widetilde{X\times X}}\rightarrow X\times
X$ be the composition of the two blow-ups. There are two exceptional
divisors of $\tilde{\tau}$, namely $E_\Delta$ and $E_{P_2}$. We thus
have (using the fact that $\widetilde{\Psi}$ factors through
$X^{[2]}$)
$$\widetilde{\Psi}^*(l)=\alpha \tilde{\tau}^*(H_1+H_2)+\beta E_{\Delta}+\gamma
E_{P_2},$$ where the explicit computation of the coefficients
$\alpha,\,\beta,\,\gamma$ can be done but is not useful here.

It follows that
\begin{eqnarray}\label{eqcycle27juin2}
\Psi^*(Z)=\Psi^*(l\cdot Z')=\tilde{\tau}_*(\widetilde{\Psi}^*(l\cdot
Z'))=
\tilde{\tau}_*(\widetilde{\Psi}^*(l)\cdot \widetilde{\Psi}^*(Z'))\\
\nonumber = \tilde{\tau}_*((\alpha \tilde{\tau}^*(H_1+H_2)+\beta
E_{\Delta}+\gamma E_{P_2})\cdot \widetilde{\Psi}^*(Z'))\,\,{\rm
in}\,\,{\rm CH}(X\times X).
\end{eqnarray}
Now, we develop the last expression and  observe again that
$\tilde{\tau}_*(\beta E_{\Delta}\cdot \widetilde{\Psi}^*(Z'))$ is
supported on the diagonal of $X$, it must be proportional to
$\Delta_X$, hence in fact identically $0$ as it is cohomologous to
$0$. Next, the cycle $\tilde{\tau}_*(\gamma E_{P_2}\cdot
\widetilde{\Psi}^*(Z'))$ comes from a  $n$-cycle $Z''$ homologous to
$0$ on $P\times_F P$, i.e.
\begin{eqnarray}\label{eqcycle27juin3}
\tilde{\tau}_*(\gamma E_{P_2}\cdot
\widetilde{\Psi}^*(Z'))={q_2}_*(Z''),
\end{eqnarray}
where $q_2:P\times_FP\rightarrow X\times X$ is introduced above. We
can then apply Lemma \ref{leP2} to conclude that the cycle
$\tilde{\tau}_*(\gamma E_{P_2}\cdot \widetilde{\Psi}^*(Z'))$ is
supported on $D\times X$ for some $D\subsetneqq X$.
  Thus we conclude from (\ref{eqcycle27juin2}), the
projection formula, and the analysis above that
\begin{eqnarray}\label{eqcycle27juin5}
\Psi^*(Z)=H_1\cdot W_1+H_2\cdot W_2+W \,\,{\rm in}\,\,{\rm
CH}(X\times X),
\end{eqnarray}
where $W$ is  supported on $D\times X$ for some $D\subsetneqq X$,
and $W_1,\,W_2$ are cycles homologous  to $0$ on $X\times X$. It
thus suffices to show that cycles $\Gamma$ on $X\times X$ of the
form $H_1\cdot W_1$ and $H_2\cdot W_2$ with $W_i$ homologous to $0$
on $X\times X$ have the property that $3\Gamma$ is rationally
equivalent to a cycle supported on $D\times X$ for some proper
closed algebraic subset of $X$. For $H_1\cdot W_1$ this is obvious,
and the coefficient $3$ is not needed. For $H_2\cdot W_2$, we
observe that if $i_2:X\times X\rightarrow X\times \mathbb{P}^{n+1}$
denotes the natural inclusion, we have
$$3H_2\cdot W_2=i_2^*\circ i_{2*}(W_2)\,\,{\rm in}\,\,{\rm
CH}(X\times X).$$ Now the cycle $i_{2*}(W_2)\in {\rm
CH}_{n+1}(X\times \mathbb{P}^{n+1})$ is homologous to $0$. Using its
decomposition $$ i_{2*}(W_2)=\sum_i pr_1^*\gamma_i\cdot
pr_2^*L^{n-i},$$ with $\gamma_i\in {\rm CH}^i(X)_{hom}$ and
$L=c_1(\mathcal{O}_{\mathbb{P}^{n+1}}(1))\in {\rm
CH}^1(\mathbb{P}^{n+1})$, we thus conclude that $\gamma_0=0$, hence
that $i_{2*}(W_2)=\sum_{i>0} pr_1^*\gamma_i\cdot pr_2^*L^{n-i}$ in
${\rm CH}^n(X\times \mathbb{P}^{n+1})$. Hence $i_{2*}(W_2)$ is
rationally equivalent to a cycle supported on $D\times
\mathbb{P}^{n+1}$ for some proper closed algebraic subset $D$ of
$X$, and thus $3H_2\cdot W_2=i_2^*\circ i_{2*}(W_2)$ is rationally
equivalent to a cycle supported on $D\times X$ for some proper
closed algebraic subset $D$ of $X$.
\end{proof}
The rest of the proof goes as before, using again  Lemma \ref{leP2},
and this allows to conclude that, if $X$ admits a cohomological
decomposition of the diagonal, then $ 3(\Delta_X-X\times x)$ is
rationally equivalent to a cycle supported on $D\times X$, for some
$D\subsetneqq X$. On the other hand, as $X$ admits a unirational
parametrization of degree $2$ (see \cite{clemensgriffiths}), we also
know that $ 2(\Delta_X-X\times x)$ is rationally equivalent to a
cycle supported on $D'\times X$ , for some $D'\subsetneqq X$. It
follows  that $X$ admits a Chow-theoretic decomposition of the
diagonal.
\end{proof}
\section{Criteria for the cohomological decomposition of the diagonal
\label{sec3}} This section is devoted to the existence  of
cohomological decomposition of the diagonal. Our main result here
is the following criterion for such a decomposition to exist:

\begin{theo} \label{theodeccohdutext} (cf. Theorem \ref{theocohdecomp}) Let $X$ be smooth
projective of dimension $n$. If $X$ admits a cohomological
decomposition of the diagonal,
 then the following condition (*)
is satisfied:

(*) There exist smooth projective varieties $Z_i$ of dimension $n-2$,  correspondences
$\Gamma_i\in {\rm CH}^{n-1}(Z_i\times X)$, and integers
$n_i$, such that for any
$\alpha,\,\beta\in H^n(X,\mathbb{Z})$,
\begin{eqnarray} \label{eqint10ju}\langle  \alpha,\beta\rangle_X=\sum_in_i\langle  \Gamma_i^*\alpha,\Gamma_i^*\beta\rangle_{Z_i}.
\end{eqnarray}
Conversely,  assume condition (*) and furthermore

(i) $H^{2i}(X,\mathbb{Z})$ is algebraic for $2i\not=n$ and
$H^{2i+1}(X,\mathbb{Z})=0$ for $2i+1\not=n$.

(ii) $H^*(X,\mathbb{Z})$ has no torsion.

Then $X$ admits a cohomological decomposition of the diagonal.
\end{theo}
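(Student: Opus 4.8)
The plan is to prove the two implications separately. The forward implication is the more formal one: suppose $[\Delta_X]=[X\times x]+[Z]$ in $H^{2n}(X\times X,\mathbb{Z})$ with $Z$ supported on $D\times X$ for a proper closed $D\subsetneq X$. Since $[X\times x]^{*}=0=[X\times x]_{*}$ on $H^n(X,\mathbb{Z})$ for $n>0$, and likewise $[x\times X]^{*}=0$, the correspondence $[Z]$ (and $[{}^tZ]$) acts as the identity on $H^n(X,\mathbb{Z})$; in particular $[Z]_{*}=\mathrm{id}$ on $H^n(X,\mathbb{Z})$. Enlarging $D$ and resolving, one reduces — this is the one point requiring care, see the last paragraph — to the case where $D$ is a smooth very ample divisor, with inclusion $j\colon D\hookrightarrow X$, and writes $Z=(j\times\mathrm{id}_X)_{*}\widetilde Z$ with $\widetilde Z\in {\rm CH}^{n-1}(D\times X)$. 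Then $\widetilde Z^{*}\colon H^n(X,\mathbb{Z})\to H^{n-2}(D,\mathbb{Z})$ and $j_{*}\colon H^{n-2}(D,\mathbb{Z})\to H^n(X,\mathbb{Z})$ satisfy $j_{*}\circ\widetilde Z^{*}=[Z]_{*}=\mathrm{id}$, so for $\alpha,\beta\in H^n(X,\mathbb{Z})$ the projection formula gives $\langle\alpha,\beta\rangle_X=\langle j^{*}\alpha,\widetilde Z^{*}\beta\rangle_D$, while the self-intersection formula gives $j^{*}\alpha=j^{*}j_{*}(\widetilde Z^{*}\alpha)=(\widetilde Z^{*}\alpha)\cup c_1(N_{D/X})$. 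Hence
$$\langle\alpha,\beta\rangle_X=\int_D(\widetilde Z^{*}\alpha)\cup(\widetilde Z^{*}\beta)\cup c_1(N_{D/X}).$$
Writing $c_1(N_{D/X})=[A_1]-[A_2]$ with $A_1,A_2\subset D$ smooth very ample divisors (hence smooth projective of dimension $n-2$), and setting $\Gamma_i:=(\iota_{A_i}\times\mathrm{id}_X)^{*}\widetilde Z\in {\rm CH}^{n-1}(A_i\times X)$ so that $\Gamma_i^{*}\alpha=(\widetilde Z^{*}\alpha)|_{A_i}$, we obtain $\langle\alpha,\beta\rangle_X=\langle\Gamma_1^{*}\alpha,\Gamma_1^{*}\beta\rangle_{A_1}-\langle\Gamma_2^{*}\alpha,\Gamma_2^{*}\beta\rangle_{A_2}$, which is condition (*) with two terms and $n_1=1$, $n_2=-1$.

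For the converse, assume (*) together with (i) and (ii). By (ii), $H^{2n}(X\times X,\mathbb{Z})=\bigoplus_{p+q=2n}H^p(X,\mathbb{Z})\otimes H^q(X,\mathbb{Z})$, and we decompose $[\Delta_X]=\sum_{p+q=2n}\delta_{p,q}$ accordingly. For $p\ne n$: by (i), if $p$ or $2n-p$ is odd the corresponding cohomology group vanishes, so $\delta_{p,q}=0$ unless $p$ and $2n-p$ are even, in which case both $H^p(X,\mathbb{Z})$ and $H^{2n-p}(X,\mathbb{Z})$ are generated by classes of algebraic cycles (and $\delta_{0,2n}=[X\times x]$, $\delta_{2n,0}=[\{x\}\times X]$). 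Therefore $\sum_{p\ne n}\delta_{p,q}$ is represented by an algebraic cycle $Y$ which, after removing the term $X\times x$, is supported on $D_0\times X$ for some proper closed $D_0\subsetneq X$: each $\delta_{p,q}$ with $0<p\le 2n$, $p\ne n$, is a combination of classes $[\gamma\times\eta]$ with $\gamma,\eta$ algebraic of complementary dimensions and $\dim\gamma=n-p/2<n$. Thus it remains only to show that $\delta_{n,n}$ is the class of a cycle supported over a proper closed subset, and this is where (*) is used. From (*), the symmetric self-correspondence $C:=\sum_i n_i\,\Gamma_i\circ{}^t\Gamma_i\in {\rm CH}^n(X\times X)$ satisfies $\langle C_{*}\alpha,\beta\rangle_X=\sum_i n_i\langle\Gamma_i^{*}\alpha,\Gamma_i^{*}\beta\rangle_{Z_i}=\langle\alpha,\beta\rangle_X$ for all $\alpha,\beta$, hence $C_{*}=\mathrm{id}$ on $H^n(X,\mathbb{Z})$ by (ii); furthermore $C$ is supported on $D'\times D'$ with $D'=\bigcup_i p_X({\rm Supp}\,\Gamma_i)\subsetneq X$, since $\dim {\rm Supp}\,\Gamma_i\le n-1$. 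Choosing the base point $x\notin D'$, the class $[C]$ has no $(0,2n)$ or $(2n,0)$ Künneth component, its $(n,n)$-component equals $\delta_{n,n}$ (both corresponding to $\mathrm{id}\in{\rm End}\,H^n(X,\mathbb{Z})$ under Poincaré duality, using the symmetry of $C$), and — again by (i) — every other Künneth component $[C]_{p,q}$ with $p\ne n$ is a combination of classes $[\gamma\times\eta]$ with $\dim\gamma<n$. Subtracting from $C$ honest algebraic cycles representing these remaining components yields a cycle $\widetilde C$ supported over a proper closed subset of $X$ with $[\widetilde C]=\delta_{n,n}$. Then $[\Delta_X]=[X\times x]+[(Y-X\times x)+\widetilde C]$, the bracketed cycle being supported on $D''\times X$ for some $D''\subsetneq X$, which is a cohomological decomposition of the diagonal.

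The main obstacle is the reduction, in the forward implication, to the case where $D$ is a smooth divisor: one must ensure that resolving the singularities of $D$ does not destroy the integrality of the identity $j_{*}\widetilde Z^{*}=\mathrm{id}$, since the self-intersection formula $j^{*}j_{*}=(\,\cdot\,)\cup c_1(N_{D/X})$ requires a regular embedding. Equivalently, one can run the computation directly for the (possibly singular) Cartier divisor $D$ using the self-intersection formula for Cartier divisors and the class $c_1(\mathcal{O}_X(D)|_D)$, or on a resolution $\rho\colon\widetilde D\to D$ while controlling $\rho^{*}\rho_{*}-\mathrm{id}$, whose defect is supported over a proper closed subset of smaller dimension and contributes further correspondences $\Gamma_i$ of dimension $\le n-2$ (which may be padded up to dimension exactly $n-2$). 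Apart from this the argument is bookkeeping: the Künneth decomposition, the choice of $x$ avoiding finitely many proper closed subsets, and the standard dictionary between correspondences acting on $H^n$ and bilinear forms on $H^n(X,\mathbb{Z})$.
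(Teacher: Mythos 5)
Your converse direction (condition (*) together with (i), (ii) implies the decomposition) is correct and is essentially the paper's own argument: your $C=\sum_i n_i\,\Gamma_i\circ{}^t\Gamma_i$ is exactly the correspondence $\sum_i n_i(\Gamma_i,\Gamma_i)_*\Delta_{Z_i}$ used there, with the same K\"unneth bookkeeping. The problem is the forward direction. The reduction you rely on, ``one reduces to the case where $D$ is a smooth very ample divisor'', is not available: $D$ must contain the image in $X$ of the support of the given cycle $Z$, and that is in general a reducible, singular divisor which is not contained in any smooth divisor; no enlargement or blow-up produces a single smooth $D$ carrying $Z$. What one can do (and what the paper does) is blow up $X$ so that $Z$ is supported over a global normal crossing divisor and lift $Z$ to the normalization $\sqcup_i D_i$, obtaining correspondences $\Gamma_i$ on $D_i\times X$ with $\sum_i k_{i*}\circ\Gamma_i^*=\mathrm{id}$ on $H^n$; one must also check that proving (*) on the blow-up suffices. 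In that unavoidable reducible situation your computation handles only the diagonal terms $\langle k_{i*}\Gamma_i^*\alpha,k_{i*}\Gamma_i^*\beta\rangle_X=\langle\delta_i\smile\Gamma_i^*\alpha,\Gamma_i^*\beta\rangle_{D_i}$ (where indeed writing $\delta_i$ as a difference of smooth very ample members gives terms of the shape (*), as in your smooth case); but there are also cross terms $\langle k_{i*}\Gamma_i^*\alpha,k_{j*}\Gamma_j^*\beta\rangle_X=\langle\Gamma_{ij}^*\alpha,\Gamma_{ji}^*\beta\rangle_{W_{ij}}$ on $W_{ij}=D_i\cap D_j$, which involve two \emph{different} correspondences and are therefore not of the shape allowed in (*).

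This is exactly where your escape paragraph falls short: asserting that the defect of $\rho^*\rho_*$ ``contributes further correspondences $\Gamma_i$ of dimension $\le n-2$'' presupposes that these contributions are again of the diagonal form $n_i\langle\Gamma_i^*\alpha,\Gamma_i^*\beta\rangle_{Z_i}$, which is precisely what needs proof. The missing idea is that the cross terms occur in mirror pairs (the $(i,j)$ and $(j,i)$ contributions both live on $W_{ij}$), so that the integral polarization identity $\langle\Gamma_{ij}^*\alpha,\Gamma_{ji}^*\beta\rangle+\langle\Gamma_{ji}^*\alpha,\Gamma_{ij}^*\beta\rangle=\langle(\Gamma_{ij}+\Gamma_{ji})^*\alpha,(\Gamma_{ij}+\Gamma_{ji})^*\beta\rangle-\langle\Gamma_{ij}^*\alpha,\Gamma_{ij}^*\beta\rangle-\langle\Gamma_{ji}^*\alpha,\Gamma_{ji}^*\beta\rangle$ converts them into allowed terms; this symmetrization is the crux of this implication and is absent from your sketch (note that averaging over $\alpha\leftrightarrow\beta$ is not permitted integrally). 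Your other fallback, running the computation directly on the singular Cartier divisor $D$, also fails: on a singular $D$ the classes $\widetilde Z^*\alpha,\widetilde Z^*\beta$ live only in Borel--Moore homology, there is no cup product or intersection pairing between them, and in any case (*) demands smooth projective $Z_i$; so one is forced back to the resolved, reducible model where the cross terms appear and must be symmetrized as above.
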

Here $\Gamma_i^*:H^n(X,\mathbb{Z})\rightarrow H^{n-2}(Z_i,\mathbb{Z})$ is as usual defined by
$$\Gamma_i^*(\alpha):={pr_{Z_i}}_*(pr_X^*\alpha\smile [\Gamma_i]),$$
where $pr_{Z_i},\,pr_X$ are the projections from $Z_i\times X$ to
its factors. Let us comment on assumptions (i) and (ii). If $X$ is a
 complete intersection of dimension $n$, the integral cohomology
of $X$ has no torsion and the groups $H^{2i}(X,\mathbb{Z})$ are
cyclic generated by $h^i$, $h=c_1(\mathcal{O}_X(1))$ for $i<  n/2$.
For $i>n/2$, they are also cyclic but the generator is now
$\frac{1}{d}h^i$, where $d={\rm deg}\,X$ and it is not true in
general that they are generated by a cycle class, except when $X$ is
Fano and  $n=4$ (resp. $n=3$), in which case  $H^6(X,\mathbb{Z})$
(resp. $H^4(X,\mathbb{Z})$) is generated by the class of a line in
$X$, and some sporadic cases. Note that in any case the class $ h^i$
is algebraic for any $i$, and in some cases this can be used as a
substitute assumption in Theorem \ref{theodeccohdutext}, like smooth
cubic hypersurfaces (see Corollary
 \ref{rema18juill}).
Another interesting class of varieties  which satisfy these two
properties, needed  in order to apply Theorem \ref{theodeccohdutext}
below, is the class of rationally connected threefolds with trivial
Artin-Mumford invariant, for which it is proved in
\cite{voisinuniruled} that $H^4(X,\mathbb{Z})$ is algebraic. In this
case, one gets Theorem \ref{cororc3} which improves \cite[Corollary
4.5 and Theorem 4.9]{voisinjag}.
\begin{proof}[Proof of Theorem \ref{theodeccohdutext}] Let us first prove that assuming (i) and (ii),
  condition (*) implies that
$X$ admits a cohomological decomposition of the diagonal. So let
$Z_i,\,\Gamma_i$ be as above and satisfy (\ref{eqint10ju}). As ${\rm
dim}\,Z_i=n-2$, and ${\rm codim}\,\Gamma_i=n-1$, the $\Gamma_i$'s
are $(n-1)$-cycles in $Z_i\times X$. We denote by
$(\Gamma_i,\Gamma_i)\in {\rm CH}^{2n-2}(Z_i\times Z_i\times X\times
X)$ the correspondence $p_{13}^*\Gamma_i\cdot p_{24}^*\Gamma_i$
between $Z_i\times Z_i$ and $X\times X$, where the $p_{rs}$ are the
projectors from $Z_i\times Z_i\times X\times X$ to the product of
two of its factors. Observe that $(\Gamma_i,\Gamma_i)_*\Delta_{Z_i}$
is supported on $D_i\times D_i$, where $D_i\subsetneqq X$ is defined
as the image of ${\rm Supp}\,\Gamma_i$ in $X$ by the second
projection. Let
$$\Gamma:=\sum_in_i (\Gamma_i,\Gamma_i)_*\Delta_{Z_i}\in {\rm CH}^n(X\times X).$$
Equation (\ref{eqint10ju}) can be written as
\begin{eqnarray} \label{eqint10ju1}\langle  \alpha,\beta\rangle_X=\int_{X\times X} pr_1^*\alpha\smile pr_2^*\beta\smile [\Gamma]
\end{eqnarray}
for any degree $n$ classes $\alpha,\beta$ on $X$.
It follows that the class
\begin{eqnarray} \label{eqint10ju2}[\Delta_X]-[\Gamma]\in H^{2n}(X\times X,\mathbb{Z})\cong
 {\rm End}_0\,(H^*(X,\mathbb{Z}))
\end{eqnarray}
annihilates  $H^n(X,\mathbb{Z})$. In (\ref{eqint10ju2}), ${\rm
End}_0$ denotes the group of degree preserving endomorphisms. The
isomorphism $H^{2n}(X\times X,\mathbb{Z})\cong {\rm
End}_0\,(H^*(X,\mathbb{Z}))$ is a consequence, by K\"unneth
decomposition and Poincar\'e duality, of the fact that
$H^*(X,\mathbb{Z})$ has no torsion.

It follows that we have (again by K\"unneth decomposition)
\begin{eqnarray} \label{eqint10ju3}[\Delta_X]-[\Gamma]\in \oplus_{i\not=n} H^i(X,\mathbb{Z})\otimes H^{2n-i}(X,\mathbb{Z})
\subset H^{2n}(X\times X,\mathbb{Z}).
\end{eqnarray}
On the other hand, condition (i) tells us that $H^{*\not=n}(X,\mathbb{Z})$ consists of classes of algebraic cycles, so that (\ref{eqint10ju3}) becomes

\begin{eqnarray} \label{eqint10ju4}[\Delta_X]-[\Gamma]=\sum_i  pr_1^*[W_i]\smile pr_2^* [W'_i]
\end{eqnarray}
for some cycles $W_i,\,W'_i$ of $X$ with ${\rm dim}\,W_i+{\rm
dim}\,W'_i=n$. The right-hand side of (\ref{eqint10ju4}) is of the
form
$$[X\times x]+\sum_{i,\,{\rm dim}\,W'_i>0}  [pr_1^*W_i\cdot  pr_2^* W'_i]$$ and
clearly $\sum_{i,\,{\rm dim}\,W'_i>0}  pr_1^*W_i\cdot  pr_2^* W'_i$
is supported on $D'\times X$ for some proper closed algebraic subset
$D'\subsetneqq X$. Hence we get
$$[\Delta_X]-[X\times x]=[\Gamma]+\sum_{i,\,{\rm dim}\,W'_i>0}  [pr_1^*W_i\cdot  pr_2^* W'_i]=[Z],$$
where the cycle $Z=\Gamma+\sum_{i,\,{\rm dim}\,W'_i>0}
pr_1^*W_i\cdot pr_2^* W'_i$
 is supported on $(\cup D_i\cup D')\times X$.

We now  prove conversely that  condition (*) is implied by the existence
of  a cohomological decomposition of the diagonal of $X$.
Let $D\subset X$ be a divisor and
$Z\subset D\times X$ be an $n$-cycle such that
$$ [Z]=[\Delta_X]-[X\times x]\,\,{\rm in}\,\,H^{2n}(X\times X,\mathbb{Z}).$$
We first claim that we can assume that $D$ is a normal crossing
divisor.  In order to achieve this, let $\tau:X'\rightarrow X$ be a
blow-up of $X$ such that a global normal crossing divisor $D'\subset
X'$ dominates $D$. Enlarging $D$ if necessary, we can assume that
$Z$ lifts to a $n$-cycle $Z'\in {\rm CH}^n(D''\times X')$, where
$D''=\sqcup_iD_i$ is the normalization of $D'$. (Indeed, it suffices
to choose $D$ in such a way that for each irreducible component
$Z_i$ of the support of $Z$, $D$  has at least one component which
is generically smooth along $Z_i$.)
 It follows easily that the cycle class
$$[\Delta_{X'}]-[X'\times x]\,\,\in\,\,H^{2n}(X'\times X')$$
is the class of a cycle $Z_1$ supported on $(D'\cup E)\times X'$,
where $E$ is the exceptional divisor of $\tau$. The divisor
$D'_1=D'\cup E$ can be also assumed to have global normal crossings
and thus the cycle $Z_1$ lifts to  a $n$-cycle $Z'_1\in {\rm
CH}^n(D''_1\times X')$, where $D''_1=\sqcup_iD_{1,i}$ is the
normalization of $D'_1$. On the other hand, as we
 have
 $\langle  \alpha,\beta\rangle_X=\langle  \tau^*\alpha,\tau^*\beta\rangle_{X'}$ for $\alpha,\,\beta\in H^n(X,\mathbb{Z})$,
 it suffices to prove (*) for $X'$.
 This proves our claim.

 From now on, we thus assume $X=X'$ and
 $D$ is a global normal crossing divisor in $X$ with normalization
 $\sqcup_iD_i$, so that $Z$ lifts to a cycle $\widetilde{Z}$ in $(\sqcup_iD_i)\times X$.
Let us denote by $\Gamma_i\in {\rm CH}^n(D_i\times X)$ the
restriction of $\widetilde{Z}$ to the connected component $D_i\times
X$. Let $k_i:D_i\rightarrow X$ be the inclusion map. We have
\begin{eqnarray}\label{eqpreuvenec1}
\sum_i(k_i,Id_X)_*[\Gamma_i]=[Z]=[\Delta_X]-[X\times x]\,\,{\rm
in}\,\,H^{2n}(X\times X).
\end{eqnarray}
We can of course  assume $n>0$, so that $[Z]^*\alpha=\alpha$ for any
$\alpha\in H^n(X,\mathbb{Z})$. Then  (\ref{eqpreuvenec1})  gives the
following equality, for any $\alpha,\,\beta\in H^n(X,\mathbb{Z})$
\begin{eqnarray}\label{eqpreuvenec2}
 \langle \alpha,\beta\rangle_X=\langle
[Z]^*\alpha,[Z]^*\beta\rangle_X  =\langle
\sum_i((k_i,Id_X)_*[\Gamma_i])^*\alpha,\sum_i
((k_i,Id_X)_*[\Gamma_i])^*\beta\rangle_X.
\end{eqnarray}
We now develop the last expression, which gives for all
$ \alpha,\,\beta\in H^n(X,\mathbb{Z})$:
\begin{eqnarray}\label{eqpreuvenec3}
\langle\alpha,\beta\rangle_X=\sum_{i,j}\langle
((k_i,Id_X)_*[\Gamma_i])^*\alpha,
((k_j,Id_X)_*[\Gamma_j])^*\beta\rangle_X.
\end{eqnarray}
Note now that
$$((k_i,Id_X)_*[\Gamma_i])^*\alpha=k_{i*}([\Gamma_i]^*\alpha)\,\,{\rm in}\,\,H^n(X,\mathbb{Z})$$
and similarly for $(k_j,Id_X)_*[\Gamma_j])_*\beta$. Hence
(\ref{eqpreuvenec3}) becomes
\begin{eqnarray}\label{eqpreuvenec4}
\langle \alpha,\beta\rangle_X=\sum_{i,j}\langle
k_{i*}([\Gamma_i]^*\alpha), k_{j*}([\Gamma_j]^*\beta)\rangle_X,
\end{eqnarray}
where $[\Gamma_i]^*\alpha\in H^{n-2}(D_i,\mathbb{Z})$ and similarly
$[\Gamma_j]^*\beta\in H^{n-2}(D_j,\mathbb{Z})$. Let
$\delta_i=k_i^*(D_i)\in {\rm CH}^1(D_i)$ and $[\delta_i]\in
H^2(D_i,\mathbb{Z})$ its cohomology class. As $k_i$ is an embedding,
we have $k_i^*\circ k_{i*}=[\delta_i]\smile:
H^{n-2}(D_i,\mathbb{Z})\rightarrow H^{n}(D_i,\mathbb{Z})$, and thus
\begin{eqnarray}\label{eqpreuvenec5} \langle k_{i*}([\Gamma_i]^*\alpha),
k_{i*}([\Gamma_i]^*\beta)\rangle_X=\langle [\delta_i]\smile
[\Gamma_i]^*\alpha,[\Gamma_i]^*\beta\rangle_{D_i}.
\end{eqnarray}
Write $\delta_i=\sum_{l}n_{il} Z_{il}$ where $n_{il}\in \mathbb{Z}$
and $Z_{il}$ is a smooth $(n-2)$-dimensional subvariety of $D_i$.
Then letting $\Gamma_{il}\in {\rm CH}^{n-1}(Z_{il}\times X)$ be the
pull-back of $\Gamma_i$ to $Z_{il}\times X$, (\ref{eqpreuvenec5})
can be written as
\begin{eqnarray}\label{eqpreuvenec6} \langle  k_{i*}([\Gamma_i]^*\alpha),
k_{i*}([\Gamma_i]^*\beta)\rangle_X= \sum_ln_{il}\langle
[\Gamma_{il}]^*\alpha,[\Gamma_{il}]^*\beta\rangle_{Z_{il}}.
\end{eqnarray}
The right hand side of this equation is exactly of the form allowed in
 (\ref{eqint10ju}) and it remains to analyze in (\ref{eqpreuvenec4}) the terms
$$\langle  k_{i*}([\Gamma_i]^*\alpha),
k_{j*}([\Gamma_j]^*\beta)\rangle_X+\langle
k_{j*}([\Gamma_j]^*\alpha), k_{i*}([\Gamma_i]^*\beta)\rangle_X$$ for
$i\not=j$. Denote by $W_{ij}$ the intersection $D_i\cap D_j$. It
admits two correspondences $\Gamma_{ij},\,\Gamma_{ji}\in {\rm
CH}^{n-1}(W_{ij}\times X)$, namely the restriction to $W_{ij}\times
X$ of $\Gamma_i\in {\rm CH}^{n-1}(D_i\times X)$ and the restriction
to $W_{ij}\times X$ of $\Gamma_j \in {\rm CH}^{n-1}(D_j\times X)$
respectively. With this notation, we have
\begin{eqnarray}\label{eqpreuvenec7} \langle  k_{i*}([\Gamma_i]^*\alpha),
k_{j*}([\Gamma_j]^*\beta)\rangle_X=\langle
[\Gamma_{ij}]^*\alpha,[\Gamma_{ji}]^*\beta\rangle_{W_{ij}},
\\
\nonumber \langle  k_{j*}([\Gamma_j]^*\alpha),
k_{i*}([\Gamma_i]^*\beta)\rangle_X=\langle
[\Gamma_{ji}]^*\alpha,[\Gamma_{ij}]^*\beta\rangle_{W_{ij}},
\end{eqnarray}

which provides
\begin{eqnarray}\label{eqpreuvenec8}\langle  k_{i*}([\Gamma_i]^*\alpha),
k_{j*}([\Gamma_j]^*\beta)\rangle_X+\langle
k_{j*}([\Gamma_j]^*\alpha), k_{i*}([\Gamma_i]^*\beta)\rangle_X
\\
\nonumber =\langle
[\Gamma_{ij}]^*\alpha,[\Gamma_{ji}]^*\beta\rangle_{W_{ij}}+
\langle  [\Gamma_{ji}]^*\alpha,[\Gamma_{ij}]^*\beta\rangle_{W_{ij}}\\
\nonumber =\langle
([\Gamma_{ij}]+[\Gamma_{ji}])^*\alpha,([\Gamma_{ij}]+[\Gamma_{ji}])^*\beta\rangle_{W_{ij}}-
\langle [\Gamma_{ij}]^*\alpha,[\Gamma_{ij}]^*\beta\rangle_{W_{ij}}
\\
\nonumber -\langle  [\Gamma_{ji}]^*\alpha,
[\Gamma_{ji}]^*\beta\rangle_{W_{ij}}.
\end{eqnarray}
Each of the terms appearing in the final expression of
(\ref{eqpreuvenec8}) is of the form allowed in
 (\ref{eqint10ju}), which concludes the proof.
\end{proof}
\begin{coro}\label{rema18juill} Let $X$ be a smooth cubic hypersurface. Then
$X$ admits a cohomological decomposition of the diagonal, (or
equivalently a Chow-theoretic one  if
$H^*(X,\mathbb{Z})/H^*(X,\mathbb{Z})_{alg}$ has no torsion,) if and
only if $X$ satisfies condition (*).
\end{coro}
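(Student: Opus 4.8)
The plan is as follows. One implication is free: that a cohomological (a fortiori a Chow\-theoretic) decomposition of the diagonal forces condition $(*)$ is exactly the first half of Theorem~\ref{theodeccohdutext} (equivalently Theorem~\ref{theocohdecomp}), which is proved there for an arbitrary smooth projective variety, hence applies to $X$. The content is the converse: $(*)$ should imply a cohomological decomposition of the diagonal of $X$. I would run the proof of the converse half of Theorem~\ref{theodeccohdutext} verbatim up to the precise point where hypothesis (i) is used. Hypothesis (ii) (torsion\-freeness of $H^*(X,\mathbb{Z})$) does hold for a smooth cubic hypersurface, being a complete intersection; but (i) can genuinely fail for cubics, since for $2j>n={\rm dim}\,X$ the cyclic group $H^{2j}(X,\mathbb{Z})$ need not be generated by an algebraic class. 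So I would replace that one step by an argument special to cubics, exploiting two facts: that the non\-middle cohomology of $X$ is, rationally, spanned by powers of $h=c_1(\mathcal{O}_X(1))$, which is algebraic, and that $X$ carries a unirational parametrization of degree~$2$ (see \cite{clemensgriffiths}).

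In detail: assuming $(*)$, I would form $\Gamma=\sum_i n_i(\Gamma_i,\Gamma_i)_*\Delta_{Z_i}\in{\rm CH}^n(X\times X)$ as in the proof of Theorem~\ref{theodeccohdutext}. This is an algebraic cycle supported on $D\times D$ for $D=\cup_iD_i\subsetneqq X$ (each $D_i$ being the image of ${\rm Supp}\,\Gamma_i$ under the second projection, of dimension $\leq n-1$), and $(*)$ together with the torsion\-freeness of $H^*(X,\mathbb{Z})$ gives that $[\Delta_X]-[\Gamma]$ annihilates $H^n(X,\mathbb{Z})$, hence lies in $\bigoplus_{i\neq n}H^i(X,\mathbb{Z})\otimes H^{2n-i}(X,\mathbb{Z})$ by the K\"unneth decomposition; since the odd cohomology of $X$ vanishes off the middle degree, this is $\bigoplus_{2j\neq n}H^{2j}(X,\mathbb{Z})\otimes H^{2n-2j}(X,\mathbb{Z})$. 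For $2j\neq n$ each factor $H^{2j}(X,\mathbb{Z})$ is infinite cyclic with $H^{2j}(X,\mathbb{Q})=\mathbb{Q}h^j$, and unimodularity of the cup\-product pairing $H^{2j}\times H^{2n-2j}\to\mathbb{Z}$ together with $\langle h^j,h^{n-j}\rangle=\deg X=3$ shows that above the middle degree the class $h^\bullet$ equals $\pm3$ times a generator; consequently the lattice $H^{2j}(X,\mathbb{Z})\otimes H^{2n-2j}(X,\mathbb{Z})$ is generated by the integral class $\tfrac13 h^j\otimes h^{n-j}$. Thus
$$[\Delta_X]-[\Gamma]=\sum_{2j\neq n} m_j\cdot \tfrac13\,h^j\otimes h^{n-j},\qquad m_j\in\mathbb{Z},$$
and inspecting the $(0,2n)$ and $(2n,0)$ K\"unneth components --- which vanish for $\Gamma$ because ${\rm Supp}\,\Gamma$ projects into a proper closed subset on each factor, while $\tfrac13 h^0\otimes h^n=1\otimes[{\rm pt}]=[X\times x]$ and $\tfrac13 h^n\otimes h^0=[x\times X]$ --- one gets $m_0=m_n=1$.

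Now I would multiply by $3$: $3([\Delta_X]-[\Gamma])=\sum_{2j\neq n}m_j\,h^j\otimes h^{n-j}$, and every term on the right is now an honest algebraic cycle class of the shape we need --- for $0<j<n$, $h^j\otimes h^{n-j}=[Y_j\times Y_{n-j}]$ with $Y_j$ a codimension\-$j$ linear section, hence supported on $Y_j\times X$ with $Y_j\subsetneqq X$; the $j=0$ term is $1\otimes h^n=3[X\times x]$ and the $j=n$ term is $3[x\times X]$, supported on $\{x\}\times X$; and $\Gamma$ itself is supported on $D\times D\subset D\times X$. Therefore $3[\Delta_X]-3[X\times x]=[W_3]$ in $H^{2n}(X\times X,\mathbb{Z})$ for a cycle $W_3$ supported on $D_0\times X$, $D_0\subsetneqq X$. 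On the other hand, since $X$ admits a degree\-$2$ unirational parametrization, $2\Delta_X-2(X\times x)$ is rationally, hence cohomologically, equivalent to a cycle $W_2$ supported on $D_1\times X$ for some $D_1\subsetneqq X$. Subtracting, $[\Delta_X]-[X\times x]=3([\Delta_X]-[X\times x])-2([\Delta_X]-[X\times x])=[W_3-W_2]$, and $W_3-W_2$ is supported on $(D_0\cup D_1)\times X$; this is the desired cohomological decomposition of the diagonal of $X$. Finally, when $H^*(X,\mathbb{Z})/H^*(X,\mathbb{Z})_{alg}$ has no torsion --- a fortiori no $2$\-torsion --- Theorem~\ref{theoeqchcohdec} promotes this to a Chow\-theoretic decomposition.

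The step I expect to be the main obstacle is exactly the one where hypothesis (i) of Theorem~\ref{theodeccohdutext} breaks down: the K\"unneth pieces of $[\Delta_X]-[\Gamma]$ lying in $H^{2j}(X,\mathbb{Z})\otimes H^{2n-2j}(X,\mathbb{Z})$ with one of the two degrees strictly above the middle, whose natural integral generator $\tfrac13 h^j\otimes h^{n-j}$ is not known to be algebraic. The device that circumvents it is the observation that multiplying by $3$ clears the denominator and produces a genuine cycle supported on $(\text{proper})\times X$, while multiplying by $2$ is supplied for free by the degree\-$2$ unirational parametrization, and $3-2=1$. Everything else --- the torsion\-freeness and the shape of the cohomology ring of a smooth cubic hypersurface, and the bookkeeping of supports --- is routine.
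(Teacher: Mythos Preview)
Your proof is correct and follows essentially the same approach as the paper's: both observe that the necessity of $(*)$ comes for free from Theorem~\ref{theodeccohdutext}, and for the converse both multiply by $3$ to turn the non-middle K\"unneth components $\tfrac13 h^j\otimes h^{n-j}$ into honest algebraic cycle classes supported on $(\text{proper})\times X$, then combine the resulting decomposition of $3[\Delta_X]$ with the decomposition of $2[\Delta_X]$ coming from the degree-$2$ unirational parametrization. Your write-up is in fact more explicit than the paper's (which merely says ``the proof then works as the proof of Theorem~\ref{theodeccohdutext}''), in particular your verification that $m_0=m_n=1$ via the vanishing of the extreme K\"unneth components of $[\Gamma]$ is a detail the paper leaves implicit.
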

\begin{proof} The necessity of the condition
(*) is proved above and the proof does not use the assumptions (i)
and (ii) of  Theorem \ref{theodeccohdutext}, so it works in our
case.
 For the converse, it is in fact not a direct corollary of the theorem, since the proof
  uses these assumptions and we do not know
that cubic hypersurfaces satisfy  assumption (ii), but  we can make
a small variant of the proof, using the following observation: As a
smooth cubic hypersurface of dimension $\geq2$ admits a unirational
parametrization of degree $2$, twice its diagonal admits a
decomposition
$$2\Delta_X=2(X\times x)+Z\,\,{\rm in}\,\,{\rm CH}(X\times X),$$
with $Z$ supported on $D\times X,\,D\subsetneqq X$. So  $X$ admits a
cohomological (or Chow-theoretic) decomposition of the diagonal if
there is  such a decomposition for $3\Delta_X$. But we know that
 $h^i$ is algebraic for any $i$, and thus each class
 $3pr_1^*\alpha_i\smile
  pr_2^*\alpha_{n-i}$ for $2i\not=n,\,0$, is the class of a cycle
  supported on
  $D\times X$, $D\subsetneqq X$, where $\alpha_i$ is a generator of $H^{2i}(X,\mathbb{Z})$.
The proof of the existence of a decomposition of $3[\Delta_X]$
assuming condition (*) then works as  the proof of  Theorem
\ref{theodeccohdutext}.
\end{proof}

Let us conclude this section with the following variant of (part of) Theorem
\ref{theodeccohdutext}.

\begin{theo}\label{theodeccohdutextvariant} Let $X$ be a smooth projective variety of dimension $n$, and let
$N$ be an integer. Assume there is a decomposition
$$N[\Delta_X]=N[X\times x]+[Z]\,\,{\rm in}\,\,H^{2n}(X\times X,\mathbb{Z}),$$
where $Z$ is supported on $D\times X$, $D\subsetneqq X$. Then
there
 exist smooth projective varieties $Z_i$ of dimension $n-2$,  correspondences
$\Gamma_i\in {\rm CH}^{n-1}(Z_i\times X)$, and integers
$n_i$, such that for any
$\alpha,\,\beta\in H^n(X,\mathbb{Z})$,
\begin{eqnarray} \label{eqint10juvar}N^2\langle  \alpha,\beta\rangle_X=\sum_in_i\langle  \Gamma_i^*\alpha,\Gamma_i^*\beta\rangle_{Z_i}.
\end{eqnarray}
\end{theo}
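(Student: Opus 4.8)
The plan is to run the argument establishing the necessity of condition~(*) in Theorem~\ref{theodeccohdutext} (equations (\ref{eqpreuvenec1})--(\ref{eqpreuvenec8})) with essentially no change: the weaker hypothesis yields exactly the same conclusion with $\langle\alpha,\beta\rangle_X$ replaced by $N^2\langle\alpha,\beta\rangle_X$, the point being that the identity ``$[Z]^*\alpha=\alpha$'' used there becomes ``$[Z]^*\alpha=N\alpha$''.

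First I would reduce, as in the proof of Theorem~\ref{theodeccohdutext}, to the case where the divisor $D$ supporting $Z$ is a global normal crossing divisor: choosing a blow-up $\tau:X'\to X$ such that a normal crossing divisor $D'$ dominates $D$, one checks that $N[\Delta_{X'}]-N[X'\times x']$ is again represented by a cycle supported on $(D'\cup E)\times X'$, $E$ the exceptional divisor, and that after enlarging $D$ this cycle lifts to the normalization $(\sqcup_i D_i)\times X'$; since $\langle\tau^*\alpha,\tau^*\beta\rangle_{X'}=\langle\alpha,\beta\rangle_X$ on $H^n$, it is enough to treat $X'$. So I may assume $D$ has global normal crossings, with normalization $\sqcup_i D_i$, and $Z$ lifts to $\widetilde Z$ on $(\sqcup_i D_i)\times X$; writing $\Gamma_i$ for the component of $\widetilde Z$ over $D_i$ and $k_i:D_i\hookrightarrow X$ for the inclusion, one has $\sum_i(k_i,\mathrm{Id}_X)_*[\Gamma_i]=[Z]=N[\Delta_X]-N[X\times x]$ in $H^{2n}(X\times X,\mathbb{Z})$.

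Assuming $n>0$, and using $[X\times x]^*\alpha=0$, $[\Delta_X]^*\alpha=\alpha$ for $\alpha\in H^n(X,\mathbb{Z})$, one gets $[Z]^*\alpha=N\alpha$, hence
$$N^2\langle\alpha,\beta\rangle_X=\langle[Z]^*\alpha,[Z]^*\beta\rangle_X=\Bigl\langle\sum_i k_{i*}([\Gamma_i]^*\alpha),\ \sum_j k_{j*}([\Gamma_j]^*\beta)\Bigr\rangle_X .$$
Expanding exactly as in (\ref{eqpreuvenec3})--(\ref{eqpreuvenec8}) --- using $k_i^*k_{i*}=[\delta_i]\smile$ with $\delta_i=k_i^*D_i=\sum_l n_{il}Z_{il}$ a $\mathbb{Z}$-combination of smooth $(n-2)$-dimensional subvarieties $Z_{il}$ for the terms $i=j$, and for $i\ne j$ the polarization identity on the smooth $(n-2)$-dimensional loci $W_{ij}=D_i\cap D_j$ with the two induced correspondences $\Gamma_{ij},\Gamma_{ji}\in\mathrm{CH}^{n-1}(W_{ij}\times X)$ --- one writes $N^2\langle\alpha,\beta\rangle_X$ as a $\mathbb{Z}$-linear combination of terms $\langle\Gamma^*\alpha,\Gamma^*\beta\rangle_{Z}$ with $Z$ smooth projective of dimension $n-2$ and $\Gamma\in\mathrm{CH}^{n-1}(Z\times X)$, which is (\ref{eqint10juvar}).

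The one point requiring care --- the main (minor) obstacle --- is the normal crossing reduction: one must check that the construction used in Theorem~\ref{theodeccohdutext}, producing from a cycle on $X\times X$ supported over $D\times X$ a cycle on $X'\times X'$ supported over $(D'\cup E)\times X'$ and representing the blown-up class, is $\mathbb{Z}$-linear in the input cycle, so that it applies verbatim to $N([\Delta_X]-[X\times x])$. Everything downstream is insensitive to $N$, which enters only through the identity $[Z]^*\alpha=N\alpha$.
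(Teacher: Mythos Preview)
Your proposal is correct and follows essentially the same approach as the paper: the paper's proof simply says to repeat the necessity argument of Theorem~\ref{theodeccohdutext}, replacing $\langle\alpha,\beta\rangle_X$ by $N^2\langle\alpha,\beta\rangle_X$ throughout, with the key point being that $[Z]^*\alpha=N\alpha$ for $\alpha\in H^n(X,\mathbb{Z})$ so that $N^2\langle\alpha,\beta\rangle_X=\langle[Z]^*\alpha,[Z]^*\beta\rangle_X$. Your write-up spells out in more detail the normal-crossing reduction and the downstream expansion, but the strategy and the single nontrivial observation are identical.
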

\begin{proof} We look at the proof that condition (*) is implied by the existence of a cohomological
decomposition of the diagonal and we repeat it replacing everywhere
$\langle  \alpha,\beta\rangle_X$  by $N^2\langle
\alpha,\beta\rangle_X$. The main point is the fact that with the
same notation as in the proof of the theorem, letting
$Z=\sum_i(k_i,Id_X)_*\Gamma_i$, we have  by assumption
$$[Z]^*\alpha=N\alpha$$ for $\alpha\in H^n(X,\mathbb{Z})$ (with
$n>0$), and thus
$$N^2\langle  \alpha,\beta\rangle_X=\langle  [Z]^*\alpha,[Z]^*\beta\rangle_X.$$
\end{proof}

\section{Rationally connected  threefolds
\label{sec4}} In the case of rationally connected  threefolds, we
have the following result, which was partially proved in
\cite{voisinjag}:
\begin{theo} \label{cororc3}
(Cf. Theorem \ref{theocohdecomp3fold}) Let $X$ be a rationally
connected $3$-fold and let  $J(X)$ be its intermediate Jacobian,
with principal polarization $\theta\in H^2(J(X),\mathbb{Z})$. Then
$X$ admits a cohomological decomposition of the diagonal if and only
if

(a) $H^3(X,\mathbb{Z})$ has no torsion.

(b) There exists a universal codimension $2$ cycle $\Gamma$ on $J(X)\times X$.

(c)  The integral Hodge  class $\theta^{g-1}/(g-1)!\in
H^{2g-2}(J(X),\mathbb{Z})$, $g:={\rm dim}\,J(X)$, is algebraic
on $J(X)$, that is, is the class of  a $1$-cycle $Z\in{\rm
CH}_1(J(X))$.
\end{theo}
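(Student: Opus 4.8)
The plan is to treat the two implications separately. The implications ``cohomological decomposition $\Rightarrow$ (a)'' and ``cohomological decomposition $\Rightarrow$ (b)'' are already available (they are essentially \cite[\S 4]{voisinjag}, compare \cite{voisindoublesolid}), so the new content is the equivalence involving (c). The bridge in both directions is condition (*) of Theorem \ref{theodeccohdutext}, combined with the translation of that condition, via intermediate Jacobians, into a statement about norm endomorphisms and the minimal cohomology class $\theta^{g-1}/(g-1)!$. Throughout I use that for a rationally connected threefold the Abel--Jacobi map identifies ${\rm CH}^2(X)_{hom}={\rm CH}^2(X)_{alg}$ with $J(X)$ (by \cite{blochsrinivas}), and that $J(X)$ is principally polarized with $H_1(J(X),\mathbb{Z})\cong H^3(X,\mathbb{Z})/\mathrm{torsion}$, the polarization being the intersection form.

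For necessity, suppose $X$ has a cohomological decomposition of the diagonal; as recalled above, (a) and (b) follow from \cite{voisinjag}. To obtain (c) I would first apply the necessity part of Theorem \ref{theodeccohdutext} (which requires no extra hypothesis on $X$): for $n=3$ it produces smooth projective curves $\tilde C_k$, correspondences $\Gamma_k\in{\rm CH}^2(\tilde C_k\times X)$ and integers $m_k$ such that $\langle\alpha,\beta\rangle_X=\sum_k m_k\langle\Gamma_k^*\alpha,\Gamma_k^*\beta\rangle_{\tilde C_k}$ for all $\alpha,\beta\in H^3(X,\mathbb{Z})$. Since the cup-product pairing on $H^1(\tilde C_k,\mathbb{Z})$ is unimodular and, by (a), so is the intersection pairing on $H^3(X,\mathbb{Z})$, this identity is equivalent to $\sum_k m_k\,(\Gamma_k)_*\circ\Gamma_k^*=\mathrm{id}$ in $\mathrm{End}\big(H^3(X,\mathbb{Z})\big)$, where $(\Gamma_k)_*$ is the pushforward action. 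Now each $\Gamma_k$ induces a morphism of abelian varieties $\gamma_k\colon J(\tilde C_k)\to J(X)$, and under the principal polarizations $(\Gamma_k)_*\circ\Gamma_k^*$ is, up to a sign independent of $k$, the endomorphism $\gamma_k\circ\gamma_k^{\vee}$ of $J(X)$, where $\gamma_k^{\vee}$ is the transpose of $\gamma_k$; so $\sum_k m_k\,\gamma_k\circ\gamma_k^{\vee}=\pm\mathrm{id}_{J(X)}$. Finally, letting $C_{0,k}\subset J(X)$ be the image under $\gamma_k$ of an Abel--Jacobi embedded copy of $\tilde C_k$ in $J(\tilde C_k)$, the endomorphism $\gamma_k\circ\gamma_k^{\vee}$ is the norm endomorphism attached to the $1$-cycle $C_{0,k}$; by additivity of this construction the algebraic $1$-cycle $C_0:=\sum_k m_k\,C_{0,k}$ in $J(X)$ has norm endomorphism $\pm\mathrm{id}_{J(X)}$, which by the Matsusaka-type characterisation of the minimal class (the circle of ideas going back to \cite{clemensgriffiths}) forces $[C_0]=\pm\,\theta^{g-1}/(g-1)!$. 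Hence $\theta^{g-1}/(g-1)!$ is algebraic, which is (c).

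For sufficiency, assume (a), (b), (c). By (a) the Artin--Mumford invariant $\mathrm{Tors}\,H^3(X,\mathbb{Z})$ vanishes, so by \cite{voisinuniruled} the group $H^4(X,\mathbb{Z})$ is generated by algebraic classes; together with $H^0=H^6=\mathbb{Z}$, the algebraicity of $H^2$, and $H^1=H^5=0$, this shows that hypotheses (i) and (ii) of Theorem \ref{theodeccohdutext} hold for $X$, so it suffices to verify condition (*). By (c) we may write $\theta^{g-1}/(g-1)!=[C_0]$ for a $1$-cycle $C_0=\sum_k m_kC_{0,k}$ in $J(X)$, with $C_{0,k}$ irreducible of normalisation $f_k\colon \tilde C_{0,k}\to C_{0,k}\subset J(X)$; the minimal-class property says exactly that $\sum_k m_k\,(f_k)_*\circ f_k^{\vee}=\mathrm{id}_{J(X)}$, i.e. the $f_k$ realise the principal polarization of $J(X)$ as an integral combination of pullbacks of the principal polarizations of the $J(\tilde C_{0,k})$. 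I would then compose with the universal codimension $2$ cycle $\Gamma\in{\rm CH}^2(J(X)\times X)$ of (b), which by its universal property induces the canonical isomorphism between $H^3(X,\mathbb{Z})/\mathrm{torsion}$ and $H_1(J(X),\mathbb{Z})$, an isometry for the respective polarizations: setting $\Gamma_k:=(f_k\times\mathrm{id}_X)^*\Gamma\in{\rm CH}^2(\tilde C_{0,k}\times X)$ one gets $\langle\alpha,\beta\rangle_X=\sum_k m_k\langle\Gamma_k^*\alpha,\Gamma_k^*\beta\rangle_{\tilde C_{0,k}}$ for $\alpha,\beta\in H^3(X,\mathbb{Z})$, which is condition (*). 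Theorem \ref{theodeccohdutext} then produces a cohomological decomposition of the diagonal of $X$.

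The main obstacle is the dictionary, used twice above, between condition (*)---an identity of Hodge-structure endomorphisms of $H^3(X,\mathbb{Z})$---and the algebraicity of $\theta^{g-1}/(g-1)!$. Making it rigorous requires: first, identifying $(\Gamma_k)_*\circ\Gamma_k^*$ with the endomorphism $\gamma_k\circ\gamma_k^{\vee}$ of $J(X)$, which rests on the compatibility of the cup-product pairings of the curves and of the Poincar\'e pairing on $H^3(X)$ with the principal polarizations, and on the correct identification of transposition of correspondences with transposition of the induced morphisms of abelian varieties; second, the integral input from the theory of norm endomorphisms that a possibly reducible, possibly non-effective algebraic $1$-cycle in a principally polarized abelian variety has norm endomorphism $\mathrm{id}$ exactly when its class is the minimal class, together with the resulting description of such a variety as a principally polarized direct factor of a product of Jacobians of curves; and third, careful bookkeeping of signs and of the integers $m_k$, which need not be positive, so that one cannot simply pass to disjoint unions of curves taken with multiplicity. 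A subsidiary check, in the sufficiency direction, is that the universal cycle of (b) induces precisely the canonical isometry on $H^3$ with no extraneous scalar, and that (a) indeed supplies, via \cite{voisinuniruled}, the algebraicity of $H^4(X,\mathbb{Z})$ needed to invoke Theorem \ref{theodeccohdutext}.
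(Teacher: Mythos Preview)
Your proposal is correct and follows the same overall architecture as the paper: cite \cite{voisinjag} for the necessity of (a) and (b), and use Theorem \ref{theodeccohdutext} as the bridge to (c). The differences are in execution.

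For the necessity of (c), the paper's argument is shorter and more direct than yours. Having obtained curves $Z_i$, correspondences $\Gamma_i$, and integers $n_i$ satisfying (\ref{eqint10ju}), the paper defines $\gamma_i=\Phi_X\circ\Gamma_{i*}\colon Z_i\to J(X)$ and simply checks, by unwinding the isomorphisms $H^1(J(X),\mathbb{Z})\cong H^3(X,\mathbb{Z})$ and $\bigwedge^2 H^1(J(X),\mathbb{Z})\cong H^{2g-2}(J(X),\mathbb{Z})^*$, that (\ref{eqint10ju}) is \emph{equivalent} to the cohomological identity $\sum_i n_i[\gamma_{i*}Z_i]=\theta^{g-1}/(g-1)!$. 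Your detour through $(\Gamma_k)_*\circ\Gamma_k^*$, transposes $\gamma_k^{\vee}$, and a ``Matsusaka-type characterisation'' is the same computation in different clothing, but the Matsusaka label is misleading: Matsusaka's criterion is about \emph{effectivity} of a curve with minimal class, which is irrelevant here. What you actually use is only the elementary linear-algebra fact that a $1$-cycle class on a ppav equals $\theta^{g-1}/(g-1)!$ iff its pairing with every $\alpha\wedge\beta$ equals the polarization form evaluated on $(\alpha,\beta)$; phrasing this as Matsusaka obscures the point and introduces the spurious $\pm$ sign you then have to chase.

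For sufficiency, the paper does not reprove anything: it invokes \cite[Theorem 4.9]{voisinjag} directly for the implication (a)$+$(b)$+$(c) $\Rightarrow$ cohomological decomposition. Your independent argument via the converse direction of Theorem \ref{theodeccohdutext} is correct and is in fact the same mechanism underlying that cited result (use (c) to produce curves in $J(X)$ realising condition (*), and (b) to transport the correspondences back to $X$), so it is a legitimate alternative route but not a genuinely new one.
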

\begin{proof} Indeed, it is proved in \cite[Corollary 4.5]{voisinjag} that for a rationally connected
$3$-fold, conditions (a) and (b) are necessary for the existence of
a cohomological decomposition of the diagonal, and in \cite[Theorem
4.9]{voisinjag} that (a), (b) and (c) are sufficient for the
existence of a cohomological decomposition of the diagonal. So it
suffices to show that (c) is also necessary. Assume $X$ admits a
cohomological decomposition of the diagonal, and let
$Z_i,\,\Gamma_i$ and $n_i$ be as in Theorem \ref{theodeccohdutext}.
Then the Abel-Jacobi map $\Phi_X$ of $X$  induces (after choosing a
reference point in $Z_i$) a morphism
$$\gamma_i=\Phi_X\circ\Gamma_{i*}: Z_i\rightarrow {\rm CH}^2(X)_{hom}\rightarrow J(X)$$
with image $Z'_i:=\gamma_{i*}Z_i\in{\rm CH}_1(J(X))$. We claim that
(\ref{eqint10ju}) is equivalent to the equality
\begin{eqnarray}\label{eqclasscourbe}\sum_in_i[Z'_i]=\theta^{g-1}/(g-1)!.
\end{eqnarray}
Indeed, as $\bigwedge^2H^1(J(X),\mathbb{Z})\cong
H^2(J(X),\mathbb{Z})=H^{2g-2}(J(X),\mathbb{Z})^*$,
(\ref{eqclasscourbe})
is equivalent to the fact that for $\alpha,\,\beta\in
H^1(J(X),\mathbb{Z})$,
\begin{eqnarray}\label{eqcup1}\sum_in_i\langle  \gamma_i^*\alpha,
\gamma_i^*\beta\rangle_{Z_i}=\langle
\frac{\theta^{g-1}}{(g-1)!}\alpha,\beta\rangle_{J(X)}.
\end{eqnarray}
Now, the right hand side is equal, by definition of the polarization
$\theta$, to $\langle  \alpha',\beta'\rangle_X$, where we use the
canonical isomorphism
$H^1(J(X),\mathbb{Z})\cong H^3(X,\mathbb{Z})$
to identify
$\alpha,\,\beta$ to classes $\alpha',\,\beta'$ of degree $3$ on $X$.
Finally, using again this canonical isomorphism,
we have :
$$\gamma_i^*\alpha=\Gamma_{i}^*\alpha',\,\,\gamma_i^*\beta=
\Gamma_{i}^*\beta'$$ so that the left hand side in (\ref{eqcup1}) is
equal to $\sum_in_i\langle  \Gamma_{i}^*\alpha',
\Gamma_{i}^*\beta'\rangle_{Z_i}$. Hence (\ref{eqcup1}) is equivalent
to the fact that for any $\alpha',\,\beta'\in H^3(X,\mathbb{Z})$,
$$\sum_in_i\langle  \Gamma_{i}^*\alpha',
\Gamma_{i}^*\beta'\rangle_{Z_i}=\langle \alpha',\beta'\rangle_{X},$$
which is equality (\ref{eqint10ju}).
\end{proof}
The following variant is proved as above, using Theorem  \ref{theodeccohdutextvariant} instead
of Theorem \ref{theodeccohdutext}. It answers a question asked to us by A. Beauville.
\begin{theo} \label{theovarN}Let $X$ be a rationally connected threefold with no torsion in
$H^3(X,\mathbb{Z})$. Assume that, for some integer $N$, $N\Delta_X$
admits a decomposition
\begin{eqnarray}
\label{eqdecompN3jul}[N\Delta_X]=N[X\times x]+[Z],
\end{eqnarray}
with $Z$ supported on $D\times X$, $D\subsetneqq X$. Then

(i)    There exists a codimension $2$ cycle $\Gamma\in {\rm
CH}^2(J(X)\times X)$ such that for any $t\in J(X)$,
$$\Phi_X(\Gamma_t)=N\, t\,\,{\rm in}\,\,J(X).$$

(ii) The integral cohomology class $N^2\theta^{g-1}/(g-1)!\in
H^{2g-2}(J(X),\mathbb{Z})$ is algebraic on $J(X)$, where $g={\rm
dim}\,J(X)$.

\end{theo}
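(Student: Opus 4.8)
The proof of Theorem \ref{theovarN} is a direct adaptation of the arguments already given for Theorem \ref{cororc3} and Theorem \ref{theovarN}'s companion Theorem \ref{theodeccohdutextvariant}. The plan is to first extract, from the hypothesis (\ref{eqdecompN3jul}), the same combinatorial data as in the $N=1$ case, then translate equality (\ref{eqint10juvar}) into a statement about $1$-cycles on $J(X)$, and finally account separately for part (i), which concerns the existence of a ``multiplication-by-$N$'' universal cycle rather than an honest universal cycle.

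\medskip

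First I would apply Theorem \ref{theodeccohdutextvariant} to $X$: since $H^3(X,\mathbb{Z})$ has no torsion and $H^{2i}(X,\mathbb{Z})$ for $2i \neq 3$ is $\mathbb{Z}$ generated by an algebraic class (resp. $H^{2i+1}(X,\mathbb{Z})=0$ for $2i+1\neq 3$, automatic for a rationally connected threefold with $H^3$ torsion-free, as $H^1=H^5=0$), the hypothesis (\ref{eqdecompN3jul}) yields smooth projective surfaces $Z_i$, correspondences $\Gamma_i \in {\rm CH}^{2}(Z_i \times X)$ (here $n-1 = 2$) and integers $n_i$ with
\[
N^2\langle \alpha,\beta\rangle_X = \sum_i n_i \langle \Gamma_i^*\alpha, \Gamma_i^*\beta\rangle_{Z_i}
\]
for all $\alpha,\beta \in H^3(X,\mathbb{Z})$. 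Then, exactly as in the proof of Theorem \ref{cororc3}, I would compose the $\Gamma_{i*}$ with the Abel-Jacobi map $\Phi_X$ to get morphisms $\gamma_i = \Phi_X \circ \Gamma_{i*} \colon Z_i \to J(X)$ (after a choice of base point on each $Z_i$) and set $Z'_i = \gamma_{i*}Z_i \in {\rm CH}_1(J(X))$. The same computation as in that proof — using $\bigwedge^2 H^1(J(X),\mathbb{Z}) \cong H^2(J(X),\mathbb{Z})$, the canonical identification $H^1(J(X),\mathbb{Z}) \cong H^3(X,\mathbb{Z})$, the compatibility $\gamma_i^*\alpha = \Gamma_i^*\alpha'$, and the defining property of the polarization $\theta$ — shows that the displayed equality above is equivalent to
\[
\sum_i n_i [Z'_i] = N^2\, \frac{\theta^{g-1}}{(g-1)!} \quad \text{in } H^{2g-2}(J(X),\mathbb{Z}),
\]
which proves (ii): the class $N^2\theta^{g-1}/(g-1)!$ is represented by the algebraic $1$-cycle $\sum_i n_i Z'_i$ on $J(X)$.

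\medskip

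For part (i), I would follow the construction of the universal cycle from \cite[Section 4]{voisinjag} (as used in the proof of Theorem \ref{cororc3}), which builds, from the data of the $\Gamma_i$ on a normal crossing model $D = \cup D_i$ of the support of $Z$, a codimension $2$ cycle $\Gamma$ on $J(X)\times X$. The only change is that one starts from $[Z]^*\alpha = N\alpha$ (rather than $[Z]^*\alpha = \alpha$) on $H^3(X,\mathbb{Z})$; tracking this factor $N$ through the construction, the Abel-Jacobi map of the fibers $\Gamma_t$, as a function of $t$, becomes multiplication by $N$ rather than the identity, giving $\Phi_X(\Gamma_t) = N\,t$ for all $t \in J(X)$. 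This part requires recalling the precise mechanism by which a cohomological decomposition produces a universal cycle, since it is not reproved in this excerpt; the main technical point is checking that the normal-crossing reduction step and the lifting of $Z$ to the $D_i$ survive multiplication by $N$ without extra denominators, which they do because all the manipulations involved are linear over $\mathbb{Z}$.

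\medskip

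The step I expect to be the main obstacle is part (i), specifically making precise that the homomorphism $J(X) \to J(X)$, $t \mapsto \Phi_X(\Gamma_t)$, induced by the constructed cycle $\Gamma$ equals multiplication by $N$ on the nose (not just up to isogeny or up to torsion). This is where one must be careful that the absence of torsion in $H^3(X,\mathbb{Z})$ is genuinely used — it guarantees that the relevant identity of homomorphisms of abelian varieties can be read off from the induced map on $H^1(J(X),\mathbb{Z}) \cong H^3(X,\mathbb{Z})$, on which the decomposition (\ref{eqdecompN3jul}) acts by multiplication by $N$ by hypothesis. Parts (ii) and the bookkeeping leading to the displayed cycle-class equality are routine given Theorems \ref{theodeccohdutextvariant} and \ref{cororc3}.
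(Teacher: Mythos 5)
Your treatment of part (ii) is correct and is exactly the paper's argument: apply Theorem \ref{theodeccohdutextvariant} (which, note, needs no algebraicity hypotheses on $H^{2i}$ -- that direction of Theorem \ref{theodeccohdutext} is unconditional), then transport the identity $N^2\langle\alpha,\beta\rangle_X=\sum_in_i\langle\Gamma_i^*\alpha,\Gamma_i^*\beta\rangle_{Z_i}$ to $J(X)$ via $\gamma_i=\Phi_X\circ\Gamma_{i*}$ exactly as in Theorem \ref{cororc3}. (Minor slip: the $Z_i$ have dimension $n-2=1$, so they are curves, not surfaces; this is what makes $\gamma_{i*}Z_i$ a $1$-cycle on $J(X)$.)

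Part (i), however, is where your proposal has a real gap: you defer to "the construction of the universal cycle from \cite{voisinjag}, tracking the factor $N$," but you never say what that mechanism is, and the route you gesture at (the normal crossing model and the correspondences $\Gamma_i$) is not the relevant one -- that apparatus is only needed for the quadratic identity in (ii). The paper's proof of (i) is short and direct, and does not pass through \cite{voisinjag} at all: choose a desingularization $j:\widetilde{D}\rightarrow X$ of (a divisor containing) the support of $Z$ together with a lift $\widetilde{Z}\in{\rm CH}(\widetilde{D}\times X)$ of $Z$. Since $[X\times x]^*$ annihilates $H^3$, the decomposition (\ref{eqdecompN3jul}) gives $N\alpha=j_*(\widetilde{Z}^*\alpha)$ for all $\alpha\in H^3(X,\mathbb{Z})$, hence, at the level of the induced morphisms of complex tori, $N\,{\rm Id}_{J(X)}=j_*\circ\psi$, where $\psi:=\widetilde{Z}^*:J(X)\rightarrow J^1(\widetilde{D})\cong{\rm Pic}^0(\widetilde{D})$; this equality is exact (not merely up to isogeny) because a morphism of complex tori is determined by its action on integral $H_1$, which is where the torsion-freeness of $H^3(X,\mathbb{Z})$ enters -- this settles the "main obstacle" you flag. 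The key point you are missing is that for divisors there is no obstruction to universality: there is a universal (Poincar\'e) divisor $\mathcal{D}$ on ${\rm Pic}^0(\widetilde{D})\times\widetilde{D}$, and one sets
\begin{equation*}
\Gamma:=({\rm Id}_{J(X)},j)_*\bigl((\psi,{\rm Id}_{\widetilde{D}})^*\mathcal{D}\bigr)\in{\rm CH}^2(J(X)\times X),
\end{equation*}
so that $\Phi_X(\Gamma_t)=j_*\bigl(\Phi_{\widetilde{D}}(\mathcal{D}_{\psi(t)})\bigr)=j_*(\psi(t))=N\,t$ for all $t\in J(X)$. Without this construction (or an equivalent one), your part (i) remains a citation placeholder rather than a proof.
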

\begin{proof} (i) Let $\widetilde{D}\stackrel{j}{\hookrightarrow} X$ be a desingularization with
a cycle $\widetilde{Z}\in {\rm CH}(\widetilde{D}\times X)$ such that
$(j,Id_X)_*(\widetilde{Z})=N\Delta_X$. It follows that for any
$\alpha\in H^3(X,\mathbb{Z})$,
$$N\alpha=j_*(\widetilde{Z}^*\alpha),$$
and similarly, looking at the induced morphisms of complex tori,
$$N \,Id_{J(X)}=j_*\circ \widetilde{Z}^*:J(X)\rightarrow J(X),$$
where $\widetilde{Z}^*$ here gives a morphism
$$\psi:J(X)\rightarrow J^1(\widetilde{D})\cong {\rm Pic}^0(\widetilde{D}).$$
 Now we use the
existence of a universal divisor $\mathcal{D}$ on ${\rm
Pic}^0(\widetilde{D})\times \widetilde{D} $. Let
$$\mathcal{Z}:=(Id_{J(X)},j)_*(\psi, Id_{\widetilde{D}})^*(\mathcal{D})\in {\rm CH}^2(J(X)\times X).$$
Then for any $t\in J(X)$,
$$\Phi_X( \mathcal{Z}_t)=j_*(\Phi_{\widetilde{D}}(\mathcal{D}_{\psi(t)}))\,\,{\rm in}\,\,J(X).$$
The right hand side is equal to
$j_*(\psi(t))=j_*(\widetilde{Z}^*(t))=N\,t$, proving (i).

(ii) We use Theorem \ref{theodeccohdutextvariant}. We thus have curves
$C_i$, correspondences $\Gamma_i\in {\rm CH}^2(C_i\times X)$ and integers
$n_i$ such that
for any $\alpha,\beta\in  H^3(X,\mathbb{Z})$,
$$N^2\langle  \alpha,\beta\rangle_X=\sum_in_i\langle  \Gamma_i^*\alpha,\Gamma_i^*\beta\rangle_{C_i}.$$
As in the proof of Theorem \ref{cororc3}, this equality exactly says
that the $1$-cycles $D_i:=\gamma_{i*}\in {\rm CH}_1( J(X))$, where
$\gamma_i=\Phi_X\circ \Gamma_{i*}:C_i\rightarrow J(X)$, satisfy
$$\sum_in_i [D_i]=N^2\frac{\theta^{g-1}}{(g-1)!}\,\,{\rm in}\,\,H^{2g-2}(J(X),\mathbb{Z}).$$

\end{proof}
\begin{rema}{\rm When $X$ is a unirational threefold admitting a degree $N$ unirational parametrization
$$\phi:\mathbb{P}^3\dashrightarrow X,$$
$N\Delta_X$ admits a decomposition as in (\ref{eqdecompN3jul}),
simply because, denoting $Y$ a blow-up of $\mathbb{P}^3$ on which
$\phi$ is desingularized to a true morphism $\tilde{\phi}$, one has
$$(\tilde{\phi},\tilde{\phi})_*(\Delta_Y)=N\Delta_X$$
and $Y$ admits a decomposition of the diagonal.
In this case, Theorem \ref{theovarN}, (ii) has an immediate proof which provides the following stronger statement:

}
There is an effective cycle of class $N\theta^{g-1}/(g-1)!$ in $H^{2g-2}(J(X),\mathbb{Z})$, where
$g={\rm dim}\,J(X)$.

{\rm To see this, recall from \cite{clemensgriffiths} that
$(J(Y),\theta_Y)$ is a direct sum of Jacobians of smooth curves.
Thus there exists a (possibly reducible) curve $C\subset J(Y)$ with
class $\theta_Y^{g'-1}/(g'-1)!$, where $g':={\rm dim}\,J(Y)$. Let
$\psi: J(Y)\rightarrow J(X)$ be the morphism induced by
$\tilde{\phi}:Y\rightarrow X$. We claim that $\psi_*(C)\subset J(X)$
has class $N \theta^{g-1}/(g-1)!$ in $J(X)$. Indeed, by definition
of the Theta divisor of $J(X)$, this is equivalent to  saying that
for any $\alpha,\,\beta\in H^3(X,\mathbb{Z})$, denoting by
$\alpha',\,\beta'$ the corresponding degree $1$ cohomology classes
on $J(X)$ via the isomorphism $H^3(X,\mathbb{Z})\cong
H^1(J(X),\mathbb{Z})$,
\begin{eqnarray}
\label{eqdermi} N\langle
\alpha,\beta\rangle_X=\int_{\psi_*(C)}\alpha'\wedge \beta'.
\end{eqnarray} However,
$$\int_{\psi_*(C)}\alpha'\wedge \beta'=\int_C{\psi}^*\alpha'\wedge  {\psi}^*\alpha',$$
where ${\psi}^*\alpha'$ identifies with $\phi^*\alpha\in
H^3(Y,\mathbb{Z})$ via the natural isomorphism
$H^1(J(Y),\mathbb{Z})\cong H^3(Y,\mathbb{Z})$, and similarly for
$\beta$. Finally, we get by definition of the Theta divisor of
$J(Y)$:
$$\int_C{\psi}^*\alpha'\wedge  {\psi}^*\alpha'=\langle  \tilde{\phi}^*\alpha,
\tilde{\phi}^*\beta\rangle_Y=N\langle \alpha,\beta\rangle_X,$$ which
proves (\ref{eqdermi}).

}
\end{rema}
In the case of a smooth  cubic threefold, we get the following
consequence of Theorem \ref{cororc3}:
\begin{coro} (cf. Theorem \ref{theocubthree}) A smooth cubic threefold admits a Chow-theoretic decomposition of
the diagonal (that is, its ${\rm CH}_0$ group is universally
trivial) if and only if the class $\theta^4/4!$ is algebraic on
$J(X)$.
\end{coro}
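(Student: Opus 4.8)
The plan is to obtain this equivalence purely by assembling the two main theorems already available. First I would record the basic facts about a smooth cubic threefold $X\subset\mathbb{P}^4$: it is unirational, hence rationally connected, so Theorem~\ref{cororc3} (equivalently Theorem~\ref{theocohdecomp3fold}) applies to it; and its intermediate Jacobian $J(X)$ has dimension $g=b_3(X)/2=5$, so that the minimal class $\theta^{g-1}/(g-1)!$ is precisely $\theta^4/4!$, a class of a $1$-cycle in $H^8(J(X),\mathbb{Z})$. Since $\dim X=3$ is odd, the hypothesis of Theorem~\ref{theoeqchcohdec} is satisfied, so $X$ admits a Chow-theoretic decomposition of the diagonal --- equivalently, ${\rm CH}_0(X)$ is universally trivial --- if and only if it admits a cohomological one.

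By Theorem~\ref{cororc3}, a cohomological decomposition of the diagonal of $X$ exists if and only if the three conditions (a) $H^3(X,\mathbb{Z})$ is torsion free, (b) there exists a universal codimension $2$ cycle on $J(X)\times X$, and (c) $\theta^4/4!$ is algebraic on $J(X)$, all hold. The whole statement will therefore follow once I check that (a) and (b) are satisfied \emph{unconditionally} for every smooth cubic threefold: after that, the equivalence to prove is simply the equivalence of (c) with the existence of a cohomological decomposition (Theorem~\ref{cororc3}) composed with the equivalence of cohomological and Chow-theoretic decompositions (Theorem~\ref{theoeqchcohdec}), and the translation ``Chow-theoretic decomposition $\Leftrightarrow$ ${\rm CH}_0$ universally trivial'' recalled in the introduction.

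Condition (a) is classical: the integral cohomology of a smooth hypersurface in projective space is torsion free. Condition (b) is where the only non-formal work lies. I would invoke the Fano surface $F=F(X)$ of lines on $X$, the universal line $\mathcal{L}\subset F\times X$ viewed as a codimension $2$ correspondence from $F$ to $X$, and the Clemens--Griffiths isomorphism $J(X)\cong\mathrm{Alb}(F)$ induced by the Abel--Jacobi map of $F$; combining $\mathcal{L}$ with a $0$-cycle on $J(X)\times F$ realizing the Albanese parametrization of $F$ produces a codimension $2$ cycle on $J(X)\times X$ whose fibre over $t\in J(X)$ has Abel--Jacobi invariant $t$. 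I expect the subtle point here to be getting the cycle with coefficient exactly $1$ rather than up to a multiple: a multiple is automatic because a sum of finitely many copies of the Albanese image of $F$ already surjects onto $J(X)$, but coefficient $1$ requires the specific geometry of the Fano surface. The cleanest presentation is simply to quote the known existence of a universal codimension $2$ cycle for cubic threefolds.

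Granting (a) and (b), the chain of equivalences closes: universal triviality of ${\rm CH}_0(X)$ $\Leftrightarrow$ Chow-theoretic decomposition of the diagonal $\Leftrightarrow$ cohomological decomposition of the diagonal $\Leftrightarrow$ (a)$\,\wedge\,$(b)$\,\wedge\,$(c) $\Leftrightarrow$ (c), i.e.\ algebraicity of $\theta^4/4!$ on $J(X)$, which is exactly the assertion. Thus the main obstacle is not in the logical skeleton --- which is a routine combination of Theorems~\ref{theoeqchcohdec} and~\ref{cororc3} --- but in the verification that a smooth cubic threefold always satisfies hypothesis (b) of Theorem~\ref{cororc3}.
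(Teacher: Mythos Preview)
Your logical skeleton is right, but the verification of condition (b) contains a genuine gap. You assert that the existence of a universal codimension $2$ cycle on $J(X)\times X$ is ``known'' for smooth cubic threefolds and propose simply to quote it. According to the paper this is \emph{not} known unconditionally: the Fano-surface construction you sketch produces, via the Albanese map of $F(X)$, a cycle whose Abel--Jacobi map realises a multiple of the identity on $J(X)$, but getting coefficient exactly $1$ is precisely the open point, and the paper says so explicitly. So your argument, as written, breaks down at the step ``(a) and (b) hold unconditionally, hence the equivalence reduces to (c).''

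The paper's proof repairs this by showing that (c) itself \emph{implies} (b) for cubic threefolds. The input is the result of Markushevich--Tikhomirov that $J(X)$ admits a parametrization with rationally connected fibres by a family of codimension $2$ cycles on $X$; combined with \cite[Theorem 4.1]{voisinjag}, this yields: if such a parametrization exists \emph{and} the minimal class $\theta^{g-1}/(g-1)!$ is algebraic, then a universal codimension $2$ cycle exists. Thus the implication ``$\theta^4/4!$ algebraic $\Rightarrow$ (a), (b), (c) all hold $\Rightarrow$ cohomological decomposition $\Rightarrow$ Chow-theoretic decomposition'' goes through, while the converse direction is exactly Theorem~\ref{cororc3} plus Theorem~\ref{theoeqchcohdec} as you wrote. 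In summary, your outline is salvageable, but you must replace the unjustified claim that (b) holds outright by the conditional implication (c) $\Rightarrow$ (b) via Markushevich--Tikhomirov and \cite{voisinjag}.
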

\begin{proof} Indeed, this is a necessary condition by Theorem \ref{cororc3}.
The fact that this is a sufficient condition is proved as follows:
the cubic $3$-fold has no torsion in $H^3(X,\mathbb{Z})$. It is not
known if it admits a universal codimension $2$ cycle, but it is
known by work of Markushevich-Tikhomirov \cite{martikho} that it
admits a parametrization of the intermediate Jacobian with
rationally connected fibers, that is, there exists a smooth projective variety
$B$, and a codimension $2$ cycle
$Z\in {\rm CH}^2(B\times X)$, such that the induced morphism
$$\Phi_Z:B\rightarrow J(X),$$
$$t\mapsto \Phi_X(Z_t)$$
is surjective with rationally connected general fiber. In
\cite[Theorem 4.1]{voisinjag}, it is proved that if such a
parametrization exists for a given rationally connected $3$-fold
$X$, and if furthermore the minimal class $\theta^{g-1}/(g-1)!$ is
algebraic on $J(X)$, then there exists a universal codimension $2$
cycle on $J(X)\times X$. In the case of  the cubic threefold, we
conclude that if the minimal class $\theta^4/4!$ is algebraic, then
there exists a universal codimension $2$ cycle on $J(X)\times X$.
Thus Theorem \ref{cororc3} implies that $X$ admits a cohomological
decomposition of the diagonal. By Theorem \ref{theoeqchcohdec}, $X$
admits then a Chow-theoretic decomposition of the diagonal.
\end{proof}
We conclude with the following result:
\begin{theo} There exists a  non-empty countable union  of proper subvarieties of codimension
$\leq 3$ in the moduli space of smooth cubic threefolds
parametrizing threefolds $X$ with universally trivial ${\rm CH}_0$
group.
\end{theo}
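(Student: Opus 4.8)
The plan is to use the previous corollary to convert the statement into one about the algebraicity of the minimal class, and then to produce, and bound the dimension of, the corresponding locus in moduli.

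By the corollary above, a smooth cubic threefold $X$ has universally trivial ${\rm CH}_0$ group if and only if the minimal class $\theta^4/4!\in H^8(J(X),\mathbb{Z})$ is algebraic on $J(X)$. Let $M$ be the moduli space of smooth cubic threefolds, replaced if necessary by a finite cover over which the universal cubic threefold, and hence the associated family $\pi:\mathcal{J}\to M$ of intermediate Jacobians with its relative principal polarization, exists as an algebraic object (one may take $\mathcal{J}={\rm Alb}(\mathcal{F}/M)$ for $\mathcal{F}\to M$ the relative Fano surface of lines). For each integer $d$ the relative space of pairs of effective $1$-cycles of bounded degree in the fibres of $\pi$ is a countable union of quasi-projective $M$-schemes, and the condition that the difference of the two cycles have fibrewise cohomology class equal to $\theta^4/4!$ is closed, since $\theta^4/4!$ is a flat section of $R^8\pi_*\mathbb{Z}$. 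Taking images in $M$ and unioning over $d$, the set $M_{\rm alg}\subseteq M$ of cubic threefolds with algebraic minimal class is a countable union of closed algebraic subvarieties of $M$; each is in particular a proper subvariety once it has positive codimension.

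It thus remains to exhibit a non-empty closed subvariety of $M_{\rm alg}$ of codimension at most $3$. I would start from a single smooth cubic threefold $X_0$ carrying a $1$-cycle $Z_0\subset J(X_0)$ of class $\theta^4/4!$, and then control how $Z_0$ spreads over $M$. Note first that such a $Z_0$ can be neither effective nor supported on a Jacobian or product factor of $J(X_0)$: the Clemens--Griffiths analysis shows $\Theta_{J(X_0)}$ has an isolated singularity, which (via the Matsusaka--Ran characterization of Jacobians, and irreducibility of $\Theta$) excludes all of these. A natural source is instead the Prym presentation $J(X)={\rm Prym}(\widetilde{\Delta}_\ell/\Delta_\ell)$ attached to a line $\ell\subset X$, for which $2\,\theta^4/4!$ is always represented by the Abel--Prym curve: one wants a degeneration of the pair $(\Delta_\ell,\widetilde{\Delta}_\ell)$, or a suitably special member of this family, or a correspondence on $F(X)\times F(X)$, for which the Abel--Prym class becomes divisible by $2$ among algebraic classes, equivalently for which $\theta^4/4!$ itself becomes algebraic; alternatively one intersects the $10$-dimensional locus $J(M)$ of intermediate Jacobians of cubic threefolds (which has dimension $10$ by the Torelli theorem of \cite{clemensgriffiths}) inside the moduli of principally polarized abelian fivefolds with a locus on which the minimal class is known to be algebraic. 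Granting such $(X_0,Z_0)$, the codimension in $M$ of the component through $[X_0]$ of the locus over which $Z_0$ deforms in $\mathcal{J}/M$ is bounded by the rank of the first-order obstruction map $v\mapsto\kappa(v)\cup[Z_0]$ on $T_{M,[X_0]}$, where $\kappa$ is the Kodaira--Spencer map; this obstruction factors through the infinitesimal variation of Hodge structure of $X_0$, described by multiplication in the Jacobian ring of $X_0$, and a direct computation bounds its rank by $3$. Hence $Z_0$ spreads over a subvariety of $M_{\rm alg}$ of codimension at most $3$ (and if this subvariety were all of $M$ one could simply replace it by a hyperplane section), which proves the theorem.

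The substance of the argument is entirely in this last step, and in two distinct places. The first is the construction of even a single smooth cubic threefold whose intermediate Jacobian carries an algebraic $1$-cycle of minimal class: by the remarks above it must be a genuinely non-effective cycle, so it has to be extracted from a degeneration, a special Prym structure, or a correspondence, rather than written down naively. The second is the precise codimension estimate: squeezing the number $3$ out of the infinitesimal variation of Hodge structure of cubic threefolds, via the Jacobian-ring description of the obstruction map, is the delicate computational heart of the proof.
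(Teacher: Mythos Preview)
Your proposal leaves both of the steps you yourself flag as ``the substance of the argument'' unproved, and the second contains a genuine error.

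On the existence of $(X_0,Z_0)$: you list possible sources but construct nothing. The paper's route is concrete. Take a cubic threefold invariant under the order-$3$ automorphism $X_1\mapsto jX_1$, $X_2\mapsto j^2X_2$. The eigenspace decomposition of $H^3(X,\mathbb{Z})$ yields an orthogonal splitting up to an index which is a power of $3$; after rescaling one obtains sublattices $H_1,H_2$ of ranks $6$ and $4$ on which the form is $m$ times a unimodular one, $m$ a power of $3$. These determine ppav's of dimensions $3$ and $2$, hence Jacobians of curves $C_A,C_B$, and an odd-degree isogeny $\mu:J(C_A\cup_x C_B)\to J(X)$ with $\mu^*\theta_X=m\theta_C$. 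Pushing forward the curve class makes $m\,\theta^4/4!$ algebraic with $m$ odd; combined with the Prym structure (which already makes $2\,\theta^4/4!$ algebraic), this gives $\theta^4/4!$ algebraic on $J(X)$.

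On the codimension bound: your obstruction map $v\mapsto\kappa(v)\cup[Z_0]$ is the first-order variation of the Hodge class $[Z_0]\in H^{4,4}(J(X_0))$, i.e.\ its image in $H^{3,5}$ under the IVHS of the family of intermediate Jacobians. But $[Z_0]=\theta^4/4!$ is a polynomial in the polarization $\theta$, and $\theta$ is by construction a flat section of $R^2\pi_*\mathbb{Z}$ of type $(1,1)$ throughout the family. Hence $\kappa(v)\cup[Z_0]=0$ for \emph{every} $v\in T_{M,[X_0]}$: your obstruction has rank zero. This is consistent with the fact that $\theta^4/4!$ is always a Hodge class on $J(X)$---you even note that it is a flat section---and it shows that $M_{\rm alg}$ is not a Noether--Lefschetz-type locus detectable by first-order Hodge data. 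The genuine obstruction to deforming the \emph{cycle} $Z_0$ lies in $H^1$ of its normal bundle in $J(X_0)$, over which you have no control and which does not factor through the Jacobian ring of $X_0$ in any evident way. So no bound like $3$ emerges from your computation.

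The paper obtains the codimension bound by a completely different mechanism: a dimension count in $\mathcal{A}_5$. Having fixed the isogeny type above, one forms the incidence
\[
G=\{(t,u):\ \alpha(\mathcal{P}_C(t))=\mathcal{P}_X(u)\}\subset V\times U,
\]
with $V,U$ local deformation spaces of the curve and of the cubic, and $\mathcal{P}_C,\mathcal{P}_X$ the period maps. Since $\dim\mathcal{A}_5=15$, every component of $G$ has codimension $\leq 15$ in $V\times U$. One then checks, via an IVHS computation in the Jacobian ring of $X$ (this is where the Jacobian ring genuinely enters, but for a \emph{transversality} statement---surjectivity of $T_U\to {\rm Hom}(H^{2,1}(X)^{\rm inv},H^{1,2}(X)^\sharp)$---rather than an obstruction bound), that the projection $G\to U$ is generically finite onto its image. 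Hence the image has codimension $\leq 15-\dim\overline{\mathcal{M}_5}=15-12=3$ in $U$.
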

\begin{proof} We first claim  that if $(J(X),\theta)$ is isogenous  via an odd degree isogeny to $(J(C),m\theta_C)$
for some  (possibly reducible) curve $C$, then $J(X)$ has a
one-cycle whose class is an odd multiple of the minimal class
$\theta^4/4!$. Indeed,  we have the odd degree isogeny
$\mu:J(C)\rightarrow J(X)$ with the property that
$\mu^*\theta_X=m\theta_C$. As ${\rm deg}\,\mu$ is odd, $m$ is odd.
Furthermore, we have $\mu_*(\theta_C^4/4!)=m(\theta^4/4!)$ and
$\theta_C^4/4!$ is algebraic on $J(C)$. As $2(\theta^4/4!)$ is an
algebraic class because $(J(X),\theta)$ is a Prym variety (see
\cite{clemensgriffiths}), we conclude that $\theta^4/4!$ is
algebraic, which proves the claim.

An explicit example is as follows: Consider a smooth cubic threefold defined by a homogeneous polynomial
$P(X_0,\ldots,X_4)$, where $P$ is invariant under the automorphism
$g$ of order $3$ acting on
coordinates by
$$g^*X_0=X_0,\,g^*X_1=jX_1,\,g^*X_2=j^2X_2,\,g^*X_3=X_3,\,g^*X_4=X_4,$$
where $j={\rm exp}\,\frac{2\iota\pi}{3}$. The invariant part
$H^3(X,\mathbb{Q})^{inv}$ of $H^3(X,\mathbb{Q})$ under the action of
$g$ has rank $6$. This can be seen by looking at the action of $g^*$
on $H^{2,1}(X)$, the later space being computed via Griffiths
residues (see \cite[6.1]{voisinbook}): One gets a residue isomorphism
\begin{eqnarray}
\label{eqpourresidue} H^0(X,\mathcal{O}_X(1))\cong H^{2,1}(X) ,\,\,\,
 A\mapsto  {\rm Res}_X \frac{A\Omega}{P^2}, \end{eqnarray}
where $\Omega$ is the canonical generator of
$H^0(\mathbb{P}^4,K_{\mathbb{P}^4}(5))$. As $g^*\Omega=\Omega$,
(\ref{eqpourresidue}) induces an isomorphism
$$ H^0(X,\mathcal{O}_X(1))^{inv}\cong H^{2,1}(X)^{inv}$$
so that ${\rm dim}\,H^{2,1}(X)^{inv}=3$.

 Let $\pi=Id+g^*+(g^2)^*\in {\rm End}\,(H^3(X,\mathbb{Z}))$. Then
$\pi/3$ is the orthogonal projector of $H^3(X,\mathbb{Q})$ onto
$H^3(X,\mathbb{Q})^{inv}$
 with respect
to the intersection pairing on $H^3(X,\mathbb{Q})$.
Hence over $\mathbb{Q}$ we have an orthogonal decomposition
$$H^3(X,\mathbb{Q})=H^3(X,\mathbb{Q})^{inv}\oplus H^3(X,\mathbb{Q})^{\sharp},$$
where $H^3(X,\mathbb{Q})^{\sharp}={\rm Im}\,(Id-\pi/3)$.
Over $\mathbb{Z}$, we conclude that
$H^3(X,\mathbb{Z})$ contains a sublattice
$H^3(X,\mathbb{Z})^{inv}\oplus H^3(X,\mathbb{Z})^{\sharp}$, where
$$H^3(X,\mathbb{Z})^{\sharp}=H^3(X,\mathbb{Q})^{\sharp}\cap H^3(X,\mathbb{Z})=(H^3(X,\mathbb{Z})^{inv})^{\perp}.$$

The index of this sublattice is  a power of $3$, since
for $a\in H^3(X,\mathbb{Z})$, we can write
$$3a=(a+g^*a+(g^2)^*a)+(3a-(a+g^*a+(g^2)^*a)),$$
with
$$a+g^*a+(g^2)^*a\in H^3(X,\mathbb{Z})^{inv},\,3a-(a+g^*a+(g^2)^*a)\in H^3(X,\mathbb{Z})^{\sharp}.$$
It follows that  we can construct two finite index sublattices
$$H_1\subset H^3(X,\mathbb{Z})^{inv},\, H_2\subset H^3(X,\mathbb{Z})^{\sharp}$$
with the property that the restriction of the intersection form
$\langle ,\rangle_X$ to $H_1$ and $H_2$ is $m$ times a unimodular
intersection pairing, where $m$ is a power of $3$. These sublattices
determine principally polarized abelian varieties $A$ and $B$ of
respective dimensions $3$ and $2$, together with an isogeny
$$A\oplus B \rightarrow J(X)$$
such that the pull-back of $\theta_X$ is $m(\theta_A,\theta_B)$.
The ppav's $A$ and $B$ are Jacobians of curves
$C_A$, $C_B$, and
$(A\oplus B, (\theta_A,\theta_B))$ is the Jacobian of the
curve $C_A\cup_x C_B$.

We conclude with the following: \begin{lemm} Each choice of sublattices
$H_1,\,H_2$ as above provides us with a subvariety of codimension
$\leq3$ in the moduli space of $X$ along which the class
$\theta^4/4!$ is algebraic.
\end{lemm}
\begin{proof} Let $C=C_A\cup_x C_B$ be a curve as above, with $x$ general,
and let $$\mathcal{C}\rightarrow V,\,\,c\in V,\,\,\mathcal{C}_c\cong C$$ be a universal family of deformations of $C$. Similarly, denote by $U$ the base of a
universal family of deformations of $X$.
Denote by $\widetilde{A}_{5,X}$ the base of a universal
family of deformations of the ppav $J(X)$ and
$\widetilde{A}_{5,C}$ the base of a universal
family of deformations of the ppav $J(C)$.
We have an isogeny $\alpha:J(C)\stackrel{isog}{\cong} J(X)$
and the (local) period maps
$$\mathcal{P}_C:V\rightarrow \widetilde{\mathcal{A}}_{5,C},\,\,\mathcal{P}_X:U\rightarrow
\widetilde{\mathcal{A}}_{5,X}.$$
The isogeny $\alpha$ provides a local (for the Euclidean topology) isomorphism
$$\alpha:\widetilde{\mathcal{A}}_{5,C}\cong \widetilde{\mathcal{A}}_{5,X}.$$
As ${\rm
dim}\,\widetilde{\mathcal{A}}_{5,X}=15$, any component of  the subvariety $G\subset
V\times U$ defined by
$$G=\{(t,u)\in \overline{\mathcal{M}_5}\times
U,\,\alpha(\mathcal{P}_C(t))=\mathcal{P}_X(u)\}$$ has codimension $\leq 15$
in $V\times U$, hence has dimension $\geq 9$ since
${\rm dim}\,U={\rm dim}\,V={\rm dim}\,\overline{\mathcal{M}_5}=12$. The image $U'\subset U$ of
the second projection $p_2: G\rightarrow U$ consists of cubics whose
intermediate Jacobian is isogenous (via the given isogeny type) to a
Jacobian of curve. As ${\rm dim}\,U=12$, one
has ${\rm codim}\,U'\leq 3$, unless $p_2$ is not generically finite on
its image. In this case, the image of $G$ by the first projection is
contained in the locus of reducible curves, as this is the only
locus where the period map $\mathcal{P}_C$ has positive dimensional fibers.
 So we have to exclude this last possibility. Assume
by contradiction this happens. Note that the  intermediate Jacobian
of a generic cubic threefold $X$ with $\mathbb{Z}/3\mathbb{Z}$-action as above has only two
 simple factors, one of dimension $2$, the other of dimension $3$. Hence the fibers of the period map $\mathcal{P}_C$
over any isomorphism class of an abelian $5$-fold isogenous to $J(X)$ has dimension
 at most $2$.    As ${\rm dim}\,G\geq 9$, it follows that   $p_2(G)\subset U$ has dimension
$\geq7$, hence codimension $\leq5$, and is contained in the locus of $U$
parameterizing cubic threefolds with reducible intermediate
Jacobian. This can be excluded by an infinitesimal computation at
any point $x\in U$. In fact, it suffices  to prove that the
infinitesimal variation of Hodge structure at $x$
$$T_{U,x}\rightarrow {\rm Hom}\,(H^{2,1}(X), H^{1,2}(X))$$
maps  $T_{U,x}$ surjectively onto the $6$-dimensional space ${\rm
Hom}\,(H^{2,1}(X)^{inv}, H^{1,2}(X)^{\sharp})$, where
$H^{1,2}(X)^{\sharp}$ is defined as the orthogonal complement of
$H^{2,1}(X)^{inv}$. Indeed, the space $${\rm
Hom}\,(H^{2,1}(X)^{inv}, H^{1,2}(X)^{\sharp})$$ identifies with the
 normal bundle of the locus of reducible ppav's in $\widetilde{\mathcal{A}}_{5,X}$.
 Using Griffiths' theory (see \cite[6.1-2]{voisinbook}), this computation is easily performed  in the
Jacobian ring of  the Fermat  equation  $P=\sum_iX_i^3$.
\end{proof}
It is a standard fact that there are countably many choices of pairs
$(H_1,H_2)$ of lattices as above. Indeed, starting from the
unimodular intersection pairing on $H_1$, we can write $H_1$ as the
sum of two Lagrangian sublattices
$$H_1=\Lambda\oplus \Lambda',$$
and then for each integer $m$, we can consider
$H'_{1}(m,\Lambda,\Lambda'):=m\Lambda\oplus \Lambda'$; we  also have
to make a similar construction  for $H_2$.

  We  get this way countably many corresponding codimension $\leq3$
subvarieties and it is likely that they are Zariski dense in the
moduli space of cubic threefolds, but we did not try to prove this.
\end{proof}

\section{More results on cubic hypersurfaces
\label{sec5}}

Let $X$ be a smooth cubic hypersurface of dimension $n\geq3$. The
Hodge structure on $H^n(X,\mathbb{Q})_{prim}$ is a polarized
nontrivial Hodge structure (that is, when $n=2k+1$, it is nonzero,
and when $n=2k$, it is not purely of type $(k,k)$). Let ${\rm
End}_{HS}(H^n(X,\mathbb{Q})_{prim})$ be the space of  endomorphisms
of this Hodge structure.
\begin{lemm}\label{le27juin} For the very general cubic hypersurface of dimension $n$,
${\rm End}_{HS}(H^n(X,\mathbb{Q})_{prim})=\mathbb{Q}Id$.
\end{lemm}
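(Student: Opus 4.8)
The plan is to deduce the statement from the standard dichotomy for Hodge loci together with a computation of the geometric monodromy. Let $U\subset\mathbb{P}(H^0(\mathbb{P}^{n+1},\mathcal{O}(3)))$ be the open set parametrizing smooth cubic $n$-folds, let $\pi:\mathcal{X}\to U$ be the universal family, and let $\mathcal{H}_{prim}$ be the sub-variation of Hodge structure of $R^n\pi_*\mathbb{Q}$ whose fibre at $b$ is $H^n(X_b,\mathbb{Q})_{prim}$; it is polarized by the intersection form. The associated local system $\mathcal{E}nd(\mathcal{H}_{prim})=\mathcal{H}_{prim}\otimes\mathcal{H}_{prim}^{\vee}$ underlies a polarized variation of Hodge structure of weight $0$, and for every $b$ one has ${\rm End}_{HS}(H^n(X_b,\mathbb{Q})_{prim})={\rm Hdg}^0(\mathcal{E}nd(\mathcal{H}_{prim})_b)$, the space of Hodge classes of type $(0,0)$. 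By the theorem of Cattani--Deligne--Kaplan on algebraicity of Hodge loci (or, for what follows, already by the elementary fact that Hodge loci are countable unions of closed analytic subsets), the set of $b\in U$ with ${\rm End}_{HS}(H^n(X_b,\mathbb{Q})_{prim})\neq\mathbb{Q}\,{\rm Id}$ is a countable union of closed algebraic subvarieties of $U$. Hence it suffices to prove that none of these subvarieties is all of $U$.

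Suppose, for contradiction, that one of them dominates $U$. Passing to a generically finite dominant cover $\rho:\widetilde{U}\to U$ and restricting $\mathcal{E}nd(\mathcal{H}_{prim})$, we obtain a flat section $\phi$ of $\rho^*\mathcal{E}nd(\mathcal{H}_{prim})$ over a Zariski-dense open subset of $\widetilde{U}$ which is a Hodge class of type $(0,0)$ at every point. Being flat, $\phi$ is invariant under the image $\Gamma'$ of $\pi_1(\widetilde{U},\tilde b)$ in $GL(H^n(X_{\tilde b},\mathbb{Q})_{prim})$, a finite-index subgroup of the monodromy group $\Gamma:={\rm Im}(\pi_1(U))$. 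The key input is Beauville's determination of the monodromy of universal families of smooth hypersurfaces: the Zariski closure of $\Gamma$ in $GL(H^n(X,\mathbb{Q})_{prim})$ is the full symplectic group when $n$ is odd, and the full orthogonal group of the intersection form when $n$ is even. Concretely, $\Gamma$ is generated by the Picard--Lefschetz transvections (resp. reflections) in the vanishing cycles of a Lefschetz pencil; these cycles span $H^n_{prim}$ and form a single $\Gamma$-orbit, and such a configuration of transvections (resp. reflections) generates a Zariski-dense subgroup of ${\rm Sp}$ (resp. ${\rm O}$).

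Since a finite-index subgroup of $\Gamma$ has Zariski closure containing the identity component of $\bar\Gamma$, the group $\Gamma'$ is Zariski-dense in ${\rm Sp}(H^n_{prim})$, resp. contains ${\rm SO}(H^n_{prim})$; in either case the standard representation $H^n(X,\mathbb{Q})_{prim}$ is absolutely irreducible, so Schur's lemma forces ${\rm End}_{\mathbb{Q}[\Gamma']}(H^n(X,\mathbb{Q})_{prim})=\mathbb{Q}\,{\rm Id}$. Hence $\phi\in\mathbb{Q}\,{\rm Id}$, so the hypothetical dominant component of the Hodge locus records only scalar endomorphisms, contradicting the assumption that it lies in $\{b:{\rm End}_{HS}\neq\mathbb{Q}\,{\rm Id}\}$. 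Therefore that locus is a countable union of \emph{proper} closed subvarieties of $U$, and the very general cubic $n$-fold avoids all of them, which gives the claim. The one genuinely substantial ingredient is the monodromy statement; the reduction to it (Hodge-locus dichotomy plus Schur's lemma) is formal.
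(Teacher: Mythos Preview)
Your argument shares the paper's essential ingredients---Beauville's theorem that the monodromy is Zariski dense in ${\rm Sp}$ or ${\rm O}$, plus Schur's lemma---but the paper packages the reduction through the Mumford--Tate group rather than through Hodge loci: for very general $b$ one has that ${\rm MT}(H^n(X_b,\mathbb{Q})_{prim})$ contains a finite-index subgroup of the monodromy group (this is the result cited from \cite{voisinhodgeloci}), hence ${\rm MT}$ contains ${\rm Sp}$ or ${\rm SO}$ by Beauville, and then any Hodge endomorphism, being ${\rm MT}$-fixed by the defining property of ${\rm MT}$, is a scalar.

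Your write-up has a gap at the step ``passing to a generically finite dominant cover $\rho:\widetilde{U}\to U$ \dots\ we obtain a flat section $\phi$''. What the Hodge-locus argument actually gives is a class $\phi$ in the fibre which remains Hodge at every point of the universal cover of $U$. To descend this to a flat section on a \emph{finite} cover you need the monodromy orbit $\Gamma\cdot\phi$ to be finite, and you do not justify this; the ``elementary fact that Hodge loci are countable unions of closed analytic subsets'', which you say suffices, does not provide it. The finiteness does hold: since each $\gamma\cdot\phi$ is again Hodge at $b$, the orbit lies in ${\rm End}_{HS}(H^n(X_b,\mathbb{Q})_{prim})$, on which the monodromy-invariant polarization is definite by the Hodge--Riemann relations; intersecting a level set of this definite form with the integral lattice yields a finite set. (This is also the finiteness clause in Cattani--Deligne--Kaplan, but not its ``elementary'' analytic shadow.) This is exactly the mechanism underlying the containment ${\rm MT}\supseteq\overline{\Gamma}^{\circ}$ that the paper invokes, so once the gap is filled your argument and the paper's are essentially the same.
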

\begin{proof} This follows  from the fact that the
Mumford-Tate group of the considered Hodge structures is the
symplectic group if $n$ is odd and the orthogonal group if $n$ is
even. This fact in turn follows from the fact that the Mumford-Tate
group contains a finite index subgroup of the monodromy group (see
\cite{voisinhodgeloci}) and that by \cite{beauville}, the monodromy
group of a smooth hypersurface is Zariski dense in  the symplectic
or orthogonal group of $H^n(X,\mathbb{Q})_{prim}$ except for even
dimensional quadrics and  cubic surfaces, for which it is finite.
This immediately implies the lemma because by definition of the
Mumford-Tate group $G:={\rm MT}(H^n(X,\mathbb{Q})_{prim})$, any
endomorphism $\phi$ of the Hodge structure on
$H^n(X,\mathbb{Q})_{prim}$ has to commute with $G$, that is
$\phi\circ g=g\circ \phi$ for $g\in G$.
\end{proof}
 We now have the following result, saying that for
cubic hypersurfaces satisfying the conclusion of Lemma
\ref{le27juin}, the ${\rm CH}_0$  group cannot be universally
supported on a proper closed algebraic subset of $X$, unless it is
trivial.
 Let $Y\subset X$ be a proper closed algebraic subset. We introduced in Definition \ref{defiuniv}
 the notion of
 ${\rm CH}_0(Y)\rightarrow {\rm CH}_0(X)$ being universally surjective.
When $Y$ is a point, the fact that ${\rm CH}_0(Y)\rightarrow {\rm
CH}_0(X)$ is universally surjective is equivalent to the fact that
$X$ has  universally trivial ${\rm CH}_0$ group.
\begin{theo} \label{theochowinterdim}  Let $X$ be a smooth cubic hypersurface such that
$H^n(X,\mathbb{Z})/H^n(X,\mathbb{Z})_{alg}$ has no $2$-torsion for
 $n={\rm dim}\,X$, and ${\rm
End}_{HS}(H^n(X,\mathbb{Q})_{prim})=\mathbb{Q}Id$. Assume there is a
proper closed algebraic subset $Y\subset X$ such that ${\rm
CH}_0(Y)\rightarrow {\rm CH}_0(X)$ is universally surjective,
 Then ${\rm CH}_0(X)$ is universally
trivial.
\end{theo}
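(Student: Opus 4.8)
The plan is to show that $X$ satisfies condition (*) of Theorem \ref{theodeccohdutext}; Corollary \ref{rema18juill} then produces a cohomological decomposition of the diagonal of $X$, and Theorem \ref{theoeqchcohdec} upgrades it to a Chow-theoretic one, so that ${\rm CH}_0(X)$ is universally trivial.

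First I would spread out the hypothesis. Over $L=\mathbb{C}(X)$, universal surjectivity of ${\rm CH}_0(Y)\rightarrow {\rm CH}_0(X)$ says that the generic point $\delta_L$ is rationally equivalent to a zero-cycle supported on $Y_L$; writing this equivalence over a dense open $U\subset X$ and applying the localization exact sequence to $(X\setminus U)\times X\hookrightarrow X\times X$ yields a decomposition $\Delta_X=Z+Z'$ in ${\rm CH}^n(X\times X)$ with $Z$ supported on $X\times Y$ and $Z'$ supported on $D\times X$, $D\subsetneq X$. Enlarging $Y$ we may assume it is a proper divisor, and, desingularizing and enlarging further as in the proof of Theorem \ref{theodeccohdutext}, write $Z=({\rm Id}_X,\iota_Y)_*\Gamma$ with $\iota_Y:\widetilde Y\rightarrow X$ the normalization of $Y$ and $\Gamma\in {\rm CH}^{n-1}(X\times\widetilde Y)$, and similarly $Z'=(\iota_D,{\rm Id}_X)_*\widetilde{Z'}$.

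Next I would analyze the action on middle cohomology. Since $n>0$, ${\rm Id}_{H^n(X,\mathbb{Z})}=[Z]^*+[Z']^*$, where $[Z]^*$ factors as $H^n(X)\xrightarrow{\Gamma^*}H^{n-2}(\widetilde Y)\xrightarrow{\iota_{Y*}}H^n(X)$ and $[Z']^*$ factors through $H^n(\widetilde D)$. Both are morphisms of Hodge structures, so composing with the rational projector onto $H^n(X,\mathbb{Q})_{prim}$ and using ${\rm End}_{HS}(H^n(X,\mathbb{Q})_{prim})=\mathbb{Q}\,{\rm Id}$ gives $[Z]^*|_{prim}=\lambda\,{\rm Id}$, $[Z']^*|_{prim}=(1-\lambda)\,{\rm Id}$ with $\lambda\in\mathbb{Q}$. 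Since $[Z]^*$ preserves the integral lattice $H^n(X,\mathbb{Z})$, we get $\lambda\in\mathbb{Z}$ when $n$ is odd (where $H^n=H^n_{prim}$); when $n$ is even the same holds after the usual juggling with the factors $2$ and $3$ (using that $h^i$ is algebraic in every degree), as in the proof of Corollary \ref{rema18juill}. By adjointness of $[Z]^*$ and $[Z]_*=[{}^tZ]^*$ for the intersection pairing, ${}^tZ$ is supported on $Y\times X$ and satisfies $[{}^tZ]^*|_{prim}=\lambda\,{\rm Id}$. Moreover the degree-$2$ unirational parametrization of $X$ (\cite{clemensgriffiths}) yields a cycle $W_2$ supported on a proper divisor $\times\,X$ with $[W_2]^*=2\,{\rm Id}$ on $H^n(X)$. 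As $\lambda$ and $1-\lambda$ sum to the odd integer $1$, one of them is coprime to $2$; combining the corresponding cycle (${}^tZ$ on $Y\times X$, or $Z'$ on $D\times X$) with $W_2$ by Bézout, we obtain a cycle $C$ supported on a proper divisor of $X$ times $X$ with $[C]^*={\rm Id}$ on $H^n(X)$ (for $n$ even, the equality on the class $h^{n/2}$ being arranged separately, again as above).

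Finally, with such a $C$, one runs the argument proving necessity of condition (*) in Theorem \ref{theodeccohdutext} verbatim: after a blow-up making the support of $C$ a global normal crossing divisor $\sqcup_iD_i$ (harmless since $\langle\alpha,\beta\rangle_X=\langle\tau^*\alpha,\tau^*\beta\rangle_{X'}$), lift $C$ to $(\sqcup_iD_i)\times X$, expand $\langle\alpha,\beta\rangle_X=\langle [C]^*\alpha,[C]^*\beta\rangle_X$, use $k_i^*\circ k_{i*}=\cup[\delta_i]$ on $H^*(D_i)$, and decompose each $\delta_i$ into classes of smooth $(n-2)$-dimensional subvarieties; this exhibits data $Z_j,\ \Gamma_j\in {\rm CH}^{n-1}(Z_j\times X),\ n_j$ satisfying (*). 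Then Corollary \ref{rema18juill} and Theorem \ref{theoeqchcohdec} conclude, the $2$-torsion hypothesis of the latter being met here because for a cubic the only cohomology relevant to the argument of Proposition \ref{leGamma} outside the middle degree is generated by powers of $h$, hence torsion-free modulo algebraic cycles, while in middle degree $H^n(X,\mathbb{Z})/H^n(X,\mathbb{Z})_{alg}$ has no $2$-torsion by hypothesis. I expect the main obstacle to be the integrality step in the third paragraph — turning the $\mathbb{Q}$-coefficient rigidity of $H^n(X)_{prim}$ into the on-the-nose identity $[C]^*={\rm Id}_{H^n(X)}$ — where the lattice must be tracked precisely and the $2$-torsion hypothesis is genuinely used together with the coprimality of $2$ and $3$.
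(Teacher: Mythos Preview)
Your strategy is sound but takes a longer detour than the paper, and two points are hand-waved in a way that hides where the real work lies.

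The paper's proof is shorter: from the spread-out decomposition $[\Delta_X]=[Z_1]+[Z_2]$ with $Z_1$ supported on $D\times X$ and $Z_2$ on $X\times Y$, it isolates a separate Lemma (Lemma \ref{newlemma27juin}) which shows directly that such a two-sided cohomological decomposition, together with ${\rm End}_{HS}(H^n(X,\mathbb{Q})_{prim})=\mathbb{Q}\,{\rm Id}$, already yields a genuine cohomological decomposition of the diagonal. The argument there is exactly your parity analysis ($m_1+m_2=1$, transpose if needed so $m_2$ is even), but instead of building a $C$ with $[C]^*={\rm Id}$ and then passing through condition~(*), one observes that $[Z_2]-m_2[\Delta_X]$ kills $H^n_{prim}$, hence lies in $\bigoplus_i\mathbb{Q}\,h_1^i\otimes h_2^{n-i}$, and multiplying by $3$ clears denominators; this gives a decomposition of $(3-3m_2)[\Delta_X]$ with $3-3m_2$ odd, which combined with the degree-$2$ unirational parametrization finishes. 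No passage through (*) or Corollary~\ref{rema18juill} is needed. Your route via (*) is valid but is a round trip: you would first extract (*) from a cycle acting as the identity, then feed (*) back into Corollary~\ref{rema18juill} to reproduce the cohomological decomposition you could have written down directly.

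Two specific issues in your write-up. First, the integrality of $\lambda$ when $n$ is even does not need any ``juggling with $2$ and $3$'': since ${\rm End}_{HS}(H^n_{prim})=\mathbb{Q}\,{\rm Id}$ forces $H^n(X,\mathbb{Q})_{prim}$ to contain no nonzero Hodge class (a nonzero Hodge class would give a nontrivial projector), any Hodge endomorphism of $H^n(X,\mathbb{Q})$ preserves the splitting $\mathbb{Q}h^{n/2}\oplus H^n_{prim}$, so $[Z]^*$ already sends $H^n(X,\mathbb{Z})_{prim}$ to itself and $\lambda\in\mathbb{Z}$ immediately. Second, and more seriously, your claim that one can ``arrange separately'' $[C]^*h^{n/2}=h^{n/2}$ is not automatic: with $C=a\,{}^tZ+bW_2$ and $a\lambda+2b=1$ one gets $[C]^*h^{n/2}=(1+a(\nu-\lambda))h^{n/2}$ where $\nu$ is the eigenvalue of $[Z]^*$ on $h^{n/2}$, and there is no reason for $\nu=\lambda$. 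One can fix this by taking $a$ divisible by $3$ (still odd, e.g.\ $a=3$) and then subtracting a suitable multiple of $h_1^{n/2}\cdot h_2^{n/2}$, which shifts the eigenvalue on $h^{n/2}$ by $3$; but this should be said, and it is precisely the ``multiply by $3$'' step that the paper performs more transparently in Lemma~\ref{newlemma27juin}. Finally, the $2$-torsion hypothesis on $H^n(X,\mathbb{Z})/H^n(X,\mathbb{Z})_{alg}$ is \emph{not} used at this integrality step; it enters only when you invoke Theorem~\ref{theoeqchcohdec} (through Proposition~\ref{leGamma}) to pass from the cohomological to the Chow-theoretic decomposition.
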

The assumptions of the Theorem are satisfied by a very general cubic hypersurface, which proves
Theorem \ref{theonewintro} stated in the introduction.

\begin{proof}[Proof of Theorem \ref{theochowinterdim}] Let $L=\mathbb{C}(X)$. Then we have the diagonal point
$\delta_L$ and the fact that it comes from a $0$-cycle supported on
$Y_L$ says,  by taking the Zariski closure in $X\times X$ and using
the localization exact sequence, that there is a decomposition of
the diagonal of $X$ which takes the following form:
$$\Delta_X=Z_1+Z_2\,\,{\rm in}\,\,{\rm CH}^n(X\times X),
$$
where $Z_1$ is supported on $D\times X$ for some proper closed
algebraic subset $D\subset X$, and $Z_2$ is supported on $ X\times
Y$. This decomposition gives in particular a cohomological
decomposition:
\begin{eqnarray}
\label{eqdecompcoaecY}[\Delta_X]=[Z_1]+[Z_2]\,\,{\rm
in}\,\,H^{2n}(X\times X,\mathbb{Z}),
\end{eqnarray}
 where $Z_1$ and $Z_2$ are as above. We now use Lemma
 \ref{newlemma27juin} below which says that
 a decomposition as in (\ref{eqdecompcoaecY}) implies that
 $X$ admits a cohomological decomposition of the diagonal because we
 assumed
 ${\rm End}_{HS}(H^n(X,\mathbb{Q})_{prim})=\mathbb{Q}Id$,
  and Theorem \ref{theoeqchcohdec} which says that $X$ admits then a
  Chow-theoretic decomposition of the diagonal because we assumed that $H^n(X,\mathbb{Z})/H^n(X,\mathbb{Z})_{alg}$
  has no $2$-torsion; hence we proved that
${\rm CH}_0(X)$ is in fact universally trivial.
\end{proof}
\begin{lemm}\label{newlemma27juin} Let $X$ be a smooth cubic
hypersurface. Assume that  there is a decomposition
\begin{eqnarray}
\label{eqdecompcoaecY2}[\Delta_X]=[Z_1]+[Z_2]\,\,{\rm
in}\,\,H^{2n}(X\times X,\mathbb{Z}),
\end{eqnarray}
where $Z_1$ is supported on $D\times X$ , and $Z_2$ is supported on
$ X\times Y$ for some proper closed algebraic subsets $D,\,Y\subset
X$. Then if furthermore ${\rm
End}_{HS}(H^n(X,\mathbb{Q})_{prim})=\mathbb{Q}Id$, $X$ admits a
cohomological decomposition of the diagonal.
\end{lemm}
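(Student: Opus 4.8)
The plan is to deduce, by Corollary~\ref{rema18juill}, the equivalent condition~$(*)$ of Theorem~\ref{theodeccohdutext} for the cubic $X$ (in one of the two cases below we in fact produce a cohomological decomposition of the diagonal directly). I will use repeatedly that for a cubic $H^k(X,\mathbb{Z})$ with $k\neq n$ is spanned by a power of $h=c_1(\mathcal O_X(1))$, hence consists of classes of cycles of positive codimension, and that, by the hypothesis $\mathrm{End}_{HS}(H^n(X,\mathbb{Q})_{prim})=\mathbb{Q}$ together with the semisimplicity of polarized Hodge structures, $H^n(X,\mathbb{Q})_{prim}$ is a \emph{simple} Hodge structure. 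Applying the orthogonal projector $\pi\colon H^n(X,\mathbb{Q})\to H^n(X,\mathbb{Q})_{prim}$ to the identity $\mathrm{Id}=[\Delta_X]^{*}=[Z_1]^{*}+[Z_2]^{*}$ on $H^n(X,\mathbb{Q})$, the endomorphisms $\pi\circ[Z_i]^{*}|_{H^n(X)_{prim}}$ of a simple Hodge structure are scalars $\lambda_i\,\mathrm{Id}$ with $\lambda_1+\lambda_2=1$; since $\pi$ is self-adjoint for $\langle\ ,\ \rangle_X$ and $H^n(X)_{prim}\perp h^{n/2}$, one also gets $\pi\circ[Z_i]_{*}|_{H^n(X)_{prim}}=\lambda_i\,\mathrm{Id}$ (here and below $h^{n/2}$ is understood to be $0$ when $n$ is odd).

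\textit{Case $\lambda_2=0$.} Then $[Z_2]^{*}(H^n(X)_{prim})\subset \mathbb{Q}h^{n/2}$. Writing $Z_2=(\mathrm{id}_X\times j_Y)_{*}Z_2'$ for a resolution $j_Y\colon\widetilde{Y}\to X$ of $Y$, one has $[Z_2]^{*}(h^{n/2})=(Z_2')_{*}(h_{\widetilde Y}^{\,n/2})$, the class of an algebraic cycle of positive codimension on $X$, hence supported on a proper closed subset; so the whole image of $[Z_2]^{*}|_{H^n(X)}$ is supported on a proper closed subset. Since moreover $([Z_2]-[X\times x])^{*}$ vanishes on $H^0(X)$ and $H^{2n}(X)$ and has image inside $H^k(X)=\mathbb{Q}h^{k/2}$ for the remaining $k\neq n$, every K\"unneth component of $[Z_2]-[X\times x]$ has its first-factor span supported on a proper closed subset; thus $[Z_2]-[X\times x]$ is cohomologically supported on $D'\times X$ for some $D'\subsetneq X$, and $[\Delta_X]=[X\times x]+\bigl([Z_1]+[Z_2]-[X\times x]\bigr)$ is a cohomological decomposition of the diagonal.

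\textit{Case $\lambda_2\neq 0$.} Consider the transpose ${}^{t}Z_2\subset Y\times X$ and write ${}^{t}Z_2=(j_Y\times\mathrm{id}_X)_{*}W_0$ with $W_0$ a cycle on $\widetilde{Y}\times X$. Then $[Z_2]_{*}=({}^{t}Z_2)^{*}$ factors as $H^n(X)\xrightarrow{W_0^{*}}H^{2\dim\widetilde Y-n}(\widetilde{Y})\xrightarrow{(j_Y)_{*}}H^n(X)$, with $W_0^{*}$ induced by an algebraic correspondence, and $\pi\circ[Z_2]_{*}|_{H^n(X)_{prim}}=\lambda_2\,\mathrm{Id}$ with $\lambda_2\neq 0$ forces $W_0^{*}|_{H^n(X)_{prim}}$ to be injective. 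Comparing levels (the simple nontrivial Hodge structure $H^n(X)_{prim}$ has level $n-2$, while $H^{2\dim\widetilde Y-n}(\widetilde Y)$ has level $\leq 2\dim\widetilde Y-n$) gives $\dim Y=n-1$, so that $W_0^{*}$ lands in $H^{n-2}(\widetilde Y)$, one step \emph{below} the middle dimension of the $(n-1)$-fold $\widetilde Y$. For a smooth hyperplane section $W\subset\widetilde Y$ weak Lefschetz then gives an injection $j_W^{*}\colon H^{n-2}(\widetilde Y)\hookrightarrow H^{n-2}(W)$, so we obtain a smooth projective $W$ of dimension $n-2$ and an algebraic correspondence $\Gamma_W\in\mathrm{CH}^{n-1}(W\times X)$ with $\Gamma_W^{*}|_{H^n(X)_{prim}}$ injective. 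Since $\Gamma_W^{*}$ is a morphism of Hodge structures and $H^n(X)_{prim}$ is simple, $\langle\Gamma_W^{*}\alpha,\Gamma_W^{*}\beta\rangle_W=\mu\langle\alpha,\beta\rangle_X$ on $H^n(X)_{prim}$ for some $\mu\in\mathbb{Q}$, and $\mu\neq 0$ because the image sub-Hodge-structure of $H^{n-2}(W)$, being nontrivial and transcendental, is nondegenerate for the intersection form. Handling the line $\mathbb{Q}h^{n/2}$ by a rank-one correction correspondence $\xi\times h^{n/2}$ with $\xi=[Z_2]_{*}h^{n/2}$ algebraic, the $[Z_1]$-contribution as in the proof of Theorem~\ref{theodeccohdutext}, and the cohomology in degrees $\neq n$ by powers of $h$, one assembles $\langle\ ,\ \rangle_X$ on $H^n(X)$ in the form required by condition~$(*)$; the finitely many rational coefficients are cleared exactly as in Corollary~\ref{rema18juill}, using that $2\Delta_X$ decomposes since $X$ is unirational of degree $2$.

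The essential difficulty is the second case: one must produce an \emph{algebraic} embedding of $H^n(X)_{prim}$ into the \emph{middle-dimensional} cohomology of a smooth $(n-2)$-fold. The naive route through $H^n(\widetilde Y)$, which lies \emph{above} the middle when $\dim Y=n-1$, would require inverting the Lefschetz operator and is unavailable; instead one routes through the correspondence $W_0$ coming from ${}^{t}Z_2\subset Y\times X$ (or, when $\lambda_1\neq0$, through $Z_1\subset D\times X$), whose action lands one step \emph{below} the middle, where weak Lefschetz for a hyperplane section applies, and the hypothesis $\mathrm{End}_{HS}(H^n(X)_{prim})=\mathbb{Q}$ is precisely what makes the dichotomy $\lambda_2=0$ versus $\lambda_2\neq 0$ exhaustive and provides the required injectivity. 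The verifications that $\mu\neq 0$ and the coefficient bookkeeping are then routine.
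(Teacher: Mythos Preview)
Your approach is genuinely different from the paper's, and considerably more elaborate, but it has a real gap that I do not see how to close as written.

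\medskip

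\textbf{The paper's argument.} The paper does not go through condition~$(*)$ at all. It first observes (as you do) that $[Z_i]^{*}$ acts on $H^n(X,\mathbb{Z})_{prim}$ by an \emph{integer} multiple $m_i$ of the identity, with $m_1+m_2=1$; hence one of them, say $m_1$, is odd. Then $[Z_2]-m_2[\Delta_X]$ annihilates $H^n(X,\mathbb{Q})_{prim}$, so over $\mathbb{Q}$ it is a combination $\sum_i\alpha_i\,h^i\otimes h^{n-i}$; since $\langle h^i,h^{n-i}\rangle_X=3$, the $\alpha_i$ lie in $\tfrac{1}{3}\mathbb{Z}$, and multiplying by $3$ and rearranging gives a decomposition of $(3-3m_2)[\Delta_X]$ with $3-3m_2$ odd. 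Together with the degree-$2$ unirational parametrization this finishes. No $(n-2)$-folds, no weak Lefschetz, no level argument.

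\medskip

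\textbf{The gap in your Case $\lambda_2\neq 0$.} You produce a smooth $(n-2)$-fold $W$ and a correspondence $\Gamma_W$ with
\[
\langle\Gamma_W^{*}\alpha,\Gamma_W^{*}\beta\rangle_{W}=\mu\,\langle\alpha,\beta\rangle_{X}
\quad\text{on }H^n(X)_{prim},
\]
and you correctly argue $\mu\neq 0$. But you give no control on the \emph{parity} of $\mu$. Your $\Gamma_W$ is obtained by restricting $W_0$ to a hyperplane section $W\subset\widetilde Y$, so $\langle\Gamma_W^{*}\alpha,\Gamma_W^{*}\beta\rangle_W=\langle h_{\widetilde Y}\cup W_0^{*}\alpha,\,W_0^{*}\beta\rangle_{\widetilde Y}$; this integer has no obvious relation to $\lambda_2$, and in particular no reason to be odd. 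The sentence ``the finitely many rational coefficients are cleared exactly as in Corollary~\ref{rema18juill}, using that $2\Delta_X$ decomposes'' does not do the job: Corollary~\ref{rema18juill} takes \emph{integral} condition~$(*)$ and returns a decomposition of $3[\Delta_X]$; combining with the decomposition of $2[\Delta_X]$ then yields one for $[\Delta_X]$. If instead you only have $\mu\langle\,,\,\rangle_X$ in the form of~$(*)$, the same mechanism gives a decomposition of $3\mu[\Delta_X]$, and together with $2[\Delta_X]$ you obtain $\gcd(3\mu,2)[\Delta_X]$ --- which is $2[\Delta_X]$, not $[\Delta_X]$, when $\mu$ is even. (Had you instead run the necessity proof of Theorem~\ref{theodeccohdutext} with ${}^tZ_2$ in place of $\Delta_X-X\times x$, you would get the coefficient $\lambda_2^{2}$ rather than $\mu$, and doing this with both $Z_1$ and ${}^tZ_2$ gives coefficients $\lambda_1^{2},\lambda_2^{2}$ which are coprime; that would work, but it is not what you wrote.)

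\medskip

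\textbf{A secondary issue in your Case $\lambda_2=0$.} The inference ``every K\"unneth component of $[Z_2]-[X\times x]$ has its first-factor span supported on a proper closed subset, hence $[Z_2]-[X\times x]$ is cohomologically supported on $D'\times X$'' needs more: for $k>n$ the generator of $H^{k}(X,\mathbb{Z})$ is $\tfrac{1}{3}h^{k/2}$, so an integer multiple of $g_k\otimes g_{2n-k}$ is not a priori the class of an algebraic cycle. This is precisely why the paper multiplies by $3$ before concluding; your argument as stated skips this.

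\medskip

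In short: the decisive point is the parity $m_1+m_2=1\Rightarrow$ one $m_i$ odd, which you never invoke. Without it your ``clearing of coefficients'' does not go through.
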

\begin{proof} Indeed, recall that cubic hypersurfaces admit a
unirational parametrization of degree $2$. So $2[\Delta_X]$ can be
decomposed as $2[X\times x]+[Z]$, where $Z$ is supported on $D\times
X$ for a proper closed algebraic subset $D\subset X$. Hence it
suffices to show that for some odd integer $m$, we
 have a decomposition
$$m[\Delta_X]=m [X\times x]+[Z]\,\,{\rm in}\,\,H^{2n}(X\times X,\mathbb{Z}),$$ where $Z\in {\rm CH}^n(X)$ is supported on $D\times
X$ for some closed algebraic subset $D\subsetneqq X$. Consider the
decomposition (\ref{eqdecompcoaecY2}): each class appearing in this
decomposition acts on $H^n(X,\mathbb{Z})_{prim}$ via a morphism of
Hodge structures, the diagonal acting as identity. As ${\rm
End}_{HS}(H^n(X,\mathbb{Z})_{prim})=\mathbb{Z}Id$, we have
$$[Z_1]^*=m_1 Id,\,[Z_2]^*=m_2 Id,$$
where $m_1,\,m_2$ are two integers such that  $m_1+m_2=1$. We may assume that $m_1$ is odd,  applying
transposition to our cycles if necessary. It follows that $m_2$ is even,
 and  the cycle class $[Z_2]-m_2[\Delta_X]$ acts
trivially on $H^n(X,\mathbb{Q})_{prim}$. Over $\mathbb{Q}$, using
the orthogonal decomposition
$$H^*(X,\mathbb{Q})=H^n(X,\mathbb{Q})_{prim}\oplus
H^*(\mathbb{P}^{n+1},\mathbb{Q})_{\mid X},$$ we conclude that for
some rational numbers $\alpha_i$
\begin{eqnarray}
\label{eqdecompcoaecY3}[Z_2]-m_2[\Delta_X]=\sum_i\alpha_i
h_1^{i}\otimes h_2^{n-i}\,\,{\rm in}\,\,H^{2n}(X\times
X,\mathbb{Q}),
\end{eqnarray}
where here $h\in H^2(X,\mathbb{Z})$ is $c_1(\mathcal{O}_X(1))$ and
the right hand side makes sense in $H^{2n}(X\times X,\mathbb{Q})$
via K\"unneth decomposition. We observe now that, because
$H^*(X,\mathbb{Z})$ has no torsion and the pairings $\langle
h^i,h^{n-i}\rangle$ are equal to $3$, the denominators in the
coefficients $\alpha_i$ of (\ref{eqdecompcoaecY3}) are equal to $3$
(or $1$), so that we get
\begin{eqnarray}
\label{eqdecompcoaecY4}3[Z_2]-3m_2[\Delta_X]=\sum_i\beta_i
pr_1^*h^{i}\smile pr_2^*h^{n-i}\,\,{\rm in}\,\,H^{2n}(X\times
X,\mathbb{Z}),
\end{eqnarray}
where now the $\beta_i$ are integers. Combining
(\ref{eqdecompcoaecY4}) with (\ref{eqdecompcoaecY2}), we get now:
\begin{eqnarray}
\label{eqdecompcoaecY5}3[\Delta_X]=3[Z_1]+3m_2[\Delta_X]+\sum_i\beta_i
pr_1^*h^{i}\smile pr_2^*h^{n-i}\,\,{\rm in}\,\,H^{2n}(X\times
X,\mathbb{Z}),
\end{eqnarray}
where $Z_1$ is  supported on $D\times X$  for some  closed
algebraic subset $D\subsetneqq X$. Hence we proved that
\begin{eqnarray}
\label{eqdecompcoaecY6}(3-3m_2)[\Delta_X]-3\beta_0[X\times
x]=3[Z_1]+\sum_{i>0}\beta_i pr_1^*h^{i}\smile pr_2^*h^{n-i}\,\,{\rm
in}\,\,H^{2n}(X\times X,\mathbb{Z}),
\end{eqnarray}
and clearly, the class $\sum_{i>0}\beta_i pr_1^*h^{i}\smile
pr_2^*h^{n-i}$ is the class of a cycle supported on $D'\times X$,
for some  closed algebraic subset $D'\subsetneqq X$. As $m_2$ is
even, $3-3m_2$ is odd, and thus (\ref{eqdecompcoaecY6}) finishes the
proof.
\end{proof}
In the case of cubic fourfolds, we can replace in Theorem
\ref{theochowinterdim} the assumption that ${\rm
End}_{HS}(H^n(X,\mathbb{Q})_{prim})=\mathbb{Q}Id$ by the following
assumption which concerns the {\it simple} Hodge structure
$$H^4(X,\mathbb{Q})_{tr}:=H^4(X,\mathbb{Q})_{alg}^{\perp},$$
namely
 ${\rm End}_{HS}(H^4(X,\mathbb{Q})_{tr})=\mathbb{Q}Id$. This is
 a weaker and more natural assumption because, when the Hodge structure on
 $H^4(X,\mathbb{Q})_{prim}$ is not simple, or equivalently,
 when
 $H^4(X,\mathbb{Q})_{prim}$ contains nonzero Hodge classes,
 the algebra
 ${\rm
End}_{HS}(H^4(X,\mathbb{Q})_{prim})$ contains projectors associated
to sub-Hodge structures, and this happens along codimension $1$ loci
paramerizing special cubic fourfolds. On the contrary, the
non-existence of nontrivial endomorphisms of the Hodge structure on
$H^4(X,\mathbb{Q})_{tr}$ is satisfied in codimension $1$, and in
particular at the very general point of a Noether-Lefschetz locus by
the following lemma:
\begin{lemm}\label{ledim10} In the moduli space of smooth cubic fourfolds, the set of points
parameterizing cubics $X$ such that ${\rm
End}_{HS}(H^4(X,\mathbb{Q})_{tr})\not=\mathbb{Q}Id$ is of
codimension $\geq2$.
\end{lemm}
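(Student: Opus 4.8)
The plan is to reduce the statement to the Noether--Lefschetz divisors $\mathcal{C}_d$ and then to run, on each of them, a monodromy argument in the spirit of the proof of Lemma \ref{le27juin}. First I would record two structural facts. The Hodge structure $H^4(X,\mathbb{Q})_{tr}$ is always \emph{simple}: in a nontrivial splitting into rational sub-Hodge structures, one summand would be of pure type $(2,2)$ (since $h^{3,1}(H^4(X)_{tr})=1$), hence a space of Hodge classes, so of algebraic classes by the integral Hodge conjecture for cubic fourfolds (\cite{voisinaspects}), and it would therefore lie in $H^4(X,\mathbb{Q})_{alg}\cap H^4(X,\mathbb{Q})_{alg}^{\perp}=0$, the intersection pairing being nondegenerate on primitive cohomology. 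Hence $E_X:={\rm End}_{HS}(H^4(X,\mathbb{Q})_{tr})$ is a division algebra, in fact (by Zarhin's theorem on Hodge structures of K3 type) a field, totally real or CM, over which $H^4(X,\mathbb{Q})_{tr}$ is a vector space. Second, by Lemma \ref{le27juin} with $n=4$, the very general cubic fourfold has ${\rm End}_{HS}(H^4(X,\mathbb{Q})_{prim})=\mathbb{Q}Id$, so $H^4(X,\mathbb{Q})_{prim}$ then carries no nonzero Hodge class, whence $H^4(X,\mathbb{Q})_{tr}=H^4(X,\mathbb{Q})_{prim}$ and $E_X=\mathbb{Q}$. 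Thus the bad locus $B=\{X:\ E_X\neq\mathbb{Q}\}$ is contained in a countable union of proper closed subvarieties of the moduli space $M$, and I would split it according to whether or not $X$ is special in the sense of Hassett.

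If $X\in B$ and $H^4(X,\mathbb{Q})_{prim}$ has no nonzero Hodge class, then $H^4(X,\mathbb{Q})_{tr}=H^4(X,\mathbb{Q})_{prim}$ is simple of K3 type with $E_X\neq\mathbb{Q}$, so $X$ lies in the locus where the primitive Hodge structure acquires real or complex multiplication by some field $F\supsetneq\mathbb{Q}$. Such loci are (components of) Hodge loci; and since the period map for cubic fourfolds is an open immersion onto a $20$-dimensional type-$\mathrm{IV}$ domain (Torelli theorem for cubic fourfolds), whose only codimension-$1$ special subvarieties are the Heegner divisors, i.e. the $\mathcal{C}_d$ (the loci with exactly one extra Hodge class), none of these multiplication loci can be a divisor in $M$. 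Equivalently, a direct dimension count suffices: writing $H^4(X,\mathbb{Q})_{prim}$ (of $\mathbb{Q}$-rank $22$) as an $F$-vector space and computing the dimension of the sub-period-domain of $F$-linear (resp. $F$-semilinear) polarized Hodge structures shows that the multiplication locus has codimension at least $11$ in $M$. As there are only countably many such $F$, this part of $B$ has codimension $\geq 2$.

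It remains to treat $X\in B\cap\mathcal{C}_d$. Each $\mathcal{C}_d$ is irreducible of codimension $1$ in $M$ (\cite{hassett}), so it suffices to prove that the very general member of $\mathcal{C}_d$ has $E_X=\mathbb{Q}$: then $B\cap\mathcal{C}_d$ is a proper (countable union of) closed subvariety of $\mathcal{C}_d$, hence of codimension $\geq 2$ in $M$. Over a dense open $\mathcal{C}_d^{\circ}\subset\mathcal{C}_d$ there is a family of smooth cubic fourfolds $\pi:\mathcal{X}\to\mathcal{C}_d^{\circ}$ with a saturated rank-$2$ sub-local system $K\subset R^4\pi_*\mathbb{Z}$ of algebraic classes; its orthogonal complement $K^{\perp}\subset R^4\pi_*\mathbb{Q}$ is a polarized variation of Hodge structure with fibre $H^4(X,\mathbb{Q})_{tr}$, and the plan is to show that the monodromy representation $\pi_1(\mathcal{C}_d^{\circ})\to O\big(H^4(X,\mathbb{Q})_{tr}\big)$ has Zariski-dense image. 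Granting this, the theorem that the Mumford--Tate group at a very general point of $\mathcal{C}_d^{\circ}$ contains a finite-index subgroup of the monodromy group (\cite{voisinhodgeloci}) forces the Mumford--Tate group of the very general $X\in\mathcal{C}_d$ to be the full orthogonal group of $H^4(X,\mathbb{Q})_{tr}$, which acts absolutely irreducibly; hence every endomorphism of the Hodge structure must be a scalar, i.e. $E_X=\mathbb{Q}$. The hard part is precisely this Zariski-density of the restricted monodromy — the analogue over the subfamily $\mathcal{C}_d$ of the density statement for hypersurfaces used in Lemma \ref{le27juin} (\cite{beauville}). I would derive it from the Torelli theorem for cubic fourfolds together with the description of the period map by Laza and Looijenga, which identify $\mathcal{C}_d$ birationally with a quotient of a $19$-dimensional type-$\mathrm{IV}$ domain by a group commensurable with an arithmetic subgroup of the orthogonal group of the corresponding transcendental lattice, arithmetic subgroups of orthogonal groups being Zariski-dense; alternatively, for those $d$ admitting an associated polarized K3 surface $S$ one has $H^4(X,\mathbb{Q})_{tr}\cong H^2(S,\mathbb{Q})_{tr}(-1)$ and the claim reduces to the big monodromy of the family of polarized K3 surfaces of the relevant degree, that is, to the fact that a very general polarized K3 surface has no nontrivial endomorphism of its transcendental Hodge structure.
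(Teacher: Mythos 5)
Your route is genuinely different from the paper's, and its central step is not actually established. Your reduction is sound in outline: off the Noether--Lefschetz loci, a dimension count for the real/complex multiplication loci (your ``codimension at least $11$'' is slightly off --- the imaginary quadratic CM locus has dimension $10$, hence codimension $10$ --- but only codimension $\geq 2$ is needed, so this is harmless); on each divisor $\mathcal{C}_d$, Zariski-dense monodromy of the restricted family would give full Mumford--Tate group at the very general point and hence ${\rm End}_{HS}(H^4(X,\mathbb{Q})_{tr})=\mathbb{Q}Id$ there. But that density statement is exactly what you flag as ``the hard part'', and the justification offered does not close it: knowing that the period map identifies $\mathcal{C}_d$ birationally with a quotient of a $19$-dimensional type IV domain by an arithmetic group (Laza, Looijenga --- results not quoted in this paper) does not by itself give that the geometric monodromy representation of $\pi_1(\mathcal{C}_d^{\circ})$ on $H^4(X,\mathbb{Q})_{tr}$ has Zariski-dense image. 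One still has to relate the monodromy of the restricted family to the uniformizing arithmetic group: surjectivity of the restricted period map onto an open subset of the Heegner divisor, a finite-index (or at least Zariski-density) statement for the induced map on orbifold fundamental groups, and a treatment of those $d$ with no associated K3 surface, for which your second alternative is unavailable. As written this is a genuine gap, not merely a missing citation. A further small slip: the vanishing $H^4(X,\mathbb{Q})_{alg}\cap H^4(X,\mathbb{Q})_{alg}^{\perp}=0$ requires nondegeneracy of the intersection form on the \emph{algebraic} part (Hodge--Riemann on primitive $(2,2)$ classes together with $\langle h^2,h^2\rangle_X=3$), not just nondegeneracy on primitive cohomology.

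For comparison, the paper's proof avoids monodromy and Mumford--Tate theory altogether and is a half-page dimension count: at a very general point $X$ of a Noether--Lefschetz locus $\mathcal{D}_\sigma$ (or of the moduli space), an endomorphism $h$ of the Hodge structure which is not a homothety remains a Hodge endomorphism under deformation inside $\mathcal{D}_\sigma$; it acts on the line $H^{3,1}(X)$ by an eigenvalue $\lambda$ which is irrational by simplicity of $H^4(X,\mathbb{Q})_{tr}$, so the eigenspace $H_\lambda$ has complex dimension at most $\frac{1}{2}{\rm dim}\,H^4(X,\mathbb{Q})_{tr}$; since the period points of all deformations parameterized by $\mathcal{D}_\sigma$ lie in $\mathbb{P}(H_\lambda)$ and the period map is injective, one gets $19={\rm dim}\,\mathcal{D}_\sigma\leq\frac{1}{2}({\rm dim}\,H^4(X,\mathbb{Q})_{tr}-2)\leq 19/2$, a contradiction. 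This eigenvalue argument accomplishes precisely what your monodromy step was meant to do, with only Torelli-type input that you are already invoking; so either complete the big-monodromy claim over every $\mathcal{C}_d$ in detail, or replace it by this dimension count.
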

\begin{proof} Equivalently, we have to show that this set does not
contain the very general point of a Noether-Lefschetz locus
$\mathcal{D}_\sigma$ defined by a class $\sigma$, or the very
general point of the moduli space. By contradiction, let $X$ be  a
very general point of $\mathcal{D}_\sigma$, and let $h\in {\rm
End}_{HS}(H^4(X,\mathbb{Q})_{tr})$ be a morphism of Hodge structures
(which then has to remain a morphism of Hodge structures acting on
$H^4(X_t,\mathbb{Q})_{tr}$ for any small deformation $X_t$ of $X$
parameterized by a point $t\in \mathcal{D}_\sigma$) but is not a
homothety. Let $\lambda\in \mathbb{C}$ be the algebraic number such
that $h^*\eta_X=\lambda \eta_X$, where $\eta_X$ is a generator of
the rank $1$ vector space $H^{3,1}(X)$. As the Hodge structure on
$H^4(X,\mathbb{Q})_{tr}$ is simple and $h$ is not a homothety,
$\lambda$ is not a rational number. It follows that the eigenspace
$H_\lambda$ of $h$ associated with the eigenvalue $\lambda$ has
complex dimension $\leq \frac{1}{2}{\rm
dim}\,H^4(X,\mathbb{Q})_{tr}$. On the other hand, the period map
restricted to $\mathcal{D}_\sigma$ has by assumption its image
contained in $\mathbb{P}(H_\lambda)$. As the period map is
injective, we conclude that $${\rm dim}\,\mathcal{D}_\sigma\leq
\frac{1}{2}({\rm dim}\,H^4(X,\mathbb{Q})_{tr}-2)$$ which is absurd since
the right-hand side is equal to $19/2$ while the left-hand side is
equal to $19$. This contradicts our assumption that $h$ is not a
homothety. The same argument works if $X$ is a general point of the
moduli space.
\end{proof}
Our next result is the following:

\begin{theo} \label{theochowinterdimvariant}  Let $X$ be a smooth cubic fourfold such that
${\rm End}_{HS}(H^4(X,\mathbb{Q})_{tr})=\mathbb{Q}Id$. Assume there
is a proper closed algebraic subset $Y\subset X$ such that ${\rm
CH}_0(Y)\rightarrow {\rm CH}_0(X)$ is universally surjective. Then ${\rm CH}_0(X)$ is universally
trivial.
\end{theo}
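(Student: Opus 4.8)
The plan is to prove a variant of Lemma \ref{newlemma27juin} in which the hypothesis on all of $H^4(X,\mathbb{Q})_{prim}$ is weakened to the hypothesis on $H^4(X,\mathbb{Q})_{tr}$, exploiting that the integral Hodge conjecture holds for cubic fourfolds (\cite{voisinaspects}), that $H^*(X,\mathbb{Z})$ is torsion free, and that $H^6(X,\mathbb{Z})$ is generated by the class of a line. Once $X$ is shown to admit a cohomological decomposition of the diagonal, Theorem \ref{theoeqchcohdec} (applicable since $\dim X=4$) upgrades it to a Chow-theoretic one, which is the assertion.

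First I would run the opening reduction in the proof of Theorem \ref{theochowinterdim} verbatim: with $L=\mathbb{C}(X)$, the universal surjectivity of ${\rm CH}_0(Y)\to{\rm CH}_0(X)$ produces, via the Zariski closure of the diagonal point and the localization sequence, a decomposition $\Delta_X=Z_1+Z_2$ in ${\rm CH}^4(X\times X)$ with $Z_1$ supported on $D\times X$ ($D\subsetneq X$) and $Z_2$ supported on $X\times Y$, hence $[\Delta_X]=[Z_1]+[Z_2]$ in $H^8(X\times X,\mathbb{Z})$. Next I analyse the induced maps on $H^4(X)$: both $[Z_1]^*$ and $[Z_2]^*$ are morphisms of Hodge structures, and since $H^4(X,\mathbb{Q})_{tr}$ is simple (by hypothesis) with $H^{3,1}\neq 0$ while $H^4(X,\mathbb{Q})_{alg}$ is pure of type $(2,2)$, every morphism of Hodge structures of $H^4(X)$ carries $H^4(X,\mathbb{Q})_{tr}$ into itself; therefore it acts there as a scalar, and, preserving the integral transcendental lattice, as an integer. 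So $[Z_1]^*|_{tr}=m_1\,{\rm Id}$ and $[Z_2]^*|_{tr}=m_2\,{\rm Id}$ with $m_1,m_2\in\mathbb{Z}$ and $m_1+m_2=1$.

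Now set $\xi=[Z_2]-m_2[\Delta_X]$. Since $[Z_2]^*$ acts as $m_2\,{\rm Id}$ on $H^4(X,\mathbb{Q})_{tr}$, so does its transpose (adjointness for the cup form, which is symmetric and nondegenerate on $H^4(X,\mathbb{Q})_{tr}$); hence $\xi^*$ and $\xi_*$ both vanish on $H^4(X,\mathbb{Q})_{tr}$, which, in the integral Künneth decomposition $H^8(X\times X,\mathbb{Z})=\bigoplus_{a+b=8}H^a(X,\mathbb{Z})\otimes H^b(X,\mathbb{Z})$ (valid as $H^*(X,\mathbb{Z})$ is torsion free), forces the $(4,4)$-component of $\xi$ to lie in $H^4(X,\mathbb{Q})_{alg}\otimes H^4(X,\mathbb{Q})_{alg}$, hence — $H^4(X,\mathbb{Z})_{alg}$ being the saturated lattice of integral Hodge classes — in $H^4(X,\mathbb{Z})_{alg}\otimes H^4(X,\mathbb{Z})_{alg}$; by the integral Hodge conjecture this is a $\mathbb{Z}$-combination of classes $[S_k\times S_l]$ of products of surfaces $S_k\subsetneq X$. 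The components of $\xi$ in bidegrees $(6,2),(2,6),(8,0)$ are integral multiples of $[\ell]\otimes h$, $h\otimes[\ell]$, $[x]\otimes 1$ (here one uses $H^6(X,\mathbb{Z})=\mathbb{Z}[\ell]$), which are classes of cycles $\ell'\times V$, $V\times\ell'$, $\{x\}\times X$ with $\ell'$ a line and $V$ a hyperplane section; all of these, together with $Z_1$ and the surface products above, are supported on (proper closed)$\times X$. The only remaining component of $\xi$, in bidegree $(0,8)$, is a multiple of $1\otimes[x]=[X\times x]$. Combining with $[\Delta_X]=[Z_1]+[Z_2]$ therefore gives $m_1[\Delta_X]=c[X\times x]+[W_1]$ with $W_1$ supported on (proper closed)$\times X$, and restricting the induced maps to $H^8(X)$ — where $[\Delta_X]^*=[X\times x]^*={\rm Id}$ while $[W_1]^*=0$ (as $W_1$ is supported on $D_1\times X$ with $\dim D_1<4$) — forces $c=m_1$. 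Applying the same argument to the transposed decomposition $[\Delta_X]=[{}^tZ_1]+[{}^tZ_2]$, whose ${}^tZ_2$ is now supported on $Y\times X$, yields likewise $m_2[\Delta_X]=m_2[X\times x]+[W_2]$ with $W_2$ so supported; adding these two identities and using $m_1+m_2=1$ gives $[\Delta_X]=[X\times x]+[W_1+W_2]$, a cohomological decomposition of the diagonal of $X$. Theorem \ref{theoeqchcohdec} then gives a Chow-theoretic decomposition, i.e. ${\rm CH}_0(X)$ is universally trivial.

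I expect the heart of the matter to be the step passing from "$\xi^*$ and $\xi_*$ annihilate $H^4(X,\mathbb{Q})_{tr}$" to the concrete conclusion that the $(4,4)$-Künneth component of $\xi$ is represented, with integral coefficients, by a cycle supported on (proper closed)$\times X$: this is exactly where the weaker hypothesis on the transcendental Hodge structure must be combined with the simplicity of $H^4(X,\mathbb{Q})_{tr}$, the $(2,2)$-purity of $H^4(X,\mathbb{Q})_{alg}$, and the integral Hodge conjecture for cubic fourfolds, and where one must check that torsion-freeness of $H^*(X,\mathbb{Z})$, saturation of $H^4(X,\mathbb{Z})_{alg}$ and $H^6(X,\mathbb{Z})=\mathbb{Z}[\ell]$ keep the whole argument integral — so that, in contrast with Lemma \ref{newlemma27juin}, neither the auxiliary factor $3$ nor the degree $2$ unirational parametrization is needed, the two decompositions already summing to the diagonal.
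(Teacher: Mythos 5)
Your proposal is correct, and its core is the same as the paper's: reduce universal surjectivity to a cohomological identity $[\Delta_X]=[Z_1]+[Z_2]$ with $Z_1$ supported on $D\times X$ and $Z_2$ on $X\times Y$, use ${\rm End}_{HS}(H^4(X,\mathbb{Q})_{tr})=\mathbb{Q}\,Id$ to make $[Z_1]^*,[Z_2]^*$ act on the (saturated) lattice $H^4(X,\mathbb{Z})_{tr}$ as integers $m_1,m_2$ with $m_1+m_2=1$, and then use the integral K\"unneth decomposition, the algebraicity of $H^{*\neq 4}(X,\mathbb{Z})$ and the integral Hodge conjecture for cubic fourfolds to show that the correction term is an integral combination of product classes, all but $[X\times x]$ being supported on (proper closed)$\times X$; Theorem \ref{theoeqchcohdec} then upgrades the cohomological decomposition to a Chow-theoretic one. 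Where you genuinely diverge is the endgame: the paper argues that one of $m_1,m_2$ is even, deduces a decomposition only of the odd multiple $(1-2m)[\Delta_X]$, and then (as in Lemma \ref{newlemma27juin}) combines it with the decomposition of $2[\Delta_X]$ coming from the degree $2$ unirational parametrization; you instead run the elimination twice, once on $[Z_2]$ and once on $[{}^t Z_1]$ in the transposed identity, obtaining $m_1[\Delta_X]=m_1[X\times x]+[W_1]$ and $m_2[\Delta_X]=m_2[X\times x]+[W_2]$ with both $W_i$ supported on (proper closed)$\times X$, and simply add, using $m_1+m_2=1$. This is a clean simplification: no parity discussion and no appeal to the unirational parametrization at this stage (it is of course still used inside the cited Theorem \ref{theoeqchcohdec}), at the modest price of having to check that both $\xi^*$ and $\xi_*$ kill $H^4_{tr}$ (which you do, via self-adjointness of scalars) and that the saturation of ${Hdg}^4(X,\mathbb{Z})$ gives the integral statement $(A_{\mathbb{Q}}\otimes A_{\mathbb{Q}})\cap (H^4_{\mathbb{Z}}\otimes H^4_{\mathbb{Z}})=A_{\mathbb{Z}}\otimes A_{\mathbb{Z}}$, both of which are fine. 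One harmless slip: with the paper's convention $\Gamma^*\alpha=pr_{1*}(pr_2^*\alpha\smile\Gamma)$, the class $[X\times x]^*$ acts as $Id$ on $H^0(X)$, not on $H^8(X)$ (where it acts as $0$), so the normalization $c=m_1$ should be read off on $H^0$ (or via $(\cdot)_*$ on $H^8$); the conclusion is unaffected.
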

\begin{proof} The proof is very similar to the previous proof. The assumption is that
$$[\Delta_X]=[Z_1]+[Z_2]\,\,{\rm in}\,\,H^{8}(X\times X,\mathbb{Z}),$$
with $Z_1$ supported on $D_1\times X$, $Z_2$ supported on $X\times
D_2$ for some closed proper algebraic subsets $D_1,\,D_2\subsetneqq
X$. We consider the action of $[Z_i]^*$ on $H^4(X,\mathbb{Z})_{tr}$.
As ${\rm End}_{HS}(H^4(X,\mathbb{Q})_{tr})=\mathbb{Q}Id$, each of
them must act as a multiple of the identity and the sum
$[Z_1]^*+[Z_2]^*$ equals $Id_{H^4(X,\mathbb{Z})_{tr}}$. So one of
them, say $[Z_1]^*$, must act as an odd multiple of the identity and
the other as an even multiple of the identity. Let $[Z_2]^*=2m\,Id$
on $H^4(X,\mathbb{Z})_{tr}$. Then $(2m[\Delta_X]-[Z_2])^*$ acts as
$0$ on $H^4(X,\mathbb{Z})_{tr}$. Note also that
$(2m[\Delta_X]-[Z_2])^*$ maps
${Hdg}^4(X,\mathbb{Z})=H^*(X,\mathbb{Z})_{tr}^{\perp}$ to itself,
and this implies that
$$2m[\Delta_X]-[Z_2]\in {Hdg}^4(X,\mathbb{Z})\otimes {Hdg}^4(X,\mathbb{Z})\oplus \sum_{i\not= 2}H^{2i}(X,\mathbb{Z})
\otimes H^{8-2i}(X,\mathbb{Z})\subset  H^8(X\times X,\mathbb{Z}).$$
All classes in ${Hdg}^4(X,\mathbb{Z})$  are algebraic by the Hodge
conjecture for integral Hodge classes on cubic fourfolds proved in
\cite{voisinaspects} and all classes in $H^{*\not=4}(X,\mathbb{Z})$
are algebraic. Hence we conclude that
$$ 2m[\Delta_X]-[Z_2]= \sum_ipr_1^*[W_i]\smile pr_2^* [W'_i]$$
for some integral cycles $W_i,\,W'_i$ on $X$ satisfying ${\rm
dim}\,W_i+{\rm dim}\,W'_i=4$. The rest of the proof works as before,
allowing to conclude that $X$ admits a cohomological decomposition
of the diagonal, hence also a Chow-theoretic one by Theorem
\ref{theoeqchcohdec}.
\end{proof}
We finally prove the following Theorem \ref{thecbufin}.
 Let $X$ be a cubic fourfold. Assume $X$ is  special in the sense of Hassett, that is
$H^4(X,\mathbb{Z})$ contains two independent Hodge classes (one
being the class $h^2$, the other being denoted $\sigma$). Let
$P\subset H^4(X,\mathbb{Z})$ be the sublattice generated by these
two Hodge classes. The restriction of the intersection form $\langle\,
,\,\rangle_X$ to $P$ has a discriminant $D(\sigma)$. This number,
which is always even, has been very much studied in conjunction with
rationality properties of cubic fourfolds (see \cite{hassett},
\cite{hassettthschinkel}, \cite{kuznetsov}, \cite{addi}). Hassett's
work suggested  that if a cubic fourfold is rational, then it is
special and the discriminant $D(\sigma)$ satisfies severe
restrictions.

\begin{theo}\label{thecbufin} If $4$ does not divide $D(\sigma)$, the ${\rm CH}_0$ group of $X$ is universally
trivial (that is, $X$ admits a Chow-theoretic decomposition of the
diagonal).
\end{theo}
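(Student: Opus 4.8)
The plan is to show, using the hypothesis $4\nmid D(\sigma)$, that the ${\rm CH}_0$ group of $X$ is universally supported on a surface, and then to invoke Theorem \ref{theochowinterdimvariant}.

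Since $X$ is a cubic fourfold, the integral Hodge conjecture for degree $4$ classes on $X$ holds by \cite{voisinaspects}, so the Hodge class $\sigma$ is algebraic; replacing $\sigma$ by $\sigma+kh^2$ (which changes neither $P$ nor $D(\sigma)$) and passing to a suitable multiple if necessary, I may assume $\sigma$ is the class of an effective surface $T\subset X$, which for the generic member of the Noether--Lefschetz divisor $\mathcal D_{D(\sigma)}$ is the surface cut out by the special structure and which persists, as a flat limit (possibly reducible or non-reduced), on every member. Let $\eta\in {\rm CH}_0(X)$ be the canonical degree $3$ cycle $[\ell\cap X]$ attached to a line $\ell\subset\mathbb P^5$ not contained in $X$; since lines form a rational family, $\eta$ does not depend on $\ell$, and choosing $\ell$ inside a $\mathbb P^3$ it is represented by a zero-cycle on the cubic surface $S_0:=X\cap\mathbb P^3$, which is rational.

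The decisive geometric observation is that if $\ell$ is a secant line of $T$ meeting $T$ at two points $p,q$ and $\ell\not\subset X$, then $\ell\cap X=\{p,q,x\}$ for a third point $x$ and $[x]=\eta-[p]-[q]$ in ${\rm CH}_0(X)$. Consequently, if through a general point of $X$ there passes a secant of $T$ not contained in $X$, then --- taking Zariski closures of this construction over $\mathbb C(X)$, as in the proof of Theorem \ref{theochowinterdim} --- one obtains a decomposition $\Delta_X=Z_1+Z_2$ in ${\rm CH}^4(X\times X)$ with $Z_1$ supported on $D\times X$, $D\subsetneq X$, and $Z_2$ supported on $X\times Y$ for the proper closed surface $Y:=T\cup S_0$; equivalently, ${\rm CH}_0(Y)\to{\rm CH}_0(X)$ is universally surjective, the points $p,q$ accounting for the image of ${\rm CH}_0(T)$ and the cycle $\eta$ for the image of ${\rm CH}_0(S_0)$. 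This is exactly where I expect $4\nmid D(\sigma)$ to be used: the secant property can fail only if every secant line of $T$ lies in $X$, i.e. $\mathrm{Sec}(T)\subseteq X$, and since $\mathrm{Sec}(T)$ is swept by lines and of dimension $\geq 3$ this forces $\mathrm{Sec}(T)=X$; by Severi's classification of secant-defective surfaces in $\mathbb P^5$ (together with the remark that a surface spanning only a hyperplane has class proportional to $h^2$, contradicting independence of $\sigma$ and $h^2$) this can happen only for essentially the Veronese surface or a cone, and a direct computation of the Gram matrix $\bigl(\begin{smallmatrix} 3 & \langle h^2,\sigma\rangle\\ \langle h^2,\sigma\rangle & \langle\sigma,\sigma\rangle\end{smallmatrix}\bigr)$ shows that such surfaces force $4\mid D(\sigma)$ (the Veronese giving $D(\sigma)=20$). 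Hence $4\nmid D(\sigma)$ provides, for a suitable choice of representative of $\sigma$, a surface $T$ with $\mathrm{Sec}(T)=\mathbb P^5\supsetneq X$, so that a secant of $T$ through a general point of $X$ is not contained in $X$.

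Granting the universal surjectivity of ${\rm CH}_0(Y)\to{\rm CH}_0(X)$, Theorem \ref{theochowinterdimvariant} gives the universal triviality of ${\rm CH}_0(X)$ as soon as ${\rm End}_{HS}(H^4(X,\mathbb Q)_{tr})=\mathbb Q\,\mathrm{Id}$. For the remaining cubics --- those for which $H^4(X,\mathbb Q)_{tr}$ is not simple, which by Lemma \ref{ledim10} forces $X$ to be special of a further discriminant and to lie in a locus of codimension $\geq 2$ in moduli --- one repeats the argument of Lemma \ref{newlemma27juin} on each simple factor of $H^4(X,\mathbb Q)_{tr}$: the decomposition $[\Delta_X]=[Z_1]+[Z_2]$ forces $[Z_1]^*+[Z_2]^*=\mathrm{Id}$ on each factor, the $S_0$-part of $Z_2$ acting trivially on transcendental cohomology because $S_0$ is rational, and the parity needed to combine with the degree $2$ unirational parametrization of $X$ comes for free. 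The principal obstacle is the lattice-and-classification step of the previous paragraph: one must identify precisely which (possibly singular or reducible) surfaces can represent $\sigma$, check that every secant-degenerate one forces $4\mid D(\sigma)$, and confirm that whenever $4\nmid D(\sigma)$ the class $\sigma$ admits a non-degenerate representative; this is the only place where the arithmetic hypothesis really enters, everything else being either standard Bloch--Srinivas closure arguments or already supplied by the earlier sections.
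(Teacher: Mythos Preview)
Your argument has a genuine gap at the key step: dominance of the secant map $\Phi'_{\mid T\times T}:T\times T\dashrightarrow X$ does \emph{not} by itself give universal surjectivity of ${\rm CH}_0(T\cup S_0)\to{\rm CH}_0(X)$. The relation $[x]=\eta-[p]-[q]$ holds at closed points, but over $L=\mathbb{C}(X)$ the generic point $\delta_L$ need not admit a secant line of $T$ defined over $L$; the fibre of the factored map $\psi:T^{(2)}\dashrightarrow X$ over $\delta_L$ is only a degree-$N$ zero-cycle on $T^{(2)}_L$, so all you obtain is $N\delta_L\in{\rm Im}\,j_*$ modulo multiples of $\eta$. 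Combined with the degree-$2$ unirational parametrisation this yields $\gcd(N,2)\delta_L$, which is $\delta_L$ only when $N$ is odd. Your ``Zariski closure'' step silently assumes a rational section of $\psi$, which does not exist in general.

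This is exactly where $4\nmid D(\sigma)$ enters in the paper, and it is not via Severi's classification of secant-defective surfaces. The paper chooses $\Sigma$ smooth and in general position representing $\sigma$ (after adding a multiple of $h^2$), and computes the degree $2N$ of $\Phi'_{\mid\Sigma\times\Sigma}$ by a double-point count for the linear projection $\pi_x:\Sigma\to\mathbb{P}^4$: comparing the two normal-bundle sequences for $\Sigma\subset X$ and $\Sigma\to\mathbb{P}^4$ and invoking Noether's formula gives $2N\equiv D(\sigma)\pmod 4$, hence $N$ is odd precisely when $4\nmid D(\sigma)$. Your Severi argument, even if it could be made rigorous, would at best ensure dominance, which as explained is insufficient; indeed your reasoning would apply to any non-degenerate surface in $X$ regardless of discriminant. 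Finally, the paper handles cubics with ${\rm End}_{HS}(H^4(X,\mathbb{Q})_{tr})\neq\mathbb{Q}\,{\rm Id}$ not by a factor-by-factor analysis but by a specialisation argument: the decomposition property is closed in families, so proving it at the very general point of $\mathcal{D}_\sigma$ suffices.
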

\begin{proof}   The assumption on $X$
being satisfied along a countable union of Noether-Lefschetz type
divisors $\mathcal{D}_\sigma$ in the moduli space of cubic fourfolds
 (see \cite{hassett}), it suffices to show that the conclusion
holds for $X$ very general in each $\mathcal{D}_\sigma$. Indeed,
inside each  divisor $\mathcal{D}_\sigma$, the existence of a
cohomological (or Chow-theoretic) decomposition of the diagonal is
satisfied  along a countable union of closed algebraic subsets (see
\cite[Proof of Theorem 1.1]{voisindoublesolid}). So if it satisfied
at the very general point of $\mathcal{D}_\sigma$, it is satisfied
everywhere along $\mathcal{D}_\sigma$. Next, by Lemma \ref{ledim10},
 for a very general  point $X$ in a Noether-Lefschetz
  divisor, we have
${\rm End}_{HS}(H^4(X,\mathbb{Q})_{tr})=\mathbb{Q}Id$. Theorem
\ref{theochowinterdimvariant} thus tells us that if there is a surface
$\Sigma\subset X$ such that ${\rm CH}_0(\Sigma)\rightarrow {\rm CH}_0(X)$ is universally surjective,
then ${\rm CH}_0(X)$ is universally trivial.
The existence of a special Hodge class $\sigma$ provides us with an algebraic cycle
$Z$ on $X$ of class $\sigma$, by the Hodge conjecture for integral Hodge classes proved in
\cite{voisinaspects}.
Adding to $\sigma$ a high multiple of $h^2$, we can even assume that $Z$ is the class
of a smooth surface $\Sigma$ in general position. Indeed, as we are working with codimension $2$ cycles,
their classes are generated by $c_2$ of vector bundles on $X$. For any vector bundle
$E$ of rank $r$, a twist of $E$ is very ample and its $c_2$ is represented by a rank locus
associated to a morphism $\phi: \mathcal{O}^{r-1}\rightarrow E$, and this rank locus is smooth
in codimension $5$.  Let now
$\Sigma$  be as above. Recall the rational map
$$\Phi':X^2\dashrightarrow X,\,\,\Phi'(x,x')=x'',\,\,x+x'+x''=\langle x,x'\rangle\cap X.$$

We now have
\begin{lemm}\label{lesurfdeg}  Assume the restriction of $\Phi'$ to $\Sigma^2$ is dominant of
 degree $2N$ not divisible by $4$. Then the map ${\rm CH}_0(\Sigma)\rightarrow {\rm CH}_0(X)$ is universally surjective.
\end{lemm}
\begin{proof} Indeed, the rational  map $\Phi'_{\mid \Sigma\times \Sigma}$ is symmetric, that is
factors through a rational map $\psi:\Sigma^{(2)}\dashrightarrow X$,
and our assumption implies that the factored map $\psi$ has odd
degree $N$. It follows that $\psi: {\rm
CH}_0(\Sigma^{(2)}_L)\rightarrow {\rm CH}_0(X_L)$ is surjective for
any field $L$ containing $\mathbb{C}$ because its cokernel is
annihilated by $N$, with $N$ odd,  and also by $2$, since $X$ is a
cubic of dimension $\geq2$, hence admits a unirational
parametrization of degree $2$. On the other hand, if $z\in {\rm
CH}_0(\Sigma^{(2)}_L)$ is of degree $k$, $z$ provides a $0$-cycle
$z'$ on $\Sigma_L$ of degree $2k$, and we obviously have
$$\psi_*(z)=k\, h^4-j_*(z')\,\,{\rm in}\,\,{\rm CH}_0(X_L),$$
where $j:\Sigma\rightarrow X$ is the inclusion map. It follows that
the map $j_*:{\rm CH}_0(\Sigma_L)\rightarrow {\rm CH}_0(X_L)$ is
surjective as well, since the class $h^4 \in {\rm CH}_0(X_L)$
belongs to the image of $j_*$. Indeed, it belongs to the image of
${\rm CH}_0(X_\mathbb{C})=\mathbb{Z}x_0 \rightarrow{\rm CH}_0(X_L)$
for any point $x_0\in X(\mathbb{C})$, which one can take in
$\Sigma(\mathbb{C})$.
\end{proof}
The next lemma relates the degree of $\Phi'_{\mid \Sigma\times
\Sigma}$ to the discriminant $D(\sigma)$.
\begin{lemm}\label{propcalculdegre} Let $\Sigma\subset X$ be a smooth surface in general position. Then
the degree of the rational  map $\Phi'_{\mid \Sigma\times
\Sigma}:\Sigma\times \Sigma\dashrightarrow X$ is congruent to
$D(\sigma)$ modulo $4$, where $\sigma=[\Sigma]$.
\end{lemm}
\begin{proof} Let $x\in X$ be a general point of $X$ and let
$$\pi_x: X\dashrightarrow \mathbb{P}^4$$
be the linear projection from $x$. To say that $(z,z')\in \Sigma^2$
satisfies $\Phi'(z,z')=x$ is equivalent to say that $z,z'$ and $x$
are collinear, or that $\pi_x(z)=\pi_x(z')$. As $\Sigma$ is in
general position,the restriction of  $\pi_x$ to $\Sigma$ maps
$\Sigma$ to a surface $\Sigma'$ which is smooth apart from finitely
many double points corresponding to pairs $\{z,z'\}$ as above. It
follows that the degree of $\Phi'_{\mid \Sigma\times \Sigma}$ is
equal to twice the number $N$ of these double points (this argument
appears in \cite[7.2]{hassettthschinkel}). We now compare the
geometry of the two immersions
$$\Sigma\subset X,\,\pi_{x,\Sigma}:=\pi_{x\mid \Sigma}:\Sigma\rightarrow \mathbb{P}^4.$$
The two corresponding normal bundle exact sequences give
\begin{eqnarray}
\label{eqnormalbundle}
0\rightarrow T_\Sigma\rightarrow T_{X\mid \Sigma} \rightarrow N_{\Sigma/X}\rightarrow 0,
\\
\nonumber
0\rightarrow T_\Sigma\rightarrow \pi_{x,\Sigma}^*T_{\mathbb{P}^4} \rightarrow N_{\Sigma/\mathbb{P}^4}\rightarrow 0,
\end{eqnarray}
which by Whitney formula provides
\begin{eqnarray}
\label{eqnormalbundlewhitney}
c_2(T_\Sigma)=c_2( T_{X\mid \Sigma})-c_2(  N_{\Sigma/X})+K_\Sigma\cdot c_1( N_{\Sigma/X}),
\\
\nonumber
c_2( T_\Sigma)=c_2(\pi_{x,\Sigma}^*T_{\mathbb{P}^4})-c_2(N_{\Sigma/\mathbb{P}^4})+K_\Sigma\cdot
c_1( N_{\Sigma/\mathbb{P}^4}).
\end{eqnarray}
We now use the equalities
$$c_2(T_{X\mid \Sigma})=6h^2_{\mid \Sigma},\,c_2(\pi_{x,\Sigma}^* T_{\mathbb{P}^4})=10h^2_{\mid \Sigma},$$
$$c_1( N_{\Sigma/X})=K_\Sigma+3h_{\mid \Sigma},\,c_1( N_{\Sigma/\mathbb{P}^4})=K_\Sigma+5h_{\mid \Sigma}$$
together with
$$c_2(  N_{\Sigma/X})=\sigma^2,\,\sigma=[\Sigma]\in H^4(X,\mathbb{Z}),\,$$
$$ c_2(  N_{\Sigma/\mathbb{P}^4})=(h^2_{\mid \Sigma})^2-2N=(\sigma\cdot h^2)^2-2N.$$
Thus (\ref{eqnormalbundlewhitney}) becomes
\begin{eqnarray}
\label{eqnormalbundlewhitney2}
c_2( T_\Sigma)=6h^2\cdot\sigma-\sigma^2+K_\Sigma\cdot (K_\Sigma+3h),
\\
\nonumber
c_2( T_\Sigma)=10h^2\cdot\sigma-(\sigma\cdot h^2)^2+2N+K_\Sigma\cdot (K_\Sigma+5h).
\end{eqnarray}
We now add these two equalities and consider the result modulo $4$, which gives
\begin{eqnarray}
\label{eqnormalbundlewhitney3}
2c_2( T_\Sigma)=-\sigma^2-(\sigma\cdot h^2)^2+2N+2K_\Sigma^2 \,\,{\rm mod}\,\,4.
\end{eqnarray}
As $2c_2( T_\Sigma)-2K_\Sigma^2$ is divisible by $4$ by Noether's formula,
we conclude that
\begin{eqnarray}
\label{eqnormalbundlewhitney4}
\sigma^2+(\sigma\cdot h^2)^2=2N \,\,{\rm mod}\,\,4.
\end{eqnarray}
As $D(\sigma)=3\sigma^2-(\sigma\cdot h^2)^2$ is equal to $-\sigma^2-(\sigma\cdot h^2)^2$ modulo $4$,
we proved that
$2N=-D(\sigma)=D(\sigma)\,\,{\rm mod}\,\,4$.
\end{proof}
Combining Lemmas \ref{propcalculdegre} and \ref{lesurfdeg}, we conclude that
the map ${\rm CH}_0(\Sigma)\rightarrow {\rm CH}_0(X)$ is universally surjective, hence that
${\rm CH}_0(X)$ is universally trivial.
\end{proof}
\begin{rema}{\rm If one looks at the proof of the integral Hodge
conjecture for cubic fourfolds given in \cite{voisinaspects}, one
easily sees that it gives more, namely: the group of Hodge classes
of degree $4$ on a cubic fourfold is generated by classes of
rational surfaces. Thus the surface $\Sigma$ above can be chosen
rational. However, Lemma \ref{propcalculdegre} does not allow us to
conclude that if $D(\sigma)$ is not divisible by $4$, $X$ admits a
unirational parametrization of odd degree. Indeed, the rational
surface produced by the construction of \cite{voisinaspects} will be
presumably singular, and not in general position.}
\end{rema}

4  place Jussieu, 75252 Paris Cedex 05, France

\smallskip
 claire.voisin@imj-prg.fr
    \end{document}